\newtheorem{theorem}{Theorem}
\newtheorem{lemma}{Lemma}
\newtheorem{definition}{Definition}
\newtheorem{assumption}{Assumption}
\newtheorem{proposition}{Proposition}
\newtheorem{remark}{Remark}
\newenvironment{proof}[1][Proof]{%
  \par\noindent\textit{#1.}\ }{\hfill$\square$\par}
\begin{document}

\begin{frontmatter}

\title{Distributionally Robust Control with End-to-End Statistically Guaranteed Metric Learning\thanksref{footnoteinfo}} 


\thanks[footnoteinfo]{This paper was not presented at any IFAC meeting.}

\author[SJTU]{Jingyi Wu}\ead{Icymiracle@sjtu.edu.cn},    
\author[SJTU]{Chao Ning}\ead{chao.ning@sjtu.edu.cn},  
\author[Victoria]{Yang Shi}\ead{yshi@uvic.ca}


\address[SJTU]{School of Automation and Intelligent Sensing, Shanghai Jiao Tong University, PR China}  

\address[Victoria]{Department of Mechanical Engineering, University of Victoria, Victoria, Canada}       

\begin{abstract}                          
Wasserstein distributionally robust control (DRC) recently emerges as a principled paradigm for handling uncertainty in stochastic dynamical systems. However, it constructs data-driven ambiguity sets via uniform distribution shifts before sequentially incorporating them into downstream control synthesis. This segregation between ambiguity set construction and control objectives inherently introduces a structural misalignment, which undesirably leads to conservative control policies with suboptimal performance. To address this limitation, we propose a novel end-to-end finite-horizon Wasserstein DRC framework that integrates the learning of anisotropic Wasserstein metrics with downstream control tasks in a closed-loop manner, thus enabling ambiguity sets to be systematically adjusted along performance-critical directions and yielding more effective control policies. This framework is formulated as a bilevel program: the inner level characterizes dynamical system evolution under DRC, while the outer level refines the anisotropic metric leveraging control-performance feedback across a range of initial conditions. To solve this program efficiently, we develop a stochastic augmented Lagrangian algorithm tailored to the bilevel structure. Theoretically, we prove that the learned ambiguity sets preserve statistical finite-sample guarantees under a novel radius adjustment mechanism, and we establish the well-posedness of the bilevel formulation by demonstrating its continuity with respect to the learnable metric. Furthermore, we show that the algorithm converges to stationary points of the outer-level problem, which are statistically consistent with the optimal metric at a non-asymptotic convergence rate. Experiments on both numerical and inventory control tasks verify that the proposed framework achieves superior closed-loop performance and robustness compared against state-of-the-art methods. 
\end{abstract}

\begin{keyword}                           
End-to-end; Distributionally robust control; Anisotropic Wasserstein ambiguity set; Metric learning.               
\end{keyword}                            

\end{frontmatter}

\section{Introduction}
Stochastic control has been widely applied in areas such as robotics, energy systems, and finance to enable decision-making under uncertainty \cite{AE-stochastic-control-1,AE-stochastic-control-2,AE-stochastic-control-3}.
Its theoretical foundation typically assumes that the probability distribution governing the uncertainty is fully known. 
However, in practice, this distribution is rarely accessible, and controllers must be designed based on approximate information inferred from limited data. Since such approximations are inherently imperfect, discrepancies between the estimated and true distributions inevitably result in substantial degradation in control performance \cite{background-1, background-2}.


To address this limitation of classical stochastic control under distributional mismatch, distributionally robust control (DRC) has emerged as a powerful paradigm to safeguard control performance against uncertainties in the underlying data distribution \cite{AE-DRC-7,DRC-MaTAC-1,DRC-MaTAC-2}. In this paradigm, the controller optimizes for the worst-case expected cost over a set of plausible distributions, known as an ambiguity set. Prior studies formulate the ambiguity set using moment information \cite{moment-DRC-1,DRC-MaTAC-3}, such as the empirical mean and covariance \cite{AE-DRC-4, AE-DRC-5}, leading to tractable convex formulations in linear-quadratic settings \cite{DRC-moment,moment-DRC-2}. More recently, the use of the Wasserstein metric has gained increasing popularity in DRC for defining data-driven ambiguity sets centered around an empirical distribution \cite{Wasserstein-DRC-1, AE-DRC-1,AE-DRC-2,DRC-MaTAC-4}. Wasserstein DRC offers strong finite-sample guarantees, asymptotic consistency, and typically admits computationally tractable reformulations \cite{Wasserstein-DRO-1,AE-DRC-3, AE-DRC-6}, thus making it well-suited for modern control applications. Owing to these appealing properties, Wasserstein DRC has been successfully applied in a wide array of settings, including model predictive control (MPC) \cite{DRC-application-1,DRC-MaTAC-5,DRC-MaTAC-6}, reinforcement learning \cite{DRC-application-2,DRC-MaTAC-7}, and motion control \cite{DRC-application-3}, where robustness to unseen or shifting distributions is crucial. Nevertheless, most existing DRC methods follow a sequential treatment of uncertainty modeling and control synthesis, where ambiguity sets are specified a priori solely from statistical information without leveraging any feedback information from ultimate control tasks. Such task-agnostic ambiguity set constructions in the open-loop manner tend to induce excessive conservatism in the resulting control policies.



Moving beyond sequential control formulations, end-to-end control has recently attracted growing interest for its ability to optimize controllers with respect to control-task-level objectives \cite{wang,E2e-control-nominal-1}. Existing studies can be broadly categorized into nominal and robust settings. In the nominal case, where uncertainty is not considered, end-to-end control methods can refine baseline controllers and thereby significantly improve control performance \cite{E2e-control-nominal-3,E2e-control-nominal-2}. Nonetheless, the resulting controllers inevitably exhibit sensitivity to model mismatch, distributional shifts, and external disturbances due to their heavy reliance on nominal assumptions. To mitigate such vulnerabilities, robust end-to-end control approaches incorporate additional safety mechanisms such as ancillary supervisory controllers and contraction-metric-based constraints to enforce safety under uncertainties \cite{E2e-control-robust-1,e2e-robust-3}. While these mechanisms enhance reliability considerably, they adversely constrain achievable performance because of overly rigid safeguards, indicating the inherent tension between reliability and efficiency in current end-to-end control techniques.


Despite recent advances in both DRC and end-to-end control, several fundamental research gaps still persist. 
For Wasserstein-based DRC, the main gap lies in the segregation between ambiguity set construction and downstream control synthesis. By expanding the empirical distribution uniformly, conventional DRC methods treat perturbations in all directions as equally relevant, overlooking their non-uniform impact on control performance. Such decision-agnostic ambiguity sets foster overly conservative control strategies.
For end-to-end control, the main limitation stems from its lack of generalization. Specifically, since existing end-to-end control methods are typically trained from a single initial condition, they are prone to overfitting, which in turn restricts their practical applicability and generalization beyond the training initial condition.


To fill these research gaps, there are several challenges to overcome. 
Firstly, bridging the separation between ambiguity set design and control performance motivates the integration of Wasserstein-based DRC with end-to-end learning. The central challenge is how to adjust the Wasserstein metric based on control performance feedback while still preserving statistical finite-sample guarantees which are exceedingly essential for control safety.
Secondly, there is a significant challenge in overcoming the sensitivity of existing end-to-end controllers to initial system-state conditions. This requires a novel methodology that can account for variations in initial conditions and fine-tune the Wasserstein metric to prevent overfitting.
Finally, the resulting end-to-end metric learning problem is inherently stochastic and nonsmooth, arising from the combination of DRC and trajectory-level performance objectives. This nonsmoothness poses a significant computational challenge because standard gradient-based optimization methods are inapplicable. Developing tractable and theoretically sound algorithms for solving the learning problem remains a key barrier to practical deployment.


In this paper, we propose a novel end-to-end Wasserstein DRC framework for linear dynamical systems with unknown additive disturbances over a finite horizon, which seamlessly integrates ambiguity set construction with control objectives. In contrast to conventional task-agnostic ambiguity sets that shift distributions isotropically, we introduce a learnable anisotropic Wasserstein metric whose geometry intelligently adapts to task-relevant directions using the feedback from downstream control performance. This control-oriented metric learning is cast as a bilevel stochastic optimization problem, where the inner level captures dynamical system evolution under DRC, and the outer level automatically learns the anisotropic metric by minimizing closed-loop control cost across a region of initial states. Thanks to the end-to-end metric learning and the bilevel structure, the resulting DRC enjoys the advantages of being less conservative and more robust to variations in initial conditions.
To make the bilevel program computationally tractable, we reformulate the inner DRC problem  equivalently as a convex program and develop a tailored stochastic augmented Lagrangian algorithm that alternately refines the control policies and anisotropic Wasserstein metric.
As part of the theoretical foundation, we propose a geometry-aware radius adjustment mechanism, by which the statistical finite-sample guarantee is theoretically established. This guarantee ensures that the proposed control-oriented ambiguity set contains the underlying true distribution with a high probability given a finite number of data. Beyond statistical validity, we show the continuity of the proposed framework in the learnable metric to ensure the well-definedness of the end-to-end formulation. Leveraging this regularity, we further prove that the proposed algorithm converges to stationary outer-level solutions that are statistically consistent with the optimal metric at a non-asymptotic rate.
Finally, experiments on a numerical example and a classical inventory control problem validate that our framework achieves superior closed-loop control performance compared to conventional Wasserstein DRC methods and demonstrates better generalization over existing end-to-end control techniques.

The major contributions of this paper are summarized as follows:
\begin{itemize}
    \item We propose a novel end-to-end Wasserstein DRC framework, to the best of our knowledge, the first of its kind to couple ambiguity set design with control in a feedback manner via bilevel optimization and to generalize across diverse initial conditions.
    \item We put forward a control-task-focused anisotropic Wasserstein ambiguity set and propose a radius adjustment mechanism to theoretically establish the statistical finite-sample guarantee.
    \item We provide a rigorous theoretical foundation by proving continuity in the anisotropy metric, establishing convergence of the proposed algorithm, and deriving non-asymptotic statistical consistency of the learned metric.
\end{itemize}

\textbf{Notation: } 
Throughout the paper, $\mathbb{S}_{++}^d$ denotes the set of $d$-dimensional symmetric positive definite matrices. The notation $\|\cdot\|$ is used for the Euclidean norm of a vector, and for a bounded set $C\subset\mathbb{R}^n$ we define $\|C\| := \sup_{x\in C} \|x\|$. The distance from a point $x$ to a set $C$ is $d(x,C) := \inf_{x’\in C} \|x - x’\|$. For two nonempty sets $C_1, C_2 \subset\mathbb{R}^n$, the deviation from $C_1$ to $C_2$ is $\mathbb{D}(C_1,C_2) := \sup_{x\in C_1} d(x, C_2)$, and the Hausdorff distance is $\mathbb{H}(C_1,C_2) := \max\left\{ \mathbb{D}(C_1,C_2), \mathbb{D}(C_2,C_1) \right\}$. The projection of a point $x$ onto a closed convex set $\mathcal{C}$ is denoted by $\Pi_{\mathcal{C}}(x) := \arg\min_{y\in \mathcal{C}} \|x - y\|$. For a locally Lipschitz continuous function $f:\mathbb{R}^n\to\mathbb{R}^m$, the Clarke subdifferential at $x$ is defined as $\partial^c f(x) := \mathrm{conv} \left\{ \lim_{y \to x,\, y\in D_f} \nabla f(y) \right\}$, where $D_f$ is the set of differentiability points of $f$. 
For a set $S\subset\mathbb{R}^n$, $\mathrm{conv}(S)$ denotes its convex hull, and for a closed convex set $\mathcal{C}$, the (Euclidean) normal cone at $x\in\mathcal{C}$ is $\mathcal{N}_{\mathcal{C}}(x) := \{ v \in \mathbb{R}^n \mid \langle v, y - x \rangle \le 0,\ \forall y \in \mathcal{C} \}$. We define $[x]_+=\max\{x,0\}$. We use $\sigma_{max}(A)$ to denote the maximal eigenvalue of the matrix $A$.

\section{Problem Formulation}
\subsection{System dynamics and constraints}
Consider a linear system with additive disturbances, given by the following dynamics,
\begin{equation}
    x_{t+1}=Ax_t + Bu_t + w_t, \label{system}
\end{equation}
where $x_t\in\mathbb{R}^{n_x},u_t\in\mathbb{R}^{n_u}$ and $w_t\in\mathbb{R}^{n_x}$ are the system state, input and uncertain disturbance, respectively. The system is subject to the constraint below.
\begin{equation}
\label{eq: system constraint}
    F_x^\top x_t+F_u^\top u_t+f\leq 0 .
\end{equation}
In this paper, we assume the disturbances $w_t$ are i.i.d with a potentially unbounded support set $\mathcal{W}$.
Over a finite horizon $T$, the system dynamics can be compactly written as follows.
$$\mathbf{y}=L\mathbf{z}+\boldsymbol{\xi}$$
where
\begin{equation}
    \begin{gathered}
    \mathbf{y}=\begin{bmatrix}x_1\\x_2\\\vdots\\x_T\end{bmatrix},\mathbf{u}=\begin{bmatrix}u_0\\u_1\\\vdots\\u_{T-1}\end{bmatrix},\mathbf{w}=\begin{bmatrix}w_0\\w_1\\\vdots\\w_{T-1}\end{bmatrix},\\
    \mathbf{z}=\begin{bmatrix}x_0\\\mathbf{u}\end{bmatrix},\boldsymbol{\xi}={H}\mathbf{w}
    \end{gathered}
\end{equation}
and 
\begin{equation}
    \begin{gathered}
    L=\begin{bmatrix}A&B&0_{n\times m}&\cdots&0_{n\times m}\\A^{2}&AB&B&\ddots&\vdots\\\vdots&\vdots&\ddots&\ddots&0_{n\times m}\\A^{T}&A^{T-1}B&\cdots&AB&B\end{bmatrix},
    \\
    H=\begin{bmatrix}I_{n\times n}&0_{n\times n}&\cdots&0_{n\times n}\\A&I_{n\times n}&\ddots&\vdots\\\vdots&\ddots&\ddots&0_{n\times n}\\A^{T-1}&\cdots&A&I_{n\times n}\end{bmatrix}\end{gathered}
\end{equation}
Since predicted trajectories $\mathbf{y}$ depend on the initial condition, the input sequence, and the multi-step disturbance, it is essential to appropriately model the joint distribution of $\mathbf{w} = \begin{bmatrix}w_0 &w_1 &\cdots &w_{T-1}\end{bmatrix}^\top$. 
Building on this foundation, the next step is to leverage the available data to infer the distribution of $\mathbf{w}$ and then construct a receding horizon distributionally robust controller that effectively handles the uncertainties over the horizon.
\subsection{Conventional Wasserstein distributionally robust controller}
We assume access to a dataset $\mathcal{D}=\{\widehat{\mathbf{z}}^i,\widehat{\mathbf{y}}^i\}_{i=1}^N$ which contains $N$ input-state trajectories of length $T$.  From this dataset, we can extract $N$ samples of disturbance sequences by computing as $\widehat{\mathbf{w}}^i= \widehat{\mathbf{y}}^i-L\widehat{\mathbf{z}}^i$ and then construct the empirical distribution $\widehat{\mathbb{P}}_N:=\frac{1}{N}\sum_{i=1}^N\delta_{\widehat{\mathbf{w}}^i}$, where $\delta_{\widehat{\mathbf{w}}^i}$ denotes the Dirac distribution at the samples. To hedge against the uncertainty in the empirical disturbance distribution, a Wasserstein ambiguity set centered at $\widehat{\mathbb{P}}$ with radius $\varepsilon$ can be constructed below.
\begin{equation}
    \mathbb{B}_{\varepsilon}(\widehat{\mathbb{P}})=\{\mathbb{Q}:d_{W_p}(\widehat{\mathbb{P}},\mathbb{Q})\leq \varepsilon\}
    \label{eq: Wasserstein ball}
\end{equation}
where the distance between distributions is measured by the following $p$ -Wasserstein metric,
\begin{equation*}
    d_{W_p}(\widehat{\mathbb{P}},\mathbb{Q}) = \left(\inf_{\pi\in\mathcal{P}(\Xi^2)}\left\{
    \begin{gathered}
            \mathbb{E}_{(\tilde{z},z)\sim\pi}\left[\|\tilde{z}-z\|^p\right]:\pi\text{ has} \\ \text{marginal distributions }\widehat{\mathbb{P}},\mathbb{Q}
    \end{gathered}
    \right\}\right)^{\frac{1}{p}}.
\end{equation*}
Note that the conventional Wasserstein metric $d_{W_p}$ is isotropic. As a consequence, the Wasserstein ball in \eqref{eq: Wasserstein ball} treats distribution shifts in all directions uniformly and remains agnostic to downstream control objectives.
Within this distributionally robust framework, a disturbance feedback control policy which is parameterized as an affine function of past disturbances is considered to mitigate the conservatism. Specifically, the control input at each time step $i$ is given by
\begin{equation}
    u_i=\sum_{j=0}^{i-1}M_{i,j}w_j+v_i,\forall i\in\{0,\cdots,T-1\}
    \label{disturbance feedback}
\end{equation}
where $M_{i,j}\subseteq \mathbb{R}^{n_u\times n_x}$ and $v_i\subseteq \mathbb{R}^{n_u}$ are decision variables to be optimized. This causal policy can be written as the following compact form, 
\begin{equation}
    \mathbf{u}=\mathbf{Mw}+\mathbf{v},
\end{equation} 
where $\mathbf{M} \in \mathbb{R}^{n_u T \times n_x T}$ is a lower block-triangular matrix formed by $M_{i,j}$, and $\mathbf{v} \in \mathbb{R}^{n_u T}$ concatenates the affine terms $v_i$ as follows.
\begin{equation}
    \begin{aligned}
        \mathbf{M}&:=\begin{bmatrix}0&\cdots&\cdots&0\\M_{1,0}&0&\cdots&0\\\vdots&\ddots&\ddots&\vdots\\M_{T-1,0}&\cdots&M_{T-1,T-2}&0\end{bmatrix} \\
        \mathbf{v}&:=\mathrm{vec}(v_0,\ldots,v_{N-1})
    \end{aligned}
\end{equation}

Based on the preceding setup, the conventional Wasserstein DRC adopts a sequential pipeline, where the control-agnostic Wasserstein ball \eqref{eq: Wasserstein ball} is first constructed and then incorporated into the resulting controller \cite{conventional-controller}. Under this formulation, the DRC problem can be expressed as the following program.
\begin{equation}
    \begin{aligned}\min_{\mathbf{v},\mathbf{M}}\quad &\sup_{\mathbb{Q}\in\mathbb{B}_{\varepsilon}\left(\hat{\mathbb{P}}\right)}\mathbb{E}_{\mathbf{w}\sim\mathbb{Q}}\left[h\left(\mathbf{y},\mathbf{z}\right)\right]\\
    \mathrm{s.t.}\quad&[\mathbf{M},\mathbf{v}]\in\mathcal{Z}_{\tau}\\&\sup_{\mathbb{Q}\in\mathbb{B}_{\varepsilon}\left(\hat{\mathbb{P}}\right)}\mathrm{CVaR}_{1-\eta}^{\mathbf{w}\sim\mathbb{Q}}\left(g(\mathbf{y},\mathbf{z})\right)\leq0\end{aligned}
    \label{DR-MPC}
\end{equation}
where $h: \mathbb{R}^{n_x T} \to \mathbb{R}$ denotes the objective function, and $g(\mathbf{y},\mathbf{z})$ is defined as the maximum value of the system constraints \eqref{eq: system constraint} over all time steps, that is, $g(\mathbf{y},\mathbf{z}) = \max_{1 \le t \le T} (F_x^\top x_t + F_u^\top u_t + f)$. Imposing the Conditional Value-at-Risk (CVaR) constraint  ensures that the system constraint $F_x^\top x_t + F_u^\top u_t + f \le 0$ is satisfied in a risk-aware manner at every time step $t = 1, \dots, T$ under disturbances. 
Additionally, the set $\mathcal{Z}_r$ contains any additional structural or design constraints imposed on the decision variables.
\begin{definition}[Conditional Value-at-Risk]
    For a random variable $Z$ with distribution $\mathbb{P}$ and confidence level $\eta\in(0,1)$, the CVaR at level $1-\eta$ is defined as
    \begin{equation}
        \mathrm{CVaR}_{1-\eta}^{Z\sim \mathbb{P}}(Z) := \inf_{\alpha \in \mathbb{R}} \left\{ \alpha + \frac{1}{\eta} \mathbb{E}\big[ (Z - \alpha)_+ \big] \right\}.
        \label{CVaR definition}
    \end{equation}
    \label{Def:CVaR}
\end{definition}
While the conventional formulation described above provides a principled way to hedge against distributional shifts, its ambiguity set is designed in an open-loop manner as an isotropic Wasserstein ball centered at the empirical distribution, which shifts distributions uniformly in all directions. This construction is independent of the downstream control task and ignores the important fact that distributional deviations along different directions can have varying impacts on control performance.
This insight motivates a key question: Can the ambiguity set be designed in a control-aware manner, adapting its geometry to better serve the downstream control problem? 
To this end, we propose an end-to-end Wasserstein DRC framework that learns an anisotropic Wasserstein ambiguity set leveraging the control-performance feedback, thereby seamlessly integrating ambiguity set design with controller synthesis.

\section{The Proposed Metric Learning Method}

This section introduces the definition of a new Wasserstein ambiguity set that allows for anisotropic distributional shifts, along with statistical guarantees it satisfies. We also derive a tractable convex reformulation of the resulting DRC problem under this novel ambiguity set. The metric that defines the geometry of the ambiguity set is treated as a learnable parameter in our end-to-end framework.

\subsection{Anisotropic Wasserstein ambiguity set with statistical finite-sample guarantee}

To effectively model non-uniform distributional shifts across different directions, we propose a more expressive construction, which incorporates a weighting matrix $\Lambda$ to capture anisotropy. This matrix leads to a generalized metric $\| \tilde{z} - z \|_{\Lambda} := \| \Lambda (\tilde{z} - z) \|$,
which provides directional flexibility beyond the conventional uniform treatment. Building on this construction, we define the anisotropic Wasserstein distance as follows.

\begin{definition}[Anisotropic Wasserstein Distance]
Let $\Lambda \in \mathbb{S}_{++}^d$ be a positive definite matrix. The anisotropic Wasserstein distance between two probability measures $\mathbb{P}$ and $\mathbb{Q}$ supported on $\mathcal{Z} \subseteq \mathbb{R}^d$ is defined as
    \begin{equation}
            d_{W_p}^\Lambda(\mathbb{P},\mathbb{Q}) = \left(\inf_{\pi\in\mathcal{P}(\mathcal{Z}^2)}\left\{
    \begin{gathered}
            \mathbb{E}_{(\tilde{z},z)\sim\pi}\left[\|\tilde{z}-z\|^p_\Lambda\right]:\pi\text{ has} \\ \text{marginal distributions }{\mathbb{P}},\mathbb{Q}
    \end{gathered}
    \right\}\right)^{\frac{1}{p}}
    \end{equation}
where $\|z\|_{\Lambda} := \|\Lambda z\|$ denotes the weighted norm induced by $\Lambda$.
\end{definition}

In contrast to the conventional Wasserstein metric, which is independent of the control objective, the proposed anisotropic metric is capable of being adjusted through end-to-end learning, where $\Lambda$ is tuned using the feedback from downstream control performance. This favorable flexibility provides the potential to construct ambiguity sets that are not only data-driven but also more closely aligned with ultimate control objectives.  Additionally, the proposed anisotropic Wasserstein metric does not depart entirely from the conventional definition. In fact, the two are related through the following proposition.

\begin{proposition}
\label{prop: distance equivalence}
Let $\Lambda \in \mathbb{S}_{++}^d$ be a positive definite matrix. Then the anisotropic Wasserstein distance between $\mathbb{P}$ and $\mathbb{Q}$ satisfies
\begin{equation}
\begin{aligned}
    d_{W_p}^\Lambda(\mathbb{P}, \mathbb{Q}) =d_{W_p}(\Lambda_{\#} \mathbb{P}, \Lambda_{\#} \mathbb{Q}),
\end{aligned}
\end{equation}
where $\Lambda_{\#} \mathbb{P}$ denotes the pushforward measure of $\mathbb{P}$ under the linear map $z \mapsto \Lambda z$.
\end{proposition}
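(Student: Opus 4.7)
The plan is to prove the identity via a pushforward change of variables in the Kantorovich (coupling) formulation, exploiting the fact that $\Lambda \in \mathbb{S}_{++}^d$ is invertible. The key observation to anchor the argument is the algebraic identity $\|\tilde z - z\|_{\Lambda}^p = \|\Lambda \tilde z - \Lambda z\|^p$, which is immediate from the definition $\|\cdot\|_{\Lambda} = \|\Lambda \cdot\|$. Once this is noted, the entire content of the proposition reduces to showing that the minimization over couplings of $(\mathbb{P},\mathbb{Q})$ is in bijection with the minimization over couplings of $(\Lambda_{\#}\mathbb{P}, \Lambda_{\#}\mathbb{Q})$, and that this bijection preserves the transport cost.

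First, I would define the map $\Phi:\mathcal{Z}^2 \to \mathcal{Z}^2$ by $\Phi(\tilde z, z) := (\Lambda \tilde z, \Lambda z)$; since $\Lambda$ is positive definite, $\Phi$ is a linear bijection with inverse $\Phi^{-1}(\tilde z', z') = (\Lambda^{-1}\tilde z', \Lambda^{-1}z')$. Given any coupling $\pi \in \mathcal{P}(\mathcal{Z}^2)$ with marginals $\mathbb{P}$ and $\mathbb{Q}$, I would set $\pi' := \Phi_{\#}\pi$. A direct marginal computation shows the first marginal of $\pi'$ is $\Lambda_{\#}\mathbb{P}$ and the second is $\Lambda_{\#}\mathbb{Q}$, so $\pi' \in \Pi(\Lambda_{\#}\mathbb{P}, \Lambda_{\#}\mathbb{Q})$. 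Conversely, given any $\pi' \in \Pi(\Lambda_{\#}\mathbb{P}, \Lambda_{\#}\mathbb{Q})$, the pushforward $(\Phi^{-1})_{\#}\pi'$ lies in $\Pi(\mathbb{P},\mathbb{Q})$, which establishes that $\pi \mapsto \Phi_{\#}\pi$ is a bijection between the two coupling sets.

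Next, I would apply the change-of-variables formula for pushforward measures to the cost. For any $\pi \in \Pi(\mathbb{P},\mathbb{Q})$,
\begin{equation*}
\mathbb{E}_{(\tilde z, z)\sim \pi}\bigl[\|\tilde z - z\|_{\Lambda}^p\bigr]
= \mathbb{E}_{(\tilde z, z)\sim \pi}\bigl[\|\Lambda \tilde z - \Lambda z\|^p\bigr]
= \mathbb{E}_{(\tilde z', z')\sim \Phi_{\#}\pi}\bigl[\|\tilde z' - z'\|^p\bigr].
\end{equation*}
Hence the cost-functional value at $\pi$ agrees with its value at the corresponding $\pi' = \Phi_{\#}\pi$. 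Taking infima over the two (bijectively related) coupling sets and raising to the $1/p$ power yields $d_{W_p}^{\Lambda}(\mathbb{P},\mathbb{Q}) = d_{W_p}(\Lambda_{\#}\mathbb{P}, \Lambda_{\#}\mathbb{Q})$, which is the claim.

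I do not expect any serious obstacle here: the result is essentially a bookkeeping exercise about pushforwards. The only point requiring a bit of care is verifying that $\Phi_{\#}$ really is a bijection between $\Pi(\mathbb{P},\mathbb{Q})$ and $\Pi(\Lambda_{\#}\mathbb{P}, \Lambda_{\#}\mathbb{Q})$, which relies crucially on the invertibility of $\Lambda$ guaranteed by $\Lambda \in \mathbb{S}_{++}^d$; if $\Lambda$ were only positive semidefinite this argument would break down and only an inequality would be available. A minor measurability check (that $\Phi$ is continuous, hence Borel-measurable, so the pushforward is well-defined) completes the verification.
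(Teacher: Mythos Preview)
Your proof is correct and follows essentially the same pushforward change-of-variables argument as the paper's, which condenses the reasoning into a four-line chain of equalities without spelling out the bijection between coupling sets. Your version is in fact more careful, since you explicitly verify that $\pi \mapsto \Phi_{\#}\pi$ is a bijection between $\Pi(\mathbb{P},\mathbb{Q})$ and $\Pi(\Lambda_{\#}\mathbb{P},\Lambda_{\#}\mathbb{Q})$ and note that this relies on the invertibility of $\Lambda$---a point the paper leaves implicit.
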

\begin{proof}
With the definition of pushforward measure, we can obtain that
    \begin{equation}
\begin{aligned}
    d_{W_p}^\Lambda(\mathbb{P}, \mathbb{Q})^p 
    &= \inf_{\pi \in \mathcal{P}(\mathcal{Z}^2)} 
    \mathbb{E}_{(\tilde{z}, z) \sim \pi} 
    \left[ \| \tilde{z} - z \|_{\Lambda}^p \right] \\
    &= \inf_{\pi \in \mathcal{P}(\mathcal{Z}^2)} 
    \mathbb{E}_{(\tilde{z}, z) \sim \pi} 
    \left[ \| \Lambda (\tilde{z} - z) \|^p \right] \\
    &= \inf_{\pi \in \Pi(\Lambda_{\#} \mathbb{P}, \Lambda_{\#} \mathbb{Q})} 
    \mathbb{E}_{(\tilde{z}, z) \sim \pi} 
    \left[ \| \tilde{z} - z \|^p \right] \\
    &= d_{W_p}(\Lambda_{\#} \mathbb{P}, \Lambda_{\#} \mathbb{Q})^p.
\end{aligned}
\end{equation}
\end{proof}

Based on the proposed anisotropic Wasserstein metric $d_{W_p}^\Lambda$, we can formulate the corresponding anisotropic ambiguity set as follows.
\begin{equation}
   \mathbb{B}_{\varepsilon(\Lambda)}^\Lambda(\widehat{\mathbb{P}}_N):=\left\{\mathbb{P}: d_{W_p}^\Lambda(\mathbb{P},\widehat{\mathbb{P}}_N)\leq \varepsilon(\Lambda)\right\},
\end{equation}
where $\varepsilon(\Lambda)$ denotes the radius.

Building on the relationship between the anisotropic and conventional Wasserstein distances established in Proposition \ref{prop: distance equivalence}, we prove that the proposed anisotropic Wasserstein ambiguity set \eqref{anistropic wasserstein ball} admits the statistical finite-sample guarantee. Specifically, a radius adjustment mechanism is introduced to incorporate the geometry induced by $\Lambda$, ensuring that the set remains statistically valid in the anisotropic setting.

\begin{assumption}[Light-tail assumption]
    For some constants $a>1$ and $b>0$, the true distribution $\mathbb{P}$ satisfies that,
    $$\mathcal{E}_{a,b}=\mathbb{E}\left[e^{b\|\boldsymbol{\xi}\|^a}\right]<+\infty.
    $$
    \label{light tail}
\end{assumption}
Assumption~\ref{light tail} is a standard light-tail condition commonly imposed in the Wasserstein DRO literature \cite{Wasserstein-DRO-1}. Leveraging this condition, we establish in Theorem~\ref{thm: finite-sample guarantee} the statistical finite-sample guarantees for anisotropic Wasserstein ambiguity sets.

\begin{theorem}[Statistical finite-sample guarantee]
Suppose Assumption \ref{light tail} holds. Let $\widehat{\mathbb{P}}_N$ denote the empirical distribution constructed from $N$ i.i.d. samples drawn from the true distribution $\mathbb{P}$. Under Assumption \ref{light tail}, the Wasserstein ball $\mathbb{B}_\varepsilon(\widehat{\mathbb{P}}_N)$ satisfies the following concentration inequality:
\begin{equation}
    \begin{aligned}
    &\mathbb{P}^N\left\{ \mathbb{P}\in \mathbb{B}_\varepsilon({\widehat{\mathbb{P}}_N}) \right\} \\
    &\geq\begin{cases}
1-c_1 \exp\left( -c_2 N \varepsilon^{\max\{m,2\}} \right), & \text{if } \varepsilon \leq 1, \\
1-c_1 \exp\left( -c_2 N \varepsilon^a \right), & \text{if } \varepsilon > 1.
\end{cases}
\end{aligned}
\label{finite-sample guarantee}
\end{equation}
for some constants $c_1, c_2 > 0$, where $m$ is the dimension of the support.
Then, under the anisotropic Wasserstein distance $d_{\mathrm{W}}^\Lambda$, the ambiguity set $\mathbb{B}_\varepsilon^\Lambda(\widehat{\mathbb{P}}_N)$ with a rescaled radius $\varepsilon(\Lambda) = \sigma_{\max}(\Lambda)\varepsilon$ 
\begin{equation}
    \label{anistropic wasserstein ball}\mathbb{B}_{\varepsilon(\Lambda)}^\Lambda(\widehat{\mathbb{P}}_N):=\left\{\mathbb{P}: d_{W_p}^\Lambda(\mathbb{P},\widehat{\mathbb{P}}_N)\leq \sigma_{\max}(\Lambda)\varepsilon\right\}
\end{equation}
satisfies the same finite-sample guarantee below.
\begin{equation}
    \begin{aligned}
    &\mathbb{P}^N\left\{ \mathbb{P}\in \mathbb{B}_\varepsilon^\Lambda(\widehat{\mathbb{P}}_N)) \right\} \\
    &\geq\begin{cases}
1-c_1 \exp\left( -c_2 N \varepsilon^{\max\{m,2\}} \right), & \text{if } \varepsilon \leq 1, \\
1-c_1 \exp\left( -c_2 N \varepsilon^a \right), & \text{if } \varepsilon > 1.
\end{cases}
\end{aligned}
\label{finite-sample guarantee for anistropic ball}
\end{equation}

\label{thm: finite-sample guarantee}
\end{theorem}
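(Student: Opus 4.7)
The overall plan is to reduce the anisotropic guarantee to the isotropic one by a simple coupling/domination argument. The anchor is Proposition~\ref{prop: distance equivalence}, which lets us read the $\Lambda$-weighted Wasserstein distance as an ordinary Wasserstein distance between pushforward measures, together with the operator-norm bound $\|\Lambda x\| \le \sigma_{\max}(\Lambda)\|x\|$.

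First, I would establish the domination inequality
\begin{equation*}
d_{W_p}^{\Lambda}(\mathbb{P},\mathbb{Q}) \;\le\; \sigma_{\max}(\Lambda)\, d_{W_p}(\mathbb{P},\mathbb{Q})
\end{equation*}
for arbitrary probability measures $\mathbb{P},\mathbb{Q}$ on $\mathcal{W}^T$. This follows directly from the definition: for any coupling $\pi$ with marginals $\mathbb{P},\mathbb{Q}$,
\begin{equation*}
\mathbb{E}_{(\tilde z,z)\sim \pi}\bigl[\|\tilde z - z\|_{\Lambda}^p\bigr]
= \mathbb{E}_{\pi}\bigl[\|\Lambda(\tilde z - z)\|^p\bigr]
\le \sigma_{\max}(\Lambda)^p\,\mathbb{E}_{\pi}\bigl[\|\tilde z - z\|^p\bigr],
\end{equation*}
and taking the infimum over $\pi$ on both sides followed by the $p$-th root yields the claim. (Equivalently, one can note that the pushforward of $\pi$ by $(\Lambda,\Lambda)$ is a coupling of $\Lambda_{\#}\mathbb{P}$ and $\Lambda_{\#}\mathbb{Q}$, then invoke Proposition~\ref{prop: distance equivalence}.)

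Second, I would use this inequality to obtain the set inclusion
\begin{equation*}
\mathbb{B}_{\varepsilon}(\widehat{\mathbb{P}}_N) \;\subseteq\; \mathbb{B}_{\sigma_{\max}(\Lambda)\varepsilon}^{\Lambda}(\widehat{\mathbb{P}}_N),
\end{equation*}
since any $\mathbb{Q}$ with $d_{W_p}(\mathbb{Q},\widehat{\mathbb{P}}_N)\le\varepsilon$ automatically satisfies $d_{W_p}^{\Lambda}(\mathbb{Q},\widehat{\mathbb{P}}_N)\le\sigma_{\max}(\Lambda)\varepsilon$. Consequently, the event $\{\mathbb{P}\in\mathbb{B}_{\varepsilon}(\widehat{\mathbb{P}}_N)\}$ is contained in $\{\mathbb{P}\in\mathbb{B}_{\sigma_{\max}(\Lambda)\varepsilon}^{\Lambda}(\widehat{\mathbb{P}}_N)\}$, so monotonicity of $\mathbb{P}^N$ gives
\begin{equation*}
\mathbb{P}^N\!\left\{\mathbb{P}\in\mathbb{B}_{\sigma_{\max}(\Lambda)\varepsilon}^{\Lambda}(\widehat{\mathbb{P}}_N)\right\}
\;\ge\;
\mathbb{P}^N\!\left\{\mathbb{P}\in\mathbb{B}_{\varepsilon}(\widehat{\mathbb{P}}_N)\right\}.
\end{equation*}

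Finally, applying the isotropic finite-sample bound~\eqref{finite-sample guarantee}, which the statement takes as a hypothesis (and which holds under the light-tail Assumption~\ref{light tail} by the Fournier--Guillin concentration result), transfers the same two-regime lower bound to $\mathbb{B}_{\varepsilon(\Lambda)}^{\Lambda}(\widehat{\mathbb{P}}_N)$ with $\varepsilon(\Lambda)=\sigma_{\max}(\Lambda)\varepsilon$, which is exactly~\eqref{finite-sample guarantee for anistropic ball}. The main conceptual step is the first one: recognizing that scaling the radius by $\sigma_{\max}(\Lambda)$ is the minimal uniform inflation that absorbs the geometric distortion introduced by $\Lambda$; everything after that is a containment-and-monotonicity argument, so no serious obstacle is anticipated. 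The one subtlety worth highlighting in the write-up is that the statistical constants $c_1,c_2$ in~\eqref{finite-sample guarantee for anistropic ball} are inherited unchanged from the isotropic inequality because we do not apply Fournier--Guillin to the pushforward measure (which would change constants through $\sigma_{\max}(\Lambda)$), but rather to the original empirical distribution.
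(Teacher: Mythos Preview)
Your proposal is correct and follows essentially the same route as the paper: both establish the domination $d_{W_p}^{\Lambda}(\mathbb{P},\mathbb{Q}) \le \sigma_{\max}(\Lambda)\, d_{W_p}(\mathbb{P},\mathbb{Q})$ via the operator-norm bound $\|\Lambda x\|\le\sigma_{\max}(\Lambda)\|x\|$ applied inside the coupling integral, and then transfer the isotropic concentration inequality to the rescaled anisotropic ball. Your write-up is slightly more explicit in framing the last step as a set inclusion plus monotonicity of $\mathbb{P}^N$, and your remark that the constants $c_1,c_2$ are inherited unchanged (because Fournier--Guillin is applied to the original empirical measure, not a pushforward) is a nice clarification the paper leaves implicit.
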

\begin{proof}
The inequality \eqref{finite-sample guarantee} can be directly derived from the concentration inequality below.
$$
\begin{aligned}
    &\mathbb{P}^N\left\{ d_{\mathrm{W}}\left( \mathbb{P}, \widehat{\mathbb{P}}_N \right) \geq \varepsilon \right\} \\
    &\leq\begin{cases}
c_1 \exp\left( -c_2 N \varepsilon^{\max\{m,2\}} \right), & \text{if } \varepsilon \leq 1, \\
c_1 \exp\left( -c_2 N \varepsilon^a \right), & \text{if } \varepsilon > 1.
\end{cases}
\end{aligned}
$$
    According to the definition of the operator norm of a positive definite matrix, we have
$$\|\Lambda \tilde{z}-\Lambda z\|\leq \|\Lambda \|\cdot \|\tilde{z}-z\|=\sigma_{max}(\Lambda)\|\tilde{z}-z\|.$$
Combine it with 
$$        d_{W_p}^\Lambda(\mathbb{P},\mathbb{Q}) = \left(\inf_{\pi\in\mathcal{P}(\mathcal{Z}^2)}\left\{
    \begin{gathered}
            \mathbb{E}_{(\tilde{z},z)\sim\pi}\left[\|\tilde{z}-z\|^p_\Lambda\right]:\pi\text{ has} \\ \text{marginal distributions }{\mathbb{P}},\mathbb{Q}
    \end{gathered}
    \right\}\right)^{\frac{1}{p}},$$
we can obtain the following relationship
$$d_{W_p}^\Lambda(\mathbb{P},\mathbb{Q})=d_{W_p}(\mathbb{P}\#\Lambda ,\mathbb{Q}\#\Lambda )\leq {\sigma_{max}(\Lambda)}d_{W_p}(\mathbb{P},\mathbb{Q}).$$
Then the following inequalities hold.
$$
\begin{aligned}
    &\mathbb{P}^N\left\{ d_{\mathrm{W}}^\Lambda\left( \mathbb{P}, \widehat{\mathbb{P}}_N \right) \geq \varepsilon(\Lambda) \right\} \\
    &\leq\begin{cases}
c_1 \exp\left( -c_2 N \varepsilon^{\max\{m,2\}} \right), & \text{if } \varepsilon \leq 1, \\
c_1 \exp\left( -c_2 N \varepsilon^a \right), & \text{if } \varepsilon > 1.
\end{cases}
\end{aligned}
$$
\end{proof}
Theorem~\ref{thm: finite-sample guarantee} establishes that, for a given sample size, the proposed anisotropic Wasserstein ambiguity set contains the true underlying distribution with high probability. This guarantee ensures that the downstream control problem is posed on a statistically reliable ambiguity set, thereby safeguarding the control safety even when we tune $\Lambda$ in an end-to-end manner.

\subsection{Anisotropic Wasserstein DRC and convex reformulation}
Under the proposed anisotropic ambiguity set \eqref{anistropic wasserstein ball}, the formulation of the DRC problem (\ref{DR-MPC}) is modified below. 
\begin{equation}
    \begin{aligned}\min_{\mathbf{v},\mathbf{M}}\quad &\sup_{\mathbb{Q}\in\mathbb{B}^\Lambda_{\varepsilon(\Lambda)}\left(\widehat{\mathbb{P}}_N\right)}\mathbb{E}_{\mathbf{w}\sim\mathbb{Q}}\left[h\left(\mathbf{y},\mathbf{z}\right)\right]\\
    \mathrm{s.t.}\quad&[\mathbf{M},\mathbf{v}]\in\mathcal{Z}_{\tau}\\&\sup_{\mathbb{Q}\in\mathbb{B}^\Lambda_{\varepsilon(\Lambda)}\left(\hat{\mathbb{P}}\right)}\mathrm{CVaR}_{1-\eta}^{\mathbf{w}\sim\mathbb{Q}}\left(g(\mathbf{y},\mathbf{z})\right)\leq0\end{aligned}
    \label{MPC}
\end{equation}
We then prove that the resulting problem \eqref{MPC} can be equivalently reformulated into a tractable and convex program when the cost and constraint functions are piecewise affine, as established in Theorem~\ref{convex reformulation thm}.

\begin{theorem}
\label{convex reformulation thm}
    Define the cost function $h(\mathbf{y},\mathbf{z}):= \max_{1\leq j\leq n_1} a_j^\top \mathbf{y} + b_j^\top \mathbf{z} +c_j$. The constraint function $g$ can be reformulated as the following piecewise affine form. 
    \begin{equation}
    \begin{aligned}
            g(\mathbf{y},\mathbf{z})
            &= \max_{1\leq k\leq T} F_x^\top E_k\mathbf{y}+F_u^\top E_k\mathbf{z} +f\\
            &:= \max_{1\leq k\leq T} F_{x,k}^\top \mathbf{y} + F_{u,k}\mathbf{z}+f
    \end{aligned}
    \end{equation}
    where $E_k$ denotes the selection matrix that extracts the $k$-th block $x_k$ from the stacked vector $\mathbf{y}$, and $F_{x,k}:=E_k^\top F_x, F_{u,k}:=E_k^\top F_u$.
    Then the DRC problem \eqref{MPC} can be equivalently reformulated as the following convex cone program.
    \begin{equation}
    \begin{aligned}
 &\inf_{\lambda,s_i,\widetilde{\lambda},q_i,t,\mathbf{M},\mathbf{v}}\quad \lambda\varepsilon(\Lambda) +\frac{1}{N}\sum_{i=1}^N s_i \\
            \text{s.t. }
            &a_j^\top L\begin{bmatrix} x_0 \\ \mathbf{v} \end{bmatrix}+b_j^\top \begin{bmatrix} x_0 \\ \mathbf{v} \end{bmatrix}+a_j^\top\left(L_{22}\mathbf{M}+H\right)\hat{\mathbf{w}}^i \\
            &+ b_j^\top\begin{bmatrix} 0 \\ \mathbf{M} \end{bmatrix}\hat{\mathbf{w}}^i + c_j\leq s_i \\
            & \left\|\begin{bmatrix} 0 &\mathbf{M}^\top  \end{bmatrix}b_j+\left(L_{22}\mathbf{M}+H\right)^\top a_j\right\|_{ \Lambda^{-1} }  \leq \lambda \\
            &\widetilde{\lambda}\varepsilon(\Lambda) + \frac{1}{N}\sum_{i=1}^N q_i\leq t\eta \\
        & \left[
        \begin{aligned}
        &F_{x,k}^\top L\begin{bmatrix} x_0 \\ \mathbf{v} \end{bmatrix} +F_{x,k}^\top\left(L_{22}\mathbf{M}+H\right)\hat{\mathbf{w}}^i+ \\
        &F_{u,k}^\top\begin{bmatrix}x_0\\\mathbf{v}\end{bmatrix}+F_{u,k}^\top\begin{bmatrix}0\\\mathbf{M}\end{bmatrix}\hat{\mathbf{w}}^i + f+t
        \end{aligned}
        \right]_+ \leq q_i \\
        & \left\|\begin{bmatrix}0 &\mathbf{M}^\top \end{bmatrix} F_{u,k}+\left(L_{22}\mathbf{M}+H\right)^\top F_{x,k}\right\|_{\Lambda^{-1}}\leq\widetilde{\lambda} \\
        & \forall 1\leq i\leq N,1\leq j\leq n_j, 1\leq k\leq T\\
        & \{\lambda,s_i,\widetilde{\lambda},q_i,t,\mathbf{M},\mathbf{v}\}\in \mathcal{M}
    \end{aligned}
    \label{convex reformulation}
\end{equation}
where the matrix $L_{22}$ represents the submatrix obtained from $L$ by removing its first block row and first block column. The set $\mathcal{M}$ represents other possible constraints.
\end{theorem}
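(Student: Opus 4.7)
The plan is to reduce the inner robust problem to a pair of Wasserstein DRO reformulations---one for the objective supremum and one for the CVaR constraint---each instantiated under the anisotropic norm $\|\cdot\|_\Lambda$. The workhorse is the standard strong-duality theorem for Wasserstein DRO with max-affine losses (Mohajerin Esfahani--Kuhn type), applied after the disturbance-feedback substitution renders every primitive function piecewise affine in $\mathbf{w}$ with coefficients linear in $(\mathbf{M},\mathbf{v})$.

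First I would substitute $\mathbf{u}=\mathbf{M}\mathbf{w}+\mathbf{v}$ into the closed-loop expressions to obtain $\mathbf{z}=[x_0;\mathbf{v}]+[0;\mathbf{M}]\mathbf{w}$ and $\mathbf{y}=L[x_0;\mathbf{v}]+(L_{22}\mathbf{M}+H)\mathbf{w}$, so that each affine piece of $h$ takes the form $\alpha_j^\top\mathbf{w}+\beta_j$ with $\alpha_j=(L_{22}\mathbf{M}+H)^\top a_j+[0;\mathbf{M}^\top]b_j$, and analogously for each $g_k$ with $F_{x,k},F_{u,k}$ in place of $a_j,b_j$. Next I would invoke Wasserstein strong duality with transport cost $\|\cdot\|_\Lambda$: a direct computation---equivalent, via Proposition \ref{prop: distance equivalence}, to a pushforward change of variables---shows that the dual norm of $\|\cdot\|_\Lambda$ is $\|\cdot\|_{\Lambda^{-1}}$, so the inner supremum $\sup_{\mathbf{w}}\{\alpha_j^\top\mathbf{w}+\beta_j-\lambda\|\mathbf{w}-\hat{\mathbf{w}}^i\|_\Lambda\}$ equals $\alpha_j^\top\hat{\mathbf{w}}^i+\beta_j$ when $\lambda\geq\|\alpha_j\|_{\Lambda^{-1}}$ and $+\infty$ otherwise. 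Introducing epigraphical variables $s_i$ to linearize the max over $j$ produces exactly the first block of constraints in (\ref{convex reformulation}), together with the dual-norm bound on $\lambda$.

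For the CVaR constraint I would start from the Rockafellar--Uryasev form in Definition \ref{Def:CVaR} and then swap $\sup_{\mathbb{Q}}$ with $\inf_{\alpha}$, reducing the worst-case CVaR inequality to the existence of some $\alpha$ with $\alpha+\eta^{-1}\sup_{\mathbb{Q}}\mathbb{E}_{\mathbb{Q}}[(g-\alpha)_+]\leq 0$. Substituting $t=-\alpha$ and applying the same anisotropic Wasserstein duality to the max-affine integrand $(g+t)_+=\max\{0,g_1+t,\dots,g_T+t\}$ yields the remaining constraints, with the zero piece absorbed into the hinge $[\cdot]_+$ inside the epigraphical bound on $q_i$ and each nontrivial piece producing a dual-norm bound against $\widetilde{\lambda}$. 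Convexity of the final program then follows because every emergent constraint is either affine in the decision variables $(\lambda,\widetilde{\lambda},s_i,q_i,t,\mathbf{M},\mathbf{v})$ or a second-order-cone representable bound of the form $\|\Phi(\mathbf{M})\|_{\Lambda^{-1}}\leq\lambda$ (respectively $\widetilde{\lambda}$).

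The principal obstacle is justifying the sup--inf interchange in the CVaR step, since without it the dualization does not separate cleanly. I would discharge it by a Sion-type minimax argument: $\alpha\mapsto\alpha+\eta^{-1}\mathbb{E}_{\mathbb{Q}}[(g-\alpha)_+]$ is convex and continuous in $\alpha$ for every $\mathbb{Q}$, while $\mathbb{Q}\mapsto\mathbb{E}_{\mathbb{Q}}[(g-\alpha)_+]$ is linear and upper semicontinuous on the anisotropic Wasserstein ball $\mathbb{B}^{\Lambda}_{\varepsilon(\Lambda)}(\widehat{\mathbb{P}}_N)$, which is convex and weakly compact by standard tightness arguments available under the light-tail Assumption \ref{light tail}. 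A secondary check is the moment/growth condition required by the Wasserstein strong-duality theorem applied to the hinge loss $(g+t)_+$; this is automatic because each affine piece has finite moments of every order under Assumption \ref{light tail}, so the dual reformulation remains finite whenever $\widetilde{\lambda}$ dominates the dual norms of the coefficient vectors.
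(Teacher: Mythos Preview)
Your proposal is correct and follows essentially the same approach as the paper: both invoke the Mohajerin Esfahani--Kuhn strong-duality reformulation (the paper cites \cite{Wasserstein-DRO-1} directly), identify the dual norm of $\|\cdot\|_\Lambda$ as $\|\cdot\|_{\Lambda^{-1}}$, and then treat the CVaR constraint analogously after the Rockafellar--Uryasev substitution. Your write-up is in fact more thorough than the paper's proof, which simply asserts that the CVaR derivation ``follows an analogous process'' without addressing the sup--inf interchange; your Sion-type justification fills that gap.
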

\begin{proof}
    An equivalent reformulation of the worst-case cost function
$\sup_{\mathbb{Q}\in\mathbb{B}^\Lambda_{\varepsilon(\Lambda)}(\hat{\mathbb{P}})} \mathbb{E}_{\mathbf{w}\sim\mathbb{Q}}\!\left[h(\mathbf{y},\mathbf{z})\right]$
can be obtained via duality \cite{Wasserstein-DRO-1}, which yields
$$
    \begin{aligned}
        \inf_{\lambda,s_i,\mathbf{M},\mathbf{z},t_{ij}}\quad &\lambda\varepsilon(\Lambda) +\frac{1}{N}\sum_{i=1}^N s_i \\
            \text{s.t.   }\quad\quad
            &\left[-(a_j^\top \mathbf{y}+b_j^\top\mathbf{z}+c_j)\right]^*(t_{ij})+t_{ij}^\top\hat{\mathbf{w}}^i\leq s_i \\
            & \left\|t_{ij}\right\|_{ \Lambda}^*  \leq \lambda \\
            &\forall 1\leq i\leq N,1\leq j\leq n_j \\
    \end{aligned}
    $$ 
where $\{t_{ij}\}$ are auxiliary dual variables, $\left[\cdot\right]^*$ denotes the conjugate function, and $\|\cdot\|_{\Lambda}^*$ represents the dual norm associated with $\|\cdot\|_{\Lambda}$, respectively.

By explicitly computing the conjugate function and substituting the tractable form of the dual norm, the above problem admits the following equivalent convex representation,

    $$
    \begin{aligned}
        \inf_{\lambda,s_i,\mathbf{M},\mathbf{v}}\quad &\lambda\varepsilon(\Lambda) +\frac{1}{N}\sum_{i=1}^N s_i \\
            \text{s.t. }\quad\;
            &a_j^\top L\begin{bmatrix} x_0 \\ \mathbf{v} \end{bmatrix}+b_j^\top \begin{bmatrix} x_0 \\ \mathbf{v} \end{bmatrix}+a_j^\top\left(L_{22}\mathbf{M}+H\right)\hat{\mathbf{w}}^i \\
            &+ b_j^\top\begin{bmatrix} 0 \\ \mathbf{M} \end{bmatrix}\hat{\mathbf{w}}^i + c_j\leq s_i \\
            & \left\|\begin{bmatrix} 0 &\mathbf{M}^\top\end{bmatrix} b_j+\left(L_{22}\mathbf{M}+H\right)^\top a_j\right\|_{ \Lambda^{-1} }  \leq \lambda \\
            &\forall 1\leq i\leq N,1\leq j\leq n_j \\
    \end{aligned}
    $$
    Here, the appearance of $\Lambda^{-1}$ arises naturally from the computation of dual norm $\|\cdot\|_{\Lambda}$. The derivation for the worst-case function $\sup_{\mathbb{Q}\in\mathbb{B}^\Lambda_{\varepsilon(\Lambda)}\left(\hat{\mathbb{P}}\right)}\mathrm{CVaR}_{1-\eta}^{\mathbf{w}\sim\mathbb{Q}}\left(g(\mathbf{y},\mathbf{z})\right)$ follows an analogous process.
\end{proof}

\begin{remark}
The set $\mathcal{M}$ in \eqref{convex reformulation} is a user-specified bounded set that serves as an artificial bounding box for the decision variable. 
The bounds are deliberately set sufficiently large so that the true optimal solution of the original problem is contained within $\mathcal{M}$. This bounding is introduced solely to facilitate the subsequent theoretical analysis.
\end{remark}

\begin{remark}
We implicitly assume an unbounded support set for $\mathbf{w}$ in Theorem~\ref{convex reformulation thm}. If, instead, the support set of $\mathbf{w}$ is bounded, denoted by $\Xi := \{ z \mid Ez \leq e \}$, the optimization problem~\eqref{MPC} can still be reformulated as a convex cone program, with its specific form provided in Appendix I.
\end{remark}

Having established the distributionally robust controller under the anisotropic ambiguity set, the remaining challenge is to determine how to learn an anisotropy metric matrix $\Lambda$ that leads to the best closed-loop control performance. In the following section, we present an end-to-end metric learning framework for optimizing $\Lambda$.

\subsection{Bilevel learning framework}
Building on the equivalent convex reformulation of DRC with anisotropic Wasserstein ambiguity sets, we now turn to the problem of learning the anisotropic metric matrix $\Lambda$ for improved control performance. To this end, we propose a bilevel end-to-end learning framework that intelligently adjusts $\Lambda$ leveraging downstream control-performance feedback, thereby aligning the ambiguity set geometry with control-relevant directions while preserving safety constraints.

Given a candidate metric matrix $\Lambda$, the inner level characterizes the closed-loop system evolution under the Wasserstein DRC formulated with the proposed anisotropic ambiguity set. Starting from a random initial state $x_0$ and evolving under stochastic dynamics, the controller at each time step solves the finite-horizon DRC problem \eqref{convex reformulation} based on the current state and parameter $\Lambda$, and implements the first control action of the resulting solution. Iterating this receding-horizon procedure for $L$ steps defines a stochastic closed-loop trajectory, whose distribution is jointly determined by the random initial condition, stochastic dynamics and the controller induced by $\Lambda$. This trajectory reveals how $\Lambda$ influences the closed-loop response of the system to disturbances, providing the basis for the outer-level evaluation of control performance.

At the outer level, the matrix $\Lambda$ is updated by minimizing the expected finite-horizon closed-loop cost with respect to the joint distribution of the initial state and disturbance sequence. Taking expectation on the initial state ensures that control performance is optimized across a range of initial conditions rather than a single condition as in existing end-to-end control literatures, thereby promoting generalization beyond specific starting states. In addition, we impose a CVaR-type safety constraint that limits the constraint violations along the entire horizon. Together, these elements allow the outer problem to refine the geometry of the ambiguity set in a performance-driven manner, while simultaneously ensuring robustness across diverse initial conditions.

Formally, the resulting bilevel metric learning problem is expressed as follows.
\begin{equation}
    \begin{aligned}
        \min_{\Lambda \in \mathcal{A}}\; &\mathbb{E}_{x_0,\mathbf{w}}\left\{\max_{1\leq j\leq n_j} \widetilde{a}_j^\top \begin{bmatrix} x_1 \\ \vdots \\x_L \end{bmatrix}+\widetilde{b}_j^\top \begin{bmatrix} x_0 \\ u_0\\ \vdots \\ u_{L-1} \end{bmatrix}+\widetilde{c}_j \right\}\\
        \text{s.t.}\; &\text{CVaR}_{1-\eta}^{(x_0,\mathbf{w})\sim \mathbb{P}_{\mathcal{X}_0}\times\mathbb{P}_{\mathbf{w}}}\left\{\max_{i\in\{0,\ldots,L-1\}} g(x_i,u_i)\right\}\leq 0, \\
        & x_{t+1} = Ax_t+B v^*_{\text{DRC}}(x_t,\Lambda)+w_t
    \end{aligned}
    \label{original outer-loop problem}
\end{equation}
where the cost is modeled as a piecewise affine function of the trajectory, and the CVaR constraint enforces probabilistic safety guarantees. Here, $(x_0,\mathbf{w})$ is drawn from the product distribution $\mathbb{P}_{\mathcal{X}_0} \times \mathbb{P}_{\mathbf{w}}$, with $x_0$ following a uniform distribution $\mathbb{P}_{\mathcal{X}_0}$ supported on the user-specified set $\mathcal{X}_0$, and $v^*_{\mathrm{DRC}}(x_t,\Lambda)$ denoting the first element of the DRC solution $\mathbf{v}^*$ to the inner-level problem \eqref{convex reformulation} under the metric $\Lambda$.

Through this bilevel structure, the ambiguity set is no longer treated as a static object derived solely from data, but is adaptively learned in tandem with control synthesis. The proposed end-to-end design thus moves beyond conventional sequential formulations, producing controllers that are both less conservative and more broadly generalizable across varying initial states.

\section{Solution Algorithm}

In this section, we present a solution methodology for solving the bilevel optimization problem aimed at learning the anisotropy metric matrix $\Lambda$.
Since the true disturbance distribution is unknown, the outer-level stochastic program cannot be evaluated in closed form. Robustness against distributional uncertainty is already enforced at the inner level through the proposed anisotropic Wasserstein ambiguity set, equipped with finite-sample guarantees to ensure safety. The outer level therefore focuses on refining the ambiguity set geometry in a performance-driven manner rather than introducing additional conservatism. To this end, we adopt a Sample Average Approximation (SAA) approach, where the expectations over closed-loop cost and risk constraints are replaced by empirical averages computed from a finite collection of rollout trajectories. This choice turns the outer optimization into a data-driven problem that directly leverages available trajectory information while remaining statistically consistent as the sample size grows.
The resulting SAA formulation is nonsmooth and nonconvex, due to the implicit dependence of trajectories on $\Lambda$. These features raise nontrivial computational challenges. To tackle these difficulties, we develop an augmented Lagrangian method tailored to constrained nonsmooth optimization. A key technical ingredient is the computation of generalized Jacobians of the cost and constraints with respect to $\Lambda$, which enables principled updates despite the problem’s nonsmooth and implicit structure. The following subsections detail how these generalized Jacobians are derived and incorporated into the algorithmic framework.
\subsection{Sample approximation problem}
Suppose we collect the closed-loop trajectories of length $L$ as $\{(x_{t+1}^{i}, u_t^{i})_{t=0}^{L-1}\}_{i=1}^{N}$, each initialized from a sample $x_0^{i} \in \mathcal{X}_0$. Based on these trajectories, we construct the following sample-based approximation of problem \eqref{original outer-loop problem},
\begin{equation}
    \begin{aligned}
        \min_{\Lambda \in \mathcal{A}}\; &\frac{1}{N}\sum_{i=1}^{N} \left\{\max_{1\leq j\leq n_j} \widetilde{a}_j^\top \begin{bmatrix} x_1^i \\ \vdots \\x_L^i \end{bmatrix}+\widetilde{b}_j^\top \begin{bmatrix} x_0^i \\ u_0^i\\ \vdots \\ u_{L-1}^i \end{bmatrix}+\widetilde{c}_j  \right\}\\
        \text{s.t.}\; &  \frac{1}{N}\sum_{i=1}^{N}\left\{ \frac{\left[\max_{t\in\{0,\ldots,L-1\}} g\left(x_t^{i},u_t^i\right)-\alpha\right]_+}{\eta}+\alpha
        \right\}\leq 0
    \end{aligned}
    \label{sample-based problem}
\end{equation}
Here, each control input $u_t^i$ is obtained by solving the optimization problem \eqref{convex reformulation}, which is parameterized by $\Lambda$. Consequently, the mappings from $\Lambda$ to the closed-loop cost and constraint violations are generally nonsmooth and potentially non-differentiable,  making  standard gradient-based optimization methods inapplicable.

To address this challenge, we employ the augmented Lagrangian method.
For notational clarity, we rewrite the SAA version of the outer-loop optimization problem in the following abstract form.
\begin{equation}
    \begin{aligned}
        \min_{\Lambda \in \mathcal{A}}\; &F(\Lambda)\\
        \text{s.t.}\; &G(\Lambda,\alpha)\leq 0
    \end{aligned}
\end{equation}
where $F(\Lambda)$ and $G(\Lambda, \alpha)$ represent the SAA of $\mathbb{E}(\widetilde{F}(\Lambda,x_0,\mathbf{w}))$ and $\mathbb{E}(\widetilde{G}(\Lambda,\alpha,x_0,\mathbf{w}))$, respectively.

Next, we convert the inequality constraint into an equivalent equality formulation by introducing a nonnegative slack variable $\kappa \geq 0$, yielding the following form.
\begin{equation}
    \begin{aligned}
        \min_{(\Lambda,\alpha,\kappa) \in \widetilde{\mathcal{M}}}\; &F(\Lambda),\\
        \text{s.t.}\; &G(\Lambda,\alpha)+\kappa= 0,
    \end{aligned}
    \label{SAA of outer-loop problem}
\end{equation}
where set $\widetilde{\mathcal{M}}$ is the set of regions for the variables.
In particular, $\mathcal{A}$ is specified as the set of positive definite matrices with eigenvalues bounded by sufficiently large constants, ensuring these bounds do not restrict the optimal solution. Similarly, sufficiently large bounds are imposed on the variables $\alpha$ and $\kappa$ to facilitate the theoretical analysis in the following sections.

The augmented Lagrangian function associated with \eqref{SAA of outer-loop problem} is then defined as
\begin{equation}
H(\Lambda,\alpha,\kappa,\mu,\nu)=F(\Lambda)+\mu\left(G(\Lambda,\alpha)+\kappa\right)+\frac{\nu}{2}\left\|G(\Lambda,\alpha)+\kappa\right\|^2
    \label{augmented lagrangian}
\end{equation}
where $\mu$ is the Lagrange multiplier and $\nu > 0$ is the penalty parameter.
During the solution process, Jacobian-based updates of $\Lambda$ in the augmented Lagrangian algorithm require estimating generalized Jacobians of $H$. Since the objective and constraint functions depend implicitly on solutions of the inner-level problem \eqref{convex reformulation}, direct differentiation is generally infeasible. To address this, we further introduce the concept of conservative Jacobians in the next subsection, providing a practical approach to approximate generalized Jacobians and guide efficient descent directions despite nonsmoothness.

\subsection{Differentiate the controller}
To facilitate the implementation of the algorithm, we introduce the concept of conservative Jacobians, a generalization of classical Jacobians to nonsmooth, path-differentiable mappings that arise from implicitly defined optimization problems.
\begin{definition}[Conservative Jacobian, \cite{differentiable-optimization-4}]
    Let $S:\mathbb{R}^p\to\mathbb{R}^m$ be a locally Lipschitz function. $J_S:\mathbb{R}^p\rightrightarrows\mathbb{R}^{m\times p}$ is called a conservative mapping for $S$, if for any absolutely continuous curve $\gamma:[0,1]\to\mathbb{R}^p$, the function $t\mapsto S(\gamma(t))$ satisfies
    $$\frac d{dt}S(\gamma(t))=V\dot{\gamma}(t)\quad\forall V\in J_S(\gamma(t))$$
    for almost all $t\in[0,1]$. A locally Lipschitz function $S$ that admits a conservative Jacobian $J_S$ is called path differentiable.
\end{definition}
In our settings, the augmented Lagrangian function implicitly depends on the optimal control input obtained by solving the conic program \eqref{convex reformulation}. To rigorously justify the use of conservative Jacobians and guarantee their well-definedness, we first establish the path differentiability of the optimal control solution with respect to the matrix $\Lambda$. This analysis relies on the following Assumption \ref{uniqueness}.
\begin{assumption}
    The solution of the conic optimization problem \eqref{convex reformulation} is unique.
    \label{uniqueness}
\end{assumption}

\begin{proposition}
The augmented Lagrangian function 
$H(\Lambda,\alpha,\kappa,\mu,\nu)$ is locally Lipschitz continuous in each variable.
\label{prop: lipschitz of H}
\end{proposition}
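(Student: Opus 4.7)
The plan is to decompose $H$ into components whose Lipschitz behaviour can be analysed separately and to trace the only nontrivial dependence, that on $\Lambda$, through the inner conic program (\ref{convex reformulation}). Writing $r(\Lambda,\alpha,\kappa):=G(\Lambda,\alpha)+\kappa$, the identity
\begin{equation*}
H=F(\Lambda)+\mu\,r(\Lambda,\alpha,\kappa)+\tfrac{\nu}{2}\,r(\Lambda,\alpha,\kappa)^{2}
\end{equation*}
shows that $\mu$ and $\nu$ enter only affinely, while $\kappa$ enters affinely in the multiplier term and quadratically in the penalty term; hence local Lipschitz continuity in each of $\mu$, $\nu$, $\kappa$ follows at once from boundedness of $F$ and $G$ on the bounded domain $\widetilde{\mathcal{M}}$. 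For $\alpha$, the structure of $G$ is a finite sum of an affine term in $\alpha$ and a composition $[\,\cdot-\alpha\,]_+$, which is $1$-Lipschitz in $\alpha$; thus $G(\Lambda,\cdot)$ is globally Lipschitz in $\alpha$ with a constant depending only on $\eta$, uniformly over $\Lambda$.

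The remaining and main task is local Lipschitz continuity in $\Lambda$. I would argue along the chain
\begin{equation*}
\Lambda \;\longmapsto\; v^{*}_{\text{DRC}}(x,\Lambda) \;\longmapsto\; (x_{t}^{i},u_{t}^{i})_{t=0}^{L} \;\longmapsto\; F(\Lambda),\;G(\Lambda,\alpha).
\end{equation*}
On the admissible set $\mathcal{A}$, whose eigenvalues are confined to a compact interval bounded away from zero, the coefficients of the inner program, in particular $\varepsilon(\Lambda)=\sigma_{\max}(\Lambda)\varepsilon$ and the weighted dual norm $\|\cdot\|_{\Lambda^{-1}}$, are locally Lipschitz in $\Lambda$. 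Combining Assumption~\ref{uniqueness} with classical parametric stability theory for convex conic programs (for instance, Robinson's strong regularity, or Shapiro's Lipschitz stability under Slater and a quadratic growth condition), the solution map $\Lambda\mapsto v^{*}_{\text{DRC}}(x,\Lambda)$ is locally Lipschitz, uniformly over $x$ lying in the compact set of reachable states. Iterating the closed-loop update $x_{t+1}^{i}=Ax_{t}^{i}+Bv^{*}_{\text{DRC}}(x_{t}^{i},\Lambda)+w_{t}^{i}$ over the finite horizon $L$ propagates Lipschitzness to the entire trajectory, and because $F$ and $G$ are finite maxima of affine functions composed with $[\,\cdot\,]_+$, both being $1$-Lipschitz operations, they inherit local Lipschitz continuity in $\Lambda$.

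The chief obstacle is this solution-map stability step. Uniqueness from Assumption~\ref{uniqueness} alone does not deliver Lipschitz dependence on the parameter; one additionally needs a constraint qualification and a second-order growth condition. My approach would be either to invoke a black-box sensitivity theorem for parametric convex conic programs under these standard assumptions, or to verify Slater and a strong second-order sufficient condition directly from the structure of (\ref{convex reformulation}) on the bounded box $\mathcal{M}$. Once this is in hand, the composition argument through the finite-horizon dynamics and the piecewise-affine cost and constraint functions is routine, and assembling the pieces across blocks yields the claimed local Lipschitz continuity of $H$ in each variable on $\widetilde{\mathcal{M}}$ together with any bounded neighbourhood of $(\mu,\nu)$.
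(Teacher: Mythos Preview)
Your decomposition---handling $\mu,\nu,\kappa,\alpha$ by elementary structural arguments and isolating the real work in the $\Lambda$-dependence through the inner solution map---is exactly the paper's strategy in Appendix~D. The substantive difference lies in how you secure regularity of $\Lambda\mapsto v^{*}_{\text{DRC}}(x,\Lambda)$. The paper routes this through its own Theorem~\ref{lipschitz of optimal solution}, which in turn rests on Lemma~\ref{general continuity lemma}: that lemma verifies joint convexity of the constraint functions in both the decision variable and the parameter $(x_0,\Lambda^{-1})$, together with strict quasiconvexity of the objective, and then invokes classical Fiacco-type parametric continuity results to obtain continuity of the unique minimiser; local Lipschitzness is then claimed on the compact eigenvalue-bounded set $\mathcal{A}$. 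You instead propose Robinson/Shapiro-style sensitivity theory for conic programs, requiring Slater and a second-order growth condition in addition to Assumption~\ref{uniqueness}. Your route is more explicit about the gap between mere continuity and Lipschitz continuity, and it names precisely which extra regularity hypotheses are needed; the paper's route is self-contained and avoids checking any second-order condition, but is less explicit about how continuity upgrades to local Lipschitzness. Once solution-map regularity is in hand, both proofs finish identically by propagating through the finite-horizon linear dynamics and composing with the piecewise-affine cost and constraint maps.
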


\begin{proof}
The proof relies on theoretical results developed in Section 5, and is provided in Appendix D.
\end{proof}

With Assumption \ref{uniqueness} and Proposition \ref{prop: lipschitz of H} in place, we can guarantee the path differentiability of the optimal control policy as formalized in Theorem \ref{thm: differentiability of DRC}.
\begin{theorem}
\label{thm: differentiability of DRC}
    Suppose Assumption \ref{uniqueness} holds.
    Let $v_{\mathrm{DRC}}^*(x_0, \Lambda)$ denote the first element of the optimal control sequence obtained by solving \eqref{convex reformulation}. Then $v_{\mathrm{DRC}}^*(x_0, \Lambda)$ is path differentiable, and its conservative Jacobian $J_{v_\mathrm{DRC}^*}(\Lambda)$ is nonempty. 
\end{theorem}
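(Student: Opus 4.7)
The plan is to derive path differentiability of $v^*_{\mathrm{DRC}}(x_0,\Lambda)$ from two ingredients: local Lipschitz continuity of the parametric solution of \eqref{convex reformulation} in $\Lambda$, and definability (semialgebraicity) of the underlying data. The key enabling fact is that any locally Lipschitz mapping that is definable in an o-minimal structure is automatically path differentiable, with its Clarke subdifferential forming a nonempty, compact-valued conservative Jacobian, as recorded in \cite{differentiable-optimization-4}. My proof would assemble these two ingredients and then invoke this general principle.

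First, I would verify the semialgebraic structure. Because $\mathcal{A}\subset\mathbb{S}_{++}^d$ is restricted to matrices with uniformly bounded and uniformly bounded-away-from-zero eigenvalues, the matrix inverse $\Lambda\mapsto\Lambda^{-1}$ is rational in the entries of $\Lambda$ and hence semialgebraic. All other data of \eqref{convex reformulation} — the linear expressions in $(\mathbf{M},\mathbf{v})$, the Euclidean norm terms, and the artificial bounding set $\mathcal{M}$ — are semialgebraic by construction. Consequently, the graph of the primal-dual solution mapping $\Lambda\mapsto z^*(\Lambda)$ defined implicitly by the KKT system of \eqref{convex reformulation} is a semialgebraic set, and projection onto the coordinate corresponding to the first entry of $\mathbf{v}^*$ preserves semialgebraicity by the Tarski--Seidenberg theorem. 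This shows that $\Lambda\mapsto v^*_{\mathrm{DRC}}(x_0,\Lambda)$ is semialgebraic.

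Second, I would establish local Lipschitz continuity of $v^*_{\mathrm{DRC}}(x_0,\Lambda)$ in $\Lambda$. Under Assumption \ref{uniqueness}, the primal optimum of \eqref{convex reformulation} is a singleton; combined with convexity and a Slater-type constraint qualification provided by the interior of the bounding set $\mathcal{M}$, this yields strong stability of the solution in the Robinson sense. A standard perturbation-theoretic argument — analyzing how the $\Lambda$-dependent constraints $\|\cdot\|_{\Lambda^{-1}}\le\lambda$ and $\|\cdot\|_{\Lambda^{-1}}\le\widetilde{\lambda}$ shift under small perturbations of $\Lambda$, exploiting smoothness of $\Lambda\mapsto\Lambda^{-1}$ on $\mathcal{A}$ — then yields a uniform Lipschitz bound on $z^*(\Lambda)$ over compact subsets of $\mathcal{A}$, which transfers to $v^*_{\mathrm{DRC}}(x_0,\Lambda)$. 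Combining semialgebraicity with local Lipschitz continuity, the cited classical result delivers the conclusion: $v^*_{\mathrm{DRC}}(x_0,\Lambda)$ is path differentiable and its Clarke subdifferential provides a nonempty conservative Jacobian $J_{v_\mathrm{DRC}^*}(\Lambda)$.

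The principal obstacle is making the local Lipschitz step fully rigorous in the presence of second-order-cone constraints. Uniqueness of the primal solution alone does not mechanically give strong regularity of the KKT system; typically one must also control the dual solution set. I expect to handle this by observing that, for the convex conic program \eqref{convex reformulation}, uniqueness of the primal optimum together with Slater's condition secures boundedness of the dual multipliers, and that strict complementarity holds on a generic (semialgebraic) subset of $\mathcal{A}$ where standard conic sensitivity results apply directly. On the residual lower-dimensional set where strict complementarity fails, path differentiability is preserved by an o-minimal density argument based on the semialgebraic structure established in the first step, closing the proof.
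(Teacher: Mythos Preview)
Your proposal is logically coherent but follows a different route than the paper's proof. The paper proceeds \emph{constructively}: it vectorizes the decision variables, rewrites \eqref{convex reformulation} as a standard linear cone program $\min\{r^\top x: Gx+s=h,\ s\in\mathcal{K}\}$, and then invokes the homogeneous self-dual embedding machinery. Path differentiability is obtained by applying a nonsmooth implicit function theorem to the normalized residual map $\mathcal{N}(z,Q)=0$, using the known semismoothness of second-order-cone projections \cite{differentiable-cone}, and then chaining conservative Jacobians through the composition $\Lambda\mapsto(G,h,r)\mapsto Q\mapsto z\mapsto(x,y,s)$. This yields an \emph{explicit} formula for $J_{v^*_{\mathrm{DRC}}}(\Lambda)$, which is exactly what Algorithm~\ref{inner-loop algorithm} needs in line~7. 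Your route, by contrast, is existence-based: semialgebraicity plus local Lipschitzness implies path differentiability with the Clarke subdifferential as a conservative field. This is more elegant and more general, but it does not furnish a computable Jacobian, so it would leave the algorithmic side of the framework unsupported.

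The substantive gap in your argument is the local Lipschitz step. Uniqueness of the primal solution (Assumption~\ref{uniqueness}) together with Slater's condition guarantees boundedness of the dual set and continuity of the solution mapping, but for linear conic programs this generally yields only H\"older continuity in the parameters, not Lipschitz continuity; your appeal to generic strict complementarity and an o-minimal density patch on the exceptional set is plausible but not a complete argument as stated. The paper sidesteps this difficulty entirely: the self-dual embedding route delivers path differentiability directly from the semismoothness of cone projections and the conservative implicit function theorem, without requiring a separate Lipschitz estimate as an intermediate lemma. If you wish to pursue your approach, the cleanest fix is to replace the Lipschitz requirement with the weaker observation that semialgebraic single-valued continuous selections are locally Lipschitz on an open dense semialgebraic set, and then argue that this suffices for path differentiability on all of $\mathcal{A}$ via the stratification structure; but this still requires more care than your sketch provides.
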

\begin{proof}
    See Appendix A.
\end{proof}

This result establishes the theoretical foundation for differentiating through the control policy. Building on this, we next develop a framework for computing conservative Jacobians of composite quantities such as the augmented Lagrangian function.

\subsection{Differentiate the augmented Lagrangian function}

The augmented Lagrangian function is defined through cost and constraint surrogates along the system trajectory.
Since the closed-loop trajectory is governed by the control sequence $v_{\mathrm{DRC}}^*(x_0, \Lambda)$, and thereby depends on the metric matrix $\Lambda$, it is essential to understand how conservative Jacobians propagate through the system dynamics.
To handle this composite dependence, we first present a chain rule for conservative Jacobians.

\begin{lemma}[chain rule \cite{differentiable-optimization-2}]
    Let $F: \mathbb{R}^p \to \mathbb{R}^m$ and $G: \mathbb{R}^m \to \mathbb{R}^n$ be locally Lipschitz functions that admit conservative Jacobians $J_F$ and $J_G$, respectively. Then the composition $H := G \circ F$ is path differentiable, and a conservative Jacobian of $H$ at $x$ is given by
$$J_H(x) = \left\{ J_G(F(x)) \cdot J_F(x) \right\}.$$
\end{lemma}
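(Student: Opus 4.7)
The plan is to verify the chain rule directly from the curve-based definition of a conservative Jacobian, without appealing to any particular subdifferential calculus. Fix an arbitrary absolutely continuous curve $\gamma:[0,1]\to\mathbb{R}^p$ and introduce the auxiliary curve $\sigma(t):=F(\gamma(t))$. Since $F$ is locally Lipschitz and $\gamma([0,1])$ is compact, $\sigma$ is itself absolutely continuous on $[0,1]$, which is what makes the conservativity property of $J_G$ applicable in the next step.

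First I would invoke the conservativity of $J_F$ along $\gamma$: on a full-measure set $E_1\subseteq[0,1]$ we have $\dot{\sigma}(t)=V_F\dot{\gamma}(t)$ for every $V_F\in J_F(\gamma(t))$. Next I would invoke the conservativity of $J_G$ along the absolutely continuous curve $\sigma$: on a full-measure set $E_2\subseteq[0,1]$ we have $\tfrac{d}{dt}G(\sigma(t))=V_G\dot{\sigma}(t)$ for every $V_G\in J_G(\sigma(t))$. Substituting the first identity into the second on $E_1\cap E_2$ (still of full measure) yields
\[
\tfrac{d}{dt}H(\gamma(t))=V_G V_F\,\dot{\gamma}(t)\qquad\forall\,V_F\in J_F(\gamma(t)),\ \forall\,V_G\in J_G(F(\gamma(t))),
\]
which is exactly the conservativity requirement for the set-valued field $x\mapsto J_G(F(x))\cdot J_F(x)$. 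Because $\gamma$ was arbitrary, this establishes that $H$ is path differentiable and that the product set is a valid conservative Jacobian.

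To close the argument I would briefly check the standard regularity conditions inherited by $J_H$: the composition $H=G\circ F$ is locally Lipschitz as a composition of locally Lipschitz maps, and the set-valued map $x\mapsto J_G(F(x))\cdot J_F(x)$ is nonempty, locally bounded, and graph closed as the matrix product of set-valued maps enjoying those properties, composed with the continuous map $F$.

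The main obstacle worth flagging is the second step: one must ensure that the full-measure set supplied by $J_G$'s conservativity lives in the parameter interval $[0,1]$ rather than in the ambient space $\mathbb{R}^m$. This is precisely what the path-differentiability framework buys us—conservativity is stated \emph{curve-wise}, so applying it to the absolutely continuous curve $\sigma=F\circ\gamma$ produces an exceptional \emph{temporal} null set, sidestepping the measure-pullback pathologies that obstruct naive chain rules for Clarke subgradients and making the otherwise delicate composition step essentially a bookkeeping exercise.
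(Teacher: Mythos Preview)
Your argument is correct and is essentially the standard proof of the chain rule for conservative Jacobians: reparametrize by the intermediate curve $\sigma=F\circ\gamma$, use absolute continuity of $\sigma$ (via local Lipschitzness of $F$ on the compact image $\gamma([0,1])$), intersect the two temporal full-measure sets, and multiply. Your closing remark about why the exceptional set lives in $[0,1]$ rather than in $\mathbb{R}^m$ is exactly the point that makes this work where the naive Clarke chain rule fails.

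There is nothing to compare against in the paper itself: the lemma is stated with a citation to \cite{differentiable-optimization-2} and is used without proof. Your write-up reproduces the argument one finds in that reference (Bolte--Pauwels), so in that sense you are aligned with what the paper implicitly relies on. The only minor addition you might make explicit is that graph closedness and local boundedness of $J_F$ and $J_G$ are part of the \emph{definition} of a conservative Jacobian in that framework, so the regularity check at the end is indeed automatic once you know the matrix-product of two graph-closed, locally bounded set-valued maps (composed with the continuous $F$) inherits these properties.
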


In our setting, the system evolves according to the stochastic dynamics
$$x_{t+1} = A x_t + B u_t + w_t,$$
where $w_t$ is an exogenous disturbance independent of  $\Lambda$. Consequently, its contribution to the conservative Jacobian vanishes almost surely. The propagation of the conservative Jacobian through the system dynamics therefore reduces to
\begin{equation}
J_{x_{t+1}}(\Lambda) = AJ_{x_t}(\Lambda)+BJ_{u_t}(\Lambda),
\end{equation}
which reflects how sensitivity to the anisotropy metric matrix $\Lambda$ accumulates through the linear system.

This recursive formula enables differentiation through the system rollout. Equipped with this, we can now differentiate the augmented Lagrangian function and perform backpropagation within the optimization-defined control pipeline. The formal statement is given in Proposition~\ref{prop: differentiability of H}.
\begin{proposition}
\label{prop: differentiability of H}
    Suppose Assumption \ref{uniqueness} holds. The augmented Lagrangian function in equation \eqref{augmented lagrangian} is path differentiable with conservative Jacobian $J_H(\Lambda)$.
\end{proposition}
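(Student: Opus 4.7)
The plan is to establish path differentiability of $H$ through a three-stage propagation, starting from the inner-level controller and building up to the composite objective and penalty. The backbone of the argument is the chain rule for conservative Jacobians stated in the preceding lemma, together with the fact that locally Lipschitz, semialgebraic, and piecewise affine maps are closed under finite sums, products, and compositions, and that their Clarke subdifferentials serve as conservative Jacobians.

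First, I would invoke Theorem \ref{thm: differentiability of DRC} to obtain path differentiability of $v^*_{\mathrm{DRC}}(x_0^i,\Lambda)$ together with a nonempty conservative Jacobian $J_{v^*_{\mathrm{DRC}}}$. Applying this sample-by-sample and recursing through the rollout $x_{t+1}^i = A x_t^i + B\, v^*_{\mathrm{DRC}}(x_t^i,\Lambda) + w_t^i$, and noting that each disturbance $w_t^i$ is independent of $\Lambda$ so that its contribution vanishes, the chain rule yields path differentiability of every $u_t^i$ and $x_t^i$ as functions of $\Lambda$, with conservative Jacobians satisfying the recursion
$$J_{x_{t+1}^i}(\Lambda) = A\, J_{x_t^i}(\Lambda) + B\, J_{u_t^i}(\Lambda).$$

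Second, I would lift this to path differentiability of $F$ and $G$. The per-sample cost and the per-sample risk surrogate are both piecewise affine in the stacked trajectory variables: a maximum of finitely many affine forms in one case, and a maximum composed with $[\,\cdot-\alpha\,]_+$ in the other. Both are locally Lipschitz and semialgebraic, and their Clarke subdifferentials provide conservative Jacobians. Composing these outer piecewise affine maps with the path-differentiable trajectories via the chain rule, and then averaging over $i=1,\ldots,N$, produces conservative Jacobians $J_F(\Lambda)$ and $J_G(\Lambda,\alpha)$. Proposition \ref{prop: lipschitz of H} ensures that the local Lipschitz hypothesis required by the chain rule holds throughout.

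Third, I would assemble $H$ from its constituent blocks. The augmented Lagrangian is a finite sum of a path-differentiable term $F(\Lambda)$, a bilinear term $\mu(G(\Lambda,\alpha)+\kappa)$, and the composition of the smooth map $r\mapsto (\nu/2)\|r\|^2$ with $r=G(\Lambda,\alpha)+\kappa$. Because path differentiability is preserved under finite sums, scalar multiplication, and composition with smooth maps, a final application of the chain rule yields a conservative Jacobian whose $\Lambda$-block contains
$$J_F(\Lambda) + \bigl(\mu + \nu(G(\Lambda,\alpha)+\kappa)\bigr)\, J_G(\Lambda,\alpha),$$
with the partials in $(\alpha,\kappa,\mu,\nu)$ assembled analogously.

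The main obstacle is bookkeeping rather than conceptual: one has to verify that all intermediate mappings retain local Lipschitz continuity, which is guaranteed by Proposition \ref{prop: lipschitz of H} together with the stability results underlying Theorem \ref{thm: differentiability of DRC}, and that the chain rule applies jointly in $(\Lambda,\alpha,\kappa)$ rather than only in $\Lambda$. A subtle point is the treatment of the maximum operators and the $[\,\cdot\,]_+$ operator appearing in the CVaR surrogate: although their Clarke subdifferentials become set-valued at kink points, semialgebraicity ensures path differentiability, so composition with the path-differentiable trajectory mappings preserves the validity of the resulting conservative Jacobian.
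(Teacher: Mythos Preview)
Your proposal is correct and follows essentially the same route as the paper: the paper's proof is a one-liner (``This follows directly from Theorem~\ref{thm: differentiability of DRC} and the chain rule in Lemma~1''), and your three-stage argument is precisely the detailed unpacking of that chain-rule application through the rollout, the piecewise-affine outer maps, and the assembly of $H$. The additional care you take with semialgebraicity of the max and $[\cdot]_+$ operators and with the local Lipschitz bookkeeping via Proposition~\ref{prop: lipschitz of H} is a welcome elaboration that the paper leaves implicit.
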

\begin{proof}
    This follows directly from Theorem 3 and the chain rule in Lemma 1.
\end{proof}
\subsection{Two-level algorithm for stochastic augmented Lagrangian framework}
We address problem \eqref{SAA of outer-loop problem} by developing a safeguarded stochastic augmented Lagrangian algorithm with a two-level structure (Algorithm~\ref{outer-loop algorithm}). The outer level performs safeguarded updates of the decision variable $\Lambda$, the dual variable $\mu$, and the penalty parameter $\nu$. At each iteration, it invokes the inner solver to estimate the conservative Jacobian $J_H(\Lambda,\alpha,\kappa,\mu,\nu)$ on a mini-batch of sampled trajectories, and then applies elements of this Jacobian to carry out gradient-based updates of $(\Lambda,\alpha,\kappa)$. The outer loop further adjusts the dual variable and penalty parameter according to the safeguard rules.

The inner solver (Algorithm~\ref{inner-loop algorithm}) constructs the mini-batch estimate of $J_H$ by repeatedly sampling initial states and disturbance sequences, rolling out the closed-loop system under the controller $v_{\mathrm{DRC}}^*(\cdot,\Lambda)$, and recursively propagating conservative Jacobians along the trajectory. For each rollout, it differentiates the controller mapping to obtain $J_{v_{\mathrm{DRC}}^*}(\Lambda)$, updates the state Jacobians through the linear recursion, and collects the resulting per-step contributions. Aggregating over the batch yields an estimate of $J_H(\Lambda)$, together with the components $J_H(\alpha)$ and $J_H(\kappa)$, which are returned to the outer loop to enable gradient-based updates despite the implicit nonsmoothness of the control mapping.

\begin{algorithm}
\caption{Safeguarded stochastic augmented Lagrangian algorithm to solve \eqref{SAA of outer-loop problem}}
\begin{algorithmic}[1]
\State \textbf{given} $\Lambda^0 = \Lambda^{\text{init}}, \alpha^0, \kappa^0, \nu^0, \mu^0, \epsilon \geq 0, \kappa, \sigma > 1, \tau < 1, H_{\text{best}} = \infty$
\For{$k = 1, \ldots, k_{\max}$}
    \State $(\Lambda_0,\alpha_0,\kappa_0) \gets (\Lambda^k,\alpha^k,\kappa^k)$
    \For{$t = 1, \ldots, t_{\text{max}}$}
    \State sample data $(X_t \times W_t) \subseteq (\mathcal{X}_0 \times W)$
    compute $J_{H}(\Lambda_{t-1}, \alpha_{t-1}, \kappa_{t-1},\mu^k,\nu^k)$ via Algorithm 2
    \State $\alpha_t \gets \Pi_{\widetilde{\mathcal{M}}}(\alpha_{t-1} - \delta_t v_\alpha), \;$  with $v_\alpha \in J_{H}(\alpha_{t-1})$
    \State $\kappa_t \gets \Pi_{\widetilde{\mathcal{M}}}[(\kappa_{t-1} - \delta_t v_\kappa)_+]$, with $v_\kappa \in J_{H}(\kappa_{t-1})$
    \State $\Lambda_t \gets \Pi_{\widetilde{\mathcal{M}}}(\Lambda_{t-1} - \delta_t v_\Lambda)$,  with $v_\Lambda\in J_{H}(\Lambda_{t-1})$
\EndFor
\State \textbf{return} $(\alpha^t, \kappa^t, \Lambda^t)$
    \If{$\left\| G(\alpha^k, \Lambda^k) + \kappa^k \right\|_2 \leq \epsilon$}
        \State \Return $\Lambda^k$ 
    \ElsIf{$\left\| G(\alpha^k, \Lambda^k) + \kappa^k \right\|_2 \leq \tau \left\| G_{\text{best}} \right\|_2$}
        \State $\mu^k \gets \mathop{\Pi}\limits_{[\mu_{\min}, \mu_{\max}]} \left( \mu^{k-1} + \nu^{k-1} (G(\alpha^k, \Lambda^k) + \kappa^k ) \right)$
        \State $G_{\text{best}} \gets \left\| G(\alpha^k, \Lambda^k) + \kappa^k  \right\|_2$
        \State $\nu^k \gets \nu^{k-1}$ 
    \Else
        \State $\nu^k \gets \sigma \nu^{k-1}$ 
        \State $\mu^k \gets \mu^{k-1}$
    \EndIf
\EndFor
\State \Return $\Lambda^{k_{\max}}$
\end{algorithmic}
\label{outer-loop algorithm}
\end{algorithm}

\begin{algorithm}
\caption{Recursive computation in the inner loop}
\label{inner-loop algorithm}
\begin{algorithmic}[1]
\State \textbf{given} $\Lambda, \alpha, \kappa, \mu, \nu, \mathcal{X}_0 \times W$
\For{$j = 1, \ldots, D$}
    \State sample data $(x_0,\mathbf{w}) \in (\mathcal{X}_0 \times W)$
    \State $J_{x_0}(\Lambda)\gets 0$
    \For{$i=1,\ldots,L$}
    \State $u_{i-1} \gets v_{\mathrm{DRC}}^*(x_{i-1},\Lambda)$
    \State $J_{v_{\mathrm{DRC}}^*}(\Lambda)\gets$ differentiate the problem \eqref{convex reformulation}
    \State $x_i \gets Ax_{i-1}+Bu_{i-1}+w_{i-1}$
    \State $J_{x_i}(\Lambda) \gets AJ_{x_{i-1}}(\Lambda)+BJ_{v_{\mathrm{DRC}}^*}(\Lambda)$
    \EndFor
    \State $ J_{H}(\Lambda)\gets \begin{aligned}&\text{computed over }
        \\ &\quad J_{u_0}(\Lambda),J_{x_1}(\Lambda),\ldots,J_{u_{L-1}}(\Lambda),J_{x_L}(\Lambda)
    \end{aligned}$
    \State $J_H(\alpha)\gets \mu \nabla G(\Lambda,\alpha)+\nu (G(\Lambda,\alpha)+\kappa)$
    \State $J_H(\kappa)\gets \mu+\nu(G(\Lambda,\alpha)+\kappa)$
\EndFor
\State \textbf{return} $(\alpha^t, \kappa^t, \Lambda^t)$
\end{algorithmic}
\end{algorithm}

\section{Theoretical Analysis}
In this section, we establish theoretical guarantees for the proposed end-to-end metric learning framework. Our analysis begins by demonstrating that the optimal control solution depends on the metric matrix $\Lambda$ in a Lipschitz continuous manner. Building on this, we show that both the closed-loop cost and the CVaR-based constraint functions also admit Lipschitz constants when the disturbance sequence is fixed.

These continuity results enable us to analyze the stochastic augmented Lagrangian scheme despite the implicit dependence of the closed-loop dynamics on $\Lambda$. By combining trajectory-level sensitivity with techniques from variational analysis, we prove that, under certain assumptions, the proposed algorithm converges to a Clarke stationary point of \eqref{SAA of outer-loop problem}. Moreover, as the sample size grows, these stationary points approach the optimal solutions of the outer-level problem \eqref{original outer-loop problem}. Finally, we derive convergence rate guarantees that quantify the asymptotic behavior of the proposed algorithmic scheme in the presence of stochastic disturbances and nonsmooth control mappings.

\subsection{Continuity properties}
We begin by analyzing the sensitivity of the inner DRC problem with respect to the metric matrix $\Lambda$. Specifically, we show that its optimal solution varies Lipschitz continuously with changes in $\Lambda$. This property is essential because it guarantees that the sensitivity of the closed-loop dynamics with respect to the matrix $\Lambda$ can be systematically captured via conservative Jacobians.

To formalize this, we step back from the concrete controller structure and study a general class of parametric optimization problems, of which the inner DRC problem is a special case. This abstraction allows us to identify structural conditions under which the solution mapping is continuous, providing the foundation of our subsequent analysis.

Consider a general parametric program of the following form.
\begin{equation*}
\begin{aligned}
    J^*(x)=\inf_{z\in M}f(z,x)\\ \text{subj. to }g(z,x)\leq0,
\end{aligned}
\end{equation*}
where $f:\mathbb{R}^{n_1}\times \mathbb{R}^{n_2} \to \mathbb{R}$ is the cost function and $g:\mathbb{R}^{n_1}\times \mathbb{R}^{n_2} \to \mathbb{R}^{n_3}$ are the constraint functions. In particular, the dependence on the parameter $x$ captures the role played by $\Lambda$ in our DRC problem.

We define several sets relevant to the analysis. Let $R(x) := \{ z \in M \mid g(z, x) \leq 0 \}$ denote the set of feasible solutions associated with $x$. The set of parameters for which the problem admits at least one feasible solution is denoted by $K^* := \{ x \in \mathbb{R}^n \mid R(x) \neq \varnothing \}$. For each $x \in K^*$, we define the set of optimal solutions as $Z^*(x) := \{ z \in R(x) \mid f(z, x) \leq J^*(x) \}$, where $J^*(x)$ denotes the optimal value of the problem. We establish the continuity of the solution set for this general parametric problem in Lemma \ref{general continuity lemma}.
\begin{lemma}
    Assume that $M$ is a compact convex set in $\mathbb{R}^n$. Let $f$ and each component of $g$ be continuous on $M\times K^*$, with each component of $g$ convex in both $z\in M$ and $x\in K^*$. Moreover, suppose for every $x\in K^*$, $f(z,x)$ is strictly quasiconvex in $z$. Then the solution mapping $z^*(\cdot)$ is continuous on $K^*$.
    \label{general continuity lemma}
\end{lemma}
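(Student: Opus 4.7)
The plan is to apply Berge's Maximum Theorem: first establish that the feasible-set correspondence $R(\cdot)$ is continuous (both upper and lower hemicontinuous), then invoke Berge to obtain upper hemicontinuity of the argmin correspondence $Z^*(\cdot)$, and finally use strict quasiconvexity of $f(\cdot,x)$ over the convex set $R(x)$ to force $Z^*$ to be single-valued, which upgrades upper hemicontinuity to ordinary continuity of $z^*(\cdot)$.

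For upper hemicontinuity of $R$, I would take a sequence $x_k \to x$ in $K^*$ and $z_k \in R(x_k)$ with $z_k \to z$; compactness of $M$ gives $z \in M$, and continuity of $g$ yields $g(z,x) = \lim_k g(z_k, x_k) \le 0$, so $z \in R(x)$. Since $R$ has compact values contained in the compact set $M$, this closed-graph property is equivalent to upper hemicontinuity. For lower hemicontinuity, I would exploit the joint convexity of $g$. Given $x_k \to x$ and any $z \in R(x)$, choose an auxiliary point $\bar z \in M$ at which $g(\bar z, x)$ is strictly negative (a Slater-type point in the interior of $R(x)$), and form $z_k := (1-t_k) z + t_k \bar z$ with $t_k \to 0^+$ chosen large enough that the joint-convexity bound $g(z_k, x_k) \le (1 - t_k) g(z, x_k) + t_k g(\bar z, x_k)$ stays nonpositive for all large $k$; then $z_k \in R(x_k)$ and $z_k \to z$, as required.

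With continuity of $R$ in place, Berge's Maximum Theorem yields continuity of $J^*$ and upper hemicontinuity (with compact values) of $Z^*$ on $K^*$. To promote this to continuity as a single-valued function, I would combine convexity of $R(x)$ (which follows from convexity of $g$ in $z$) with strict quasiconvexity of $f(\cdot, x)$: any two distinct minimizers $z_1,z_2 \in Z^*(x)$ would admit a proper convex combination with strictly smaller objective value, contradicting optimality, so $Z^*(x) = \{z^*(x)\}$ is a singleton for every $x \in K^*$. An upper hemicontinuous correspondence with singleton values is precisely a continuous function, which gives the desired continuity of $z^*(\cdot)$.

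The main obstacle I expect is the lower hemicontinuity step, since the convex-interpolation construction tacitly requires a Slater-type point $\bar z$ where at least the active components of $g$ are strictly negative. If strict feasibility is not directly available, I would fall back on Hogan's classical continuity theorem for convex feasibility mappings, which handles boundary points of $R(x)$ by combining them with relative-interior points and tuning the step size uniformly across all (possibly active) constraint components; in the DRC context this is natural since the matrix variable $\Lambda$ ranges over an interior region of $\mathbb{S}_{++}^d$ with uniform eigenvalue bounds, which is exactly the regularity needed to produce such a $\bar z$.
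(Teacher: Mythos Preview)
Your proposal is correct and follows essentially the same route as the paper: establish continuity of the feasibility correspondence $R(\cdot)$, invoke a Berge-type maximum theorem to get upper semicontinuity of $Z^*(\cdot)$, then use strict quasiconvexity of $f(\cdot,x)$ over the convex set $R(x)$ to force single-valuedness and hence continuity of $z^*(\cdot)$. The paper simply cites external theorems for the continuity of $R(\cdot)$ (precisely the Hogan-type result you mention as your fallback) and for the upper semicontinuity of $Z^*(\cdot)$, whereas you spell out the closed-graph and convex-interpolation arguments explicitly; your anticipation of the Slater-point issue in the lower-hemicontinuity step is exactly the regularity hidden in the cited theorem.
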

\begin{proof}
    See Appendix B.
\end{proof}

With this general result in hand, we now return to the DRC problem \eqref{convex reformulation} and establish in Theorem \ref{lipschitz of optimal solution} that its solution mapping is Lipschitz continuous with respect to the anisotropy metric matrix $\Lambda$. This property is essential in our framework because $\Lambda$ serves as the decision variable in the outer-level metric learning problem. The Lipschitz continuity ensures that incremental updates of $\Lambda$ induce only gradual variations in the resulting optimal control policy, which in turn prevents instability in the end-to-end metric learning process.

\begin{theorem}
    The solutions of the inner DRC problem (\ref{convex reformulation}) vary continuously with respect to $x_0$ and $\Lambda$ over the feasible parameter set.
    \label{lipschitz of optimal solution}
\end{theorem}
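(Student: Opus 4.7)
The plan is to cast the inner DRC problem \eqref{convex reformulation} into the parametric form handled by Lemma \ref{general continuity lemma}, and then apply that lemma. To do so, I would take the decision vector to be $z := (\lambda, \widetilde{\lambda}, t, \{s_i\}, \{q_i\}, \mathbf{M}, \mathbf{v})$ and the parameter vector to be $x := (x_0, \Lambda)$, with the feasibility region identified as $\mathcal{M}$. Compactness and convexity of $\mathcal{M}$ hold by construction (the artificial bounding box from the remark following Theorem \ref{convex reformulation thm}), so that hypothesis of the lemma is immediate.

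The first substantive step is to verify joint continuity of the cost and all constraint functions on $\mathcal{M} \times K^*$. The cost term $\lambda\varepsilon(\Lambda) = \lambda\sigma_{\max}(\Lambda)\varepsilon$ is continuous in $\Lambda$ because the maximum-eigenvalue functional is continuous on symmetric matrices. Each constraint is affine in the decision variables for fixed $\Lambda$, and its $\Lambda$-dependence enters through the scalar $\varepsilon(\Lambda)$ and the weighted dual norm $\|\cdot\|_{\Lambda^{-1}}$. Because $\mathcal{A}$ confines $\Lambda$ to the set of positive definite matrices whose eigenvalues are uniformly bounded away from zero, the inversion $\Lambda \mapsto \Lambda^{-1}$ and the induced norm $\|\cdot\|_{\Lambda^{-1}}$ are continuous, yielding the joint continuity required.

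The next step is the convexity and quasiconvexity verification. Convexity in $z$ (for fixed $x$) of each constraint follows directly from the affine-plus-norm structure of \eqref{convex reformulation}: each inequality is either an affine bound or a second-order-cone-type bound $\|\cdot\|_{\Lambda^{-1}} \le \lambda$, which is convex in $(z,\lambda)$ for fixed $\Lambda$. The cost function is linear in $z$, hence only weakly quasiconvex, so strict quasiconvexity does not hold in the strict sense. However, the role of the strict-quasiconvexity hypothesis in Lemma \ref{general continuity lemma} is solely to enforce single-valuedness of $z^*(x)$, and Assumption \ref{uniqueness} already supplies this single-valuedness directly. Invoking the lemma then yields continuity of $z^*(x_0,\Lambda)$ on $K^*$, from which the claim follows.

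The main obstacle, I anticipate, lies in the convexity-in-$\Lambda$ component of the lemma's hypothesis: the mapping $\Lambda \mapsto \Lambda^{-1}$ is not jointly convex, so $\|\cdot\|_{\Lambda^{-1}}$ does not enter convexly in $\Lambda$. If a strict reading of the lemma requires joint convexity in the parameter, I would bypass this issue by appealing to Berge's maximum theorem as a backup route: the continuous dependence of each constraint function on $\Lambda$, together with a Slater-type interior point inside $\mathcal{M}$, delivers both inner and outer semicontinuity of the feasible correspondence $R(x_0,\Lambda)$; combined with joint continuity of the cost and the single-valuedness granted by Assumption \ref{uniqueness}, this alternative argument establishes continuity of the solution mapping with respect to $(x_0,\Lambda)$ and thus completes the proof.
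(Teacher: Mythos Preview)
Your primary route parametrizes by $(x_0,\Lambda)$ and then stalls on the convexity-in-parameter hypothesis of Lemma~\ref{general continuity lemma}; you correctly diagnose this obstacle. The paper's proof resolves it not by abandoning the lemma but by \emph{reparametrizing}: it takes the parameter to be $(x_0,\Lambda^{-1})$ rather than $(x_0,\Lambda)$. With this change, the two $\Lambda$-dependent ingredients in the constraints become convex in the parameter. First, $\|c\|_{\Lambda^{-1}} = \|\Lambda^{-1}c\|$ is the composition of the linear map $X\mapsto Xc$ with the Euclidean norm, hence convex in $\Lambda^{-1}$. Second, $\varepsilon(\Lambda)=\sigma_{\max}(\Lambda)\,\varepsilon=\varepsilon/\sigma_{\min}(\Lambda^{-1})$ is convex in $\Lambda^{-1}$ because $\sigma_{\min}$ is concave on $\mathbb{S}_{++}^d$ and $t\mapsto 1/t$ is convex and decreasing on $(0,\infty)$. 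Since no constraint couples $x_0$ and $\Lambda^{-1}$, separate convexity in each parameter suffices, and Lemma~\ref{general continuity lemma} applies directly to yield continuity in $(x_0,\Lambda^{-1})$. The final step composes with the continuous map $\Lambda\mapsto\Lambda^{-1}$ to recover continuity in $\Lambda$. This reparametrization is the idea missing from your primary argument.

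Your backup via Berge's maximum theorem is a genuinely different route and is in principle sound, but it trades the convexity verification for a Slater-type lower-semicontinuity check on the feasible correspondence $R(x_0,\Lambda)$ that you have only asserted, not carried out; the paper avoids this entirely through the reparametrization. On the strict-quasiconvexity issue, your observation that Assumption~\ref{uniqueness} already supplies single-valuedness, and hence plays the role of that hypothesis in the proof of Lemma~\ref{general continuity lemma}, is correct and in fact slightly more careful than the paper's own handling, which simply asserts that linearity of the objective suffices.
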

\begin{proof}
    See Appendix C.
\end{proof}

The outer-level cost and constraint functions in \eqref{SAA of outer-loop problem} are obtained by evaluating linear functionals of the closed-loop trajectory generated by the inner DRC problem. Consequently, we prove in Proposition \ref{prop: rigorous lipschitz of H} that the Lipschitz continuity of the inner solution directly translates into the regularity of the augmented Lagrangian function.

\begin{proposition}[local Lipschitzness]
\label{prop: rigorous lipschitz of H}
    The augmented Lagrangian function $H$ in (\ref{augmented lagrangian}) is locally Lipschitz continuous in variables $\Lambda,\alpha,\kappa,\mu,\nu$. That is, for each reference point $\bar{\Lambda}(\text{or }\bar{\alpha},\bar{\kappa},\bar{\mu},\bar{\nu})$, there exists a neighborhood $\mathcal{V}$ of $\bar{\Lambda}(\text{or }\bar{\alpha},\bar{\kappa},\bar{\mu},\bar{\nu})$ such that $H$ is Lipschitz continuous in $\mathcal{V}$.
\end{proposition}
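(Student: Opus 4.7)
The plan is to establish local Lipschitzness of $H$ by tracking how the regularity of the inner DRC solution $v^*_{\mathrm{DRC}}(x,\Lambda)$, already secured by Theorem~\ref{lipschitz of optimal solution}, propagates through each closed-loop rollout and then combines, first through the trajectory-level objectives $F$ and $G$, and finally through the multiplier and penalty terms in the augmented Lagrangian. Throughout, I would exploit the fact that $\widetilde{\mathcal{M}}$ (and the underlying $\mathcal{A}$) confines each variable to a compact set, so that bounded neighborhoods of any reference point can be treated with uniform Lipschitz constants.

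Fix an arbitrary reference point $(\bar\Lambda,\bar\alpha,\bar\kappa,\bar\mu,\bar\nu)\in\widetilde{\mathcal{M}}$ and a closed bounded neighborhood $\mathcal{V}$ of it. The first step is to propagate regularity through the closed-loop rollout. For each realization $(x_0^i,\mathbf{w}^i)$, define $x_t^i(\Lambda)$ and $u_t^i(\Lambda)=v^*_{\mathrm{DRC}}(x_t^i(\Lambda),\Lambda)$ by the system dynamics. Theorem~\ref{lipschitz of optimal solution} yields continuous dependence of $v^*_{\mathrm{DRC}}$ on $(x,\Lambda)$ over the compact feasible region determined by $\mathcal{A}$ and $\mathcal{M}$; since this region is compact, the continuity upgrades to local Lipschitzness with a uniform constant. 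A short induction on $t$, using the linear recursion $x_{t+1}^i=Ax_t^i+Bu_t^i+w_t^i$ and the Lipschitz bound on $v^*_{\mathrm{DRC}}$, then gives that each $x_t^i(\Lambda)$ and $u_t^i(\Lambda)$ is locally Lipschitz on $\mathcal{V}$ with a (horizon-dependent) constant.

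With this in hand, the mapping $\Lambda\mapsto F(\Lambda)$ is a finite average of pointwise maxima of affine combinations of the trajectory variables $(x_t^i,u_t^i)$; since finite $\max$ and affine operations preserve Lipschitz continuity, $F$ is locally Lipschitz in $\Lambda$. Similarly, $G(\Lambda,\alpha)$ is obtained by composing the piecewise-affine constraint $g$ with a finite $\max$ over the horizon, the $1$-Lipschitz truncation $[\cdot]_+$, and an affine function of $\alpha$ — each operation preserving joint local Lipschitzness — so $G$ is locally Lipschitz in $(\Lambda,\alpha)$. On the compact $\mathcal{V}$ the multipliers $\mu,\nu$ and the outputs of $F,G$ remain uniformly bounded; hence the products $\mu(G+\kappa)$ and $\tfrac{\nu}{2}\|G+\kappa\|^2$ are local Lipschitz (bounded Lipschitz factors, composed with the smooth squaring map) in the joint argument, and summing with $F$ preserves this property. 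This delivers local Lipschitzness of $H$ in each of $\Lambda,\alpha,\kappa,\mu,\nu$, hence jointly.

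The main obstacle I anticipate is the first step: showing that the local Lipschitz constant of the inner DRC solution, which Theorem~\ref{lipschitz of optimal solution} only phrases in terms of continuity, does not degenerate when compounded through $L$ receding-horizon solves whose linearization points $x_t^i(\Lambda)$ themselves depend on $\Lambda$. Handling this cleanly requires the compactness built into $\mathcal{A}$ and $\mathcal{M}$, so that a single uniform modulus can be extracted and then iterated through the linear dynamics to yield an explicit, horizon-dependent but finite, Lipschitz bound on the rollout. The remaining compositional steps are routine once this uniform control is in place, which is why I would state and prove them only after the rollout bound is secured.
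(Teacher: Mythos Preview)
Your approach is essentially the same as the paper's: invoke Theorem~\ref{lipschitz of optimal solution} for regularity of the inner DRC solution, propagate it recursively through the linear closed-loop dynamics, and then use that $F$ and $G$ are built from affine pieces, pointwise $\max$, and the $1$-Lipschitz truncation $[\cdot]_+$ to conclude local Lipschitzness in $\Lambda$ and $\alpha$; the remaining variables $\mu,\nu,\kappa$ enter only linearly or quadratically. The paper organizes the argument variable by variable (linear in $\mu,\nu$; quadratic in $\kappa$; piecewise linear in $\alpha$; Theorem~\ref{lipschitz of optimal solution} plus rollout for $\Lambda$), but the content is the same.

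One point to tighten: your sentence ``since this region is compact, the continuity upgrades to local Lipschitzness with a uniform constant'' is not a valid inference in general---continuity on a compact set yields uniform continuity, not Lipschitz continuity. You correctly flag this as the main obstacle later, but the paper does not bridge it via compactness either; it simply \emph{reads} Theorem~\ref{lipschitz of optimal solution} as delivering local Lipschitz continuity of the solution map (consistent with its label), and then uses compactness of $\mathcal{A}$ only to pass from local to global Lipschitz constants. You should adopt the same reading rather than attempt to manufacture a Lipschitz modulus from mere continuity. Once that is granted, your inductive rollout bound and the compositional steps go through exactly as you describe.
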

\begin{proof}
    See Appendix D.
\end{proof}

Beyond local regularity of the augmented Lagrangian function, we can further prove that the trajectory realizations in the outer level exhibit Lipschitz continuous dependence on both the initial state $x_0$ and the metric matrix $\Lambda$. This property is formalized in Theorem \ref{thm: Lipschitz of random functions}.

To simplify the sequel, we define the shorthand notations for functions in the proposed bilevel program \eqref{original outer-loop problem} as follows,
\begin{equation}
    \begin{aligned}
        \widetilde{F}(\Lambda,x_0,\mathbf{w})&:= \max_{1\leq j\leq n_j} \widetilde{a}_j^\top \begin{bmatrix} x_1 \\ \vdots \\x_L \end{bmatrix}+\widetilde{b}_j^\top \begin{bmatrix} x_0 \\ u_0\\ \vdots \\ u_{L-1} \end{bmatrix}+\widetilde{c}_j \\
        \widetilde{G}(\Lambda,\alpha,x_0,\mathbf{w})&:= \frac{\left[\max_{t\in\{0,\ldots,L\}} g\left(x_t^{i},u_t^i\right)-\alpha\right]_+}{\eta}+\alpha
    \end{aligned}
    \label{orginal objective and constraint function in outer-level problem}
\end{equation}
where $\alpha$ is the auxiliary variable introduced by the CVaR definition \eqref{CVaR definition}.

\begin{theorem}
\label{thm: Lipschitz of random functions}
    For every fixed disturbance sequence $\mathbf{w}$, the objective and constraint functions of the outer-level metric learning problem \eqref{original outer-loop problem} are Lipschitz continuous in $(x_0,\Lambda)$. In particular, there exist constants $L_1,L_2,L_1’,L_2’>0$ such that for all $x_0^1, x_0^2$ and $\Lambda_1, \Lambda_2$,
    \begin{align}
    \label{eq: continuty of tildeF}
    &\left|\widetilde{F}(x_0^1,\Lambda_1,\mathbf{w})
    -\widetilde{F}(x_0^2,\Lambda_2,\mathbf{w})\right| \notag\\
    &\leq L_1\|\Lambda_1-\Lambda_2\|
       +L_2\|x_0^1-x_0^2\|, \\[2em]
    \label{eq: continuty of tildeG}
    &\left|\widetilde{G}(x_0^1,\Lambda_1,\mathbf{w})
    -\widetilde{G}(x_0^2,\Lambda_2,\mathbf{w})\right| \notag\\
    &\leq L_1'\|\Lambda_1-\Lambda_2\|
       +L_2'\|x_0^1-x_0^2\|.
    \end{align}
    \label{outer-loop problem bounded by integrable function}
\end{theorem}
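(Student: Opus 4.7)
The plan is to propagate Lipschitz continuity from the inner DRC solution mapping through the closed-loop rollout, and then through the outer-level piecewise-affine and positive-part compositions that define $\widetilde{F}$ and $\widetilde{G}$. Fixing the disturbance sequence $\mathbf{w}$ is crucial: it eliminates the only source of $(\Lambda,x_0)$-independent stochasticity, so the whole closed-loop trajectory becomes a deterministic function of $(\Lambda,x_0)$ generated by iterating $x_{t+1}=Ax_t+Bv^*_{\mathrm{DRC}}(x_t,\Lambda)+w_t$.

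First, I would invoke Theorem~\ref{lipschitz of optimal solution} to conclude that the first-step controller $v^*_{\mathrm{DRC}}(x_t,\Lambda)$ is Lipschitz continuous in $(x_t,\Lambda)$ on the compact feasible set. Since $\mathcal{A}$ and the auxiliary bounding set $\widetilde{\mathcal{M}}$ are compact by construction, this yields Lipschitz constants $\ell_x,\ell_\Lambda>0$ that are uniform on the feasible parameter domain. Second, I would proceed by induction on $t\in\{0,1,\dots,L\}$ to show that the rollout states $x_t$ and inputs $u_t=v^*_{\mathrm{DRC}}(x_t,\Lambda)$ are jointly Lipschitz in $(\Lambda,x_0)$. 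The base case is immediate. For the inductive step, taking differences between two parameter pairs $(\Lambda_1,x_0^1)$ and $(\Lambda_2,x_0^2)$ under the \emph{same} fixed $\mathbf{w}$ makes the disturbance term cancel in $x_{t+1}^1-x_{t+1}^2$, leaving $\|x_{t+1}^1-x_{t+1}^2\|\le \|A\|\,\|x_t^1-x_t^2\|+\|B\|\,\|u_t^1-u_t^2\|$ together with $\|u_t^1-u_t^2\|\le \ell_x\|x_t^1-x_t^2\|+\ell_\Lambda\|\Lambda_1-\Lambda_2\|$. Iterating this coupled recursion over the finite horizon $L$ produces uniform Lipschitz constants $c_x,c_\Lambda$ for the entire stacked trajectory $(x_1,\dots,x_L,u_0,\dots,u_{L-1})$ in the arguments $(x_0,\Lambda)$.

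Finally, I would close the proof by exploiting the simple outer-level structure. The function $\widetilde{F}$ is a maximum of affine functionals of the trajectory, and the max operation preserves Lipschitz continuity; composing with the Lipschitz trajectory map then produces \eqref{eq: continuty of tildeF} with explicit constants $L_1,L_2$ determined by $\widetilde{a}_j,\widetilde{b}_j$ and $(c_x,c_\Lambda)$. For $\widetilde{G}$, the inner function $g(\mathbf{y},\mathbf{z})$ is itself a max of affine functions of the trajectory and hence Lipschitz in it; the outer operations, namely the $1$-Lipschitz positive part $[\,\cdot-\alpha\,]_+$, division by the fixed constant $\eta$, and the additive $\alpha$ term, all preserve Lipschitz continuity and yield \eqref{eq: continuty of tildeG} with constants $L_1',L_2'$ obtained analogously.

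The main obstacle is ensuring that the Lipschitz constants from Theorem~\ref{lipschitz of optimal solution} are \emph{uniform} over the feasible parameter set so that the horizon induction terminates with finite $c_x,c_\Lambda$. This is secured by the bounded-box construction of $\mathcal{A}$ and $\widetilde{\mathcal{M}}$ together with Assumption~\ref{uniqueness} and the convex-cone structure of \eqref{convex reformulation}; some care is also required to verify that the intermediate rollout states $x_t$ remain inside the domain on which the inner controller is well defined, so that Theorem~\ref{lipschitz of optimal solution} can be invoked at every step of the recursion rather than only at $t=0$.
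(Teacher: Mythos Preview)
Your proposal is correct and follows essentially the same route as the paper: invoke Theorem~\ref{lipschitz of optimal solution} to get Lipschitz continuity of the inner DRC solution, upgrade local to global Lipschitzness via compactness of $\mathcal{A}$, propagate recursively through the closed-loop dynamics over the finite horizon (with the disturbance cancelling because $\mathbf{w}$ is fixed), and then push through the max-of-affine and positive-part structure of $\widetilde{F}$ and $\widetilde{G}$. The only cosmetic difference is that the paper splits the estimate via the triangle inequality into a ``$\Lambda$-part'' (with $x_0$ fixed) and an ``$x_0$-part'' (with $\Lambda$ fixed) and runs the recursion separately on each, whereas you carry the joint $(\Lambda,x_0)$ bound through a single induction; the underlying argument and the resulting constants are the same.
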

\begin{proof}
    See Appendix E.
\end{proof}

\subsection{Convergence of the proposed algorithm}
Based on the continuity properties established in the previous subsection, we now investigate the convergence behavior of the proposed two-level algorithm.
The analysis is carried out from three perspectives: (i) convergence of the proposed algorithm to a stationary point of the outer-level metric learning problem, (ii) asymptotic consistency of the stationary points, and (iii) non-asymptotic convergence rate analysis of the learned metric matrix. This comprehensive analysis provides a solid theoretical foundation for the proposed end-to-end metric-learning framework.

Firstly, to analyze the convergence of the proposed two-level stochastic augmented Lagrangian algorithm, we impose a mild condition on the step sizes used in the outer loop.
\begin{assumption}
    The step sizes $\{\delta^t\}$ satisfy 
    $$ \sum_{t=1}^\infty\delta^t=\infty,\quad\sum_{t=1}^\infty(\delta^t)^2<\infty,\quad\delta^t=o(1/\log(t)).
    $$
    \label{stepsize}
\end{assumption}

Next, recall that the outer-level metric learning problem involves nonsmooth objectives and CVaR-type constraints inherited from the inner DRC problem \eqref{MPC}. To characterize optimality in this nonsmooth setting, we rely on the notion of generalized KKT conditions based on Clarke generalized Jacobians \cite{nondifferentiable-KKT}. This allows us to properly define stationary points of the outer-level problem when the mappings are only Lipschitz continuous. The definition is given below.
\begin{definition}
    Consider the following nonsmooth optimization problem,
    $$
    \begin{aligned}
        \min\quad &f(x) \\
        \text{s.t.}\quad &h_i(x) = 0, i=1,\ldots,m_0
    \end{aligned}
    $$
    where all functions are locally Lipschitz.
    A point $x^*$ is said to satisfy the generalized KKT condition if there exists a nonzero multiplier vector $\lambda = (\lambda_1, \ldots, \lambda_{m_0})^\top \in \mathbb{R}^{m_0} \setminus \{\mathbf{0}\}$ such that
    $$0\in\partial^c f(x^*) + \sum_{i=1}^{m_0}\lambda_i \partial^ch_i(x^*).$$
\end{definition}

We are now ready to present the convergence guarantee of the proposed algorithm in Theorem \ref{thm: convergence of algorithm}.
\begin{theorem}
\label{thm: convergence of algorithm}
    Suppose the disturbance sample size is $N$, and Assumption \ref{uniqueness} and \ref{stepsize} hold. If the sequence of penalty variables $\{\nu^k\}$ remains bounded, then the cluster point $\Lambda_N$ of the generated sequence $\{\Lambda^k\}$ is feasible to the empirical outer-level problem in \eqref{sample-based problem}. Otherwise, the cluster point of the sequence $\{(\Lambda^k, \alpha^k, \kappa^k)\}$ is a stationary point of the following minimization problem.
    \begin{equation}
            \begin{aligned}
        \min\quad &\|G(\Lambda,\alpha)+\kappa\|^2 \\
        \text{s.t.}\quad & \Lambda\in\mathcal{A} \label{constraint-violation problem}
    \end{aligned}
    \end{equation}
    Moreover, the accumulation point $(\Lambda_N,\alpha_N,\kappa_N)$ satisfies the generalized KKT condition of problem~\eqref{SAA of outer-loop problem}, that is,
    $$
    0 \in \partial^c F(\Lambda_N) + \mu\cdot\partial^c \left(G(\Lambda_N,\alpha_N) + \kappa_N\right).
    $$
    where the Clarke generalized Jacobians is taken with respect to the variables $\Lambda,\alpha,\kappa$.
\end{theorem}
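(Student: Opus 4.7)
The plan is to decouple the analysis along the two levels of Algorithm~\ref{outer-loop algorithm}: the inner loop is treated as a projected stochastic subgradient method applied to the augmented Lagrangian $H(\cdot,\mu^k,\nu^k)$ with $(\mu^k,\nu^k)$ frozen, while the outer loop governs the safeguarded updates of the multiplier and penalty. The required regularity inputs are already in place: Proposition~\ref{prop: rigorous lipschitz of H} provides local Lipschitz continuity of $H$, Proposition~\ref{prop: differentiability of H} together with Theorem~\ref{thm: differentiability of DRC} furnishes a conservative Jacobian $J_H$, and Theorem~\ref{thm: Lipschitz of random functions} gives sample-wise Lipschitz bounds ensuring that the mini-batch estimator constructed in Algorithm~\ref{inner-loop algorithm} is an unbiased, variance-controlled surrogate for an element of $J_H$. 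With these inputs, I would invoke the established convergence theory of projected stochastic subgradient methods on path-differentiable, locally Lipschitz objectives; under Assumption~\ref{stepsize} and compactness of $\widetilde{\mathcal{M}}$, the inner iterates $(\Lambda_t,\alpha_t,\kappa_t)$ approach the set of Clarke-stationary points of $H(\cdot,\mu^k,\nu^k)$ over $\widetilde{\mathcal{M}}$ almost surely. Consequently, each outer-iteration output $(\Lambda^k,\alpha^k,\kappa^k)$ satisfies, up to an asymptotically vanishing residual, the inclusion $0\in\partial^c F(\Lambda^k) + \bigl(\mu^k+\nu^k(G^k+\kappa^k)\bigr)\,\partial^c\bigl(G(\Lambda^k,\alpha^k)+\kappa^k\bigr) + \mathcal{N}_{\widetilde{\mathcal{M}}}(\Lambda^k,\alpha^k,\kappa^k)$.

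With this inclusion available along the outer iterations, the proof splits according to the dichotomy on $\{\nu^k\}$. If $\{\nu^k\}$ is bounded, the penalty-inflation branch of Algorithm~\ref{outer-loop algorithm} is triggered only finitely often, so from some $k_0$ onward the safeguard $\|G^k+\kappa^k\|_2\le \tau\|G_{\text{best}}\|_2$ holds at every iteration; by induction this drives $\|G^k+\kappa^k\|_2\to 0$ geometrically, and since $\kappa^k\ge 0$, every cluster point $\Lambda_N$ is feasible for the empirical problem~\eqref{sample-based problem}. The KKT inclusion then follows by extracting a convergent subsequence of $\mu^k\in[\mu_{\min},\mu_{\max}]$, noting that $\nu^{k-1}(G^k+\kappa^k)\to 0$ since the safeguard keeps $\nu^{k-1}$ bounded and drives the residual to zero, and passing to the limit in the inner-loop inclusion via upper semicontinuity of the Clarke generalized Jacobian. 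In the unbounded case $\nu^k\to\infty$, I would instead divide the inclusion by $\nu^k$; both the $\partial^c F$ term and the $\mu^k/\nu^k$ prefactor vanish in the limit, leaving $0\in (G(\Lambda_N,\alpha_N)+\kappa_N)\,\partial^c(G(\Lambda_N,\alpha_N)+\kappa_N)+\mathcal{N}_{\mathcal{A}}$, which is exactly the Clarke-stationarity condition for the constraint-violation surrogate~\eqref{constraint-violation problem}.

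The main technical obstacle I foresee is the faithful passage between the conservative Jacobians produced by the recursion in Algorithm~\ref{inner-loop algorithm}, which propagate through the implicit controller map $v^*_{\mathrm{DRC}}(\cdot,\Lambda)$ via Theorem~\ref{thm: differentiability of DRC}, and the Clarke generalized Jacobians appearing in the statement of the generalized KKT condition. Although a conservative Jacobian always contains $\partial^c$ for path-differentiable mappings, obtaining a genuine \emph{Clarke}-stationary limit requires careful use of the composition calculus for conservative mappings (Lemma~1), the safeguard-enforced boundedness of $\{\mu^k\}$, and the non-degeneracy of $\widetilde{\mathcal{M}}$ at any cluster point, ensured by choosing its bounds so that no artificial box constraint becomes active at the limit. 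Controlling the stochastic noise across the implicit convex conic program~\eqref{convex reformulation} under Assumption~\ref{uniqueness}, so that mini-batch elements of $J_H$ are uniformly integrable and satisfy the hypotheses of the underlying stochastic approximation theorem, is the linchpin that ties all of these ingredients together.
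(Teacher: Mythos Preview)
Your proposal is correct and follows essentially the same architecture as the paper's proof: first establish that, with $(\mu^k,\nu^k)$ fixed, the inner loop is a stochastic subgradient method on the path-differentiable augmented Lagrangian whose iterates converge to Clarke-critical points of $H$ (the paper invokes Theorems~1 and~3 of \cite{differentiable-optimization-2} directly to bridge conservative and Clarke subdifferentials via the almost-everywhere coincidence result), then run the bounded/unbounded dichotomy on $\{\nu^k\}$, obtaining feasibility from the geometric contraction of $\|G^k+\kappa^k\|$ in the bounded case and, after dividing the inclusion by $\nu^k$, stationarity of~\eqref{constraint-violation problem} in the unbounded case; the KKT inclusion then follows by passing to the limit along a convergent subsequence of the bounded multipliers. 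Your treatment is in fact more careful than the paper's on two points---the explicit normal-cone term and the conservative-versus-Clarke passage---but the route is the same.
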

\begin{proof}
    See Appendix F.
\end{proof}


In the proposed end-to-end DRC setting, the anisotropy metric matrix $\Lambda_N$ learned from finite samples may not satisfy the exact generalized KKT condition of the true outer-level metric learning problem. Instead, what can be established is that the empirical stationary points converge to a relaxed condition in which the generalized Jacobians are taken in expectation. This weak Clarke stationarity ensures the statistical consistency of our learned metric.

To set the stage for this analysis, we first rewrite the outer-level metric learning problem in a compact form below.
\begin{equation} \label{stochastic outer level problem}
\begin{aligned}
\min_{(\Lambda,\alpha,\kappa) \in \widetilde{\mathcal{M}}} \quad & \mathbb{E}\left[\widetilde{F}(\Lambda,\xi)\right] \\
\text{s.t.} \quad & \mathbb{E}\left[\widetilde{G}(\Lambda,\alpha,\xi) + \kappa\right] = 0.
\end{aligned}
\end{equation}
where the random variable $\xi$ compactly represents the random initial condition $x_0$ and the disturbance sequence $\mathbf{w}$. 

\begin{theorem}
\label{thm: consistency}
Denote by $(\Lambda_N, \alpha_N, \kappa_N)$ a generalized KKT point of the empirical metric learning problem \eqref{SAA of outer-loop problem} constructed from $N$ i.i.d. samples. Under the continuity properties stated in Theorem 5 and 6, every accumulation point $(\Lambda^*, \alpha^*, \kappa^*)$ of the sequence $\{(\Lambda_N, \alpha_N, \kappa_N)\}$ satisfies the following weak generalized KKT condition almost surely.
$$0 \in \mathbb{E}\left[\partial^c \widetilde{F}(\Lambda^*,\xi)\right]+\mu \mathbb{E}\left[\partial^c \left(\widetilde{G}(\Lambda^*,\alpha^*,\xi)+\kappa^*\right)\right] ,$$
where the Clarke generalized Jacobians are taken with respect to the primal variables $\tau := (\Lambda, \alpha, \kappa)$, and $\mu$ is the Lagrange multiplier of the equality constraint.
\end{theorem}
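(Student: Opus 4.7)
The plan is to start from the finite-sample KKT condition established in Theorem~\ref{thm: convergence of algorithm} and pass to the limit as $N\to\infty$ by combining a uniform law of large numbers with outer semicontinuity of the Clarke subdifferential. Concretely, for each $N$ the point $(\Lambda_N,\alpha_N,\kappa_N)$ together with some multiplier $\mu_N$ satisfies
$$
0 \in \partial^c F_N(\Lambda_N) + \mu_N \,\partial^c\bigl(G_N(\Lambda_N,\alpha_N)+\kappa_N\bigr),
$$
where $F_N,G_N$ denote the empirical averages of $\widetilde F(\cdot,\xi_i)$ and $\widetilde G(\cdot,\cdot,\xi_i)$ over $N$ i.i.d.\ samples $\{\xi_i\}_{i=1}^N$. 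Passing to a subsequence, I may assume $(\Lambda_N,\alpha_N,\kappa_N)\to(\Lambda^*,\alpha^*,\kappa^*)\in\widetilde{\mathcal{M}}$ by compactness of the bounding box, and (after another subsequence extraction, with a separate argument to rule out the divergent case) $\mu_N\to\mu$.

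The first key ingredient is the interchange between Clarke subdifferentiation and expectation. Theorem~\ref{thm: Lipschitz of random functions} provides, for each compact subset of $\widetilde{\mathcal{M}}$, random Lipschitz constants for $\widetilde F(\cdot,\xi)$ and $\widetilde G(\cdot,\cdot,\xi)$ whose integrability can be verified from the dependence of the bounds derived in Appendix~E on $\xi$. This domination places us in the setting of Clarke's subdifferential-integration rule (see, e.g., Clarke, \emph{Optimization and Nonsmooth Analysis}, Thm.~2.7.2), which yields
$$
\partial^c F(\Lambda) \subseteq \mathbb{E}\bigl[\partial^c \widetilde F(\Lambda,\xi)\bigr],\qquad
\partial^c G(\Lambda,\alpha) \subseteq \mathbb{E}\bigl[\partial^c \widetilde G(\Lambda,\alpha,\xi)\bigr],
$$
so it suffices to show the accumulation point satisfies the ordinary generalized KKT condition $0\in\partial^c F(\Lambda^*)+\mu\,\partial^c(G(\Lambda^*,\alpha^*)+\kappa^*)$ for the true stochastic problem.

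The second ingredient is a uniform law of large numbers. The integrable Lipschitz domination from Theorem~\ref{thm: Lipschitz of random functions}, together with the compactness of $\widetilde{\mathcal{M}}$, implies that the class $\{\widetilde F(\cdot,\xi)\}$ and $\{\widetilde G(\cdot,\cdot,\xi)\}$ are Glivenko--Cantelli, so $F_N\to F$ and $G_N\to G$ almost surely \emph{uniformly} on $\widetilde{\mathcal{M}}$, with uniformly bounded Lipschitz constants. Under such uniform convergence of locally Lipschitz functions, the Clarke subdifferential mapping is outer semicontinuous in the graphical sense: if $\zeta_N\in\partial^c F_N(\Lambda_N)$ and $\Lambda_N\to\Lambda^*$, any cluster point of $\{\zeta_N\}$ lies in $\partial^c F(\Lambda^*)$ (this is the standard closed-graph property obtained by combining uniform convergence with the definition of $\partial^c$ as the convex hull of limits of gradients on a dense differentiability set). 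Applying this to the empirical KKT inclusion and extracting weak limits of the subgradient vectors (which are bounded by the uniform Lipschitz constants) produces
$0\in\partial^c F(\Lambda^*)+\mu\,\partial^c(G(\Lambda^*,\alpha^*)+\kappa^*)$, and chaining with the subdifferential-integration inclusions above gives the desired weak condition.

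The main obstacle I anticipate is rigorously justifying the graphical outer semicontinuity step, because it requires controlling simultaneously the perturbation in the functions ($F_N\rightrightarrows F$) and in the base points ($\Lambda_N\to\Lambda^*$). Handling this cleanly needs either an epi-convergence argument combined with the Attouch-type stability of Clarke subdifferentials for locally Lipschitz functions, or a direct uniform LLN for the subgradient multifunction using the integrable Lipschitz envelope from Theorem~\ref{thm: Lipschitz of random functions}. A secondary but nontrivial technical step is ensuring boundedness of the multiplier sequence $\{\mu_N\}$: this can be done under a standard constraint qualification (e.g., Mangasarian--Fromovitz with Clarke subgradients) at $(\Lambda^*,\alpha^*,\kappa^*)$, which I would verify using the slack-variable reformulation in \eqref{SAA of outer-loop problem} that makes $\kappa$ an independent variable with nontrivial subgradient in the equality constraint.
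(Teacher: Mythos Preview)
Your primary route---uniform LLN for the function values followed by ``outer semicontinuity of the Clarke subdifferential under uniform convergence''---does not go through. The closed-graph statement you invoke is false for Clarke subdifferentials of nonconvex functions: take $F_N(x)=\tfrac{1}{N}\sin(Nx)$, which are uniformly $1$-Lipschitz and converge uniformly to $F\equiv 0$, yet $\partial^c F_N(0)=\{1\}$ for every $N$ while $\partial^c F(0)=\{0\}$. Equi-Lipschitzness plus uniform (or even epi-) convergence of locally Lipschitz functions does \emph{not} give graphical convergence of their Clarke subdifferentials; Attouch's theorem, which you allude to, applies to convex (or prox-regular) functions, not to general Clarke subdifferentials, so that escape route is closed as well.

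The paper proceeds essentially by your second alternative, a uniform LLN at the level of the subgradient multifunction, but organizes it differently. It packages the Lagrangian subgradients over all admissible multipliers into a single set-valued map $\Phi(\tau,\xi):=\mathrm{conv}\bigl\{\bigcup_{\mu\in Q(\tau,\xi)}\partial^c\widetilde{\mathcal L}(\tau,\xi,\mu)\bigr\}$, so the finite-sample KKT condition reads $0\in\Phi_N(\tau_N)+\mathcal N_{\widetilde{\mathcal M}}(\tau_N)$ with $\Phi_N=\tfrac1N\sum_i\Phi(\cdot,\xi_i)$. The joint limit $(\tau_N\to\tau^*,\ N\to\infty)$ is handled by an $r$-enlargement: since $\tau_N$ eventually lies in $\mathcal B_r(\tau^*)$, one has $\Phi_N(\tau_N)\subseteq\Phi_N^r(\tau^*):=\bigcup_{\tau'\in\mathcal B_r(\tau^*)}\Phi_N(\tau')$, and a set-valued uniform LLN (available because Theorem~\ref{thm: Lipschitz of random functions} bounds $\|\Phi(\tau,\xi)\|$ by an integrable envelope) gives $\mathbb D\bigl(\Phi_N^r(\tau^*),\mathbb E[\Phi^{2r}(\tau^*,\xi)]\bigr)\to 0$ a.s. Sending $r\downarrow 0$ then recovers $0\in\mathbb E[\Phi(\tau^*,\xi)]+\mathcal N_{\widetilde{\mathcal M}}(\tau^*)$. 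This bypasses the subdifferential-stability problem entirely, and also avoids your separate multiplier-compactness argument: boundedness of the multiplier set is built into $Q(\tau,\xi)\subseteq[\mu_{\min},\mu_{\max}]$ via the projection step in Algorithm~\ref{outer-loop algorithm}, so no constraint qualification is needed. Your Clarke integration inclusion $\partial^c F\subseteq\mathbb E[\partial^c\widetilde F]$ is correct and useful for interpreting the weak KKT condition, but it is not where the analytic work lies.
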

\begin{proof}
    See Appendix G.
\end{proof}

Having established in Theorem \ref{thm: consistency} the asymptotic convergence of the learned metric matrix to a stationary point of the original metric learning problem \eqref{original outer-loop problem}, we proceed to investigate its nonasymptotic convergence rate. This analysis quantifies how well the learned anisotropy metric matrix $\Lambda$ approximates the optimal solutions to \eqref{stochastic outer level problem} with finite samples. To this end, we introduce additional notations and assumptions required in our analysis.

To simplify the notations, we denote the stacked variables in the outer-level problem as $\tau := (\Lambda, \alpha, \kappa)$. We define the Lagrangian of the empirical metric learning problem \eqref{SAA of outer-loop problem} as
$\mathcal{L}(\tau, \mu) := F(\tau) + \mu \cdot (G(\tau) + \kappa)$,
and the random Lagrangian of the true metric learning problem as
$\widetilde{\mathcal{L}}(\tau, \mu, \xi):=\widetilde{F}(\tau,\xi) + \mu \cdot (\widetilde{G}(\tau,\xi) + \kappa)$.

Subsequently, we define a set which contains all possible generalized gradients of the Lagrangian function  
$\widetilde{\mathcal{L}}$ at all admissible Lagrange multipliers $\mu$. Specifically, we first define the set of admissible multipliers as $Q(\tau,\xi):=\{\mu:0\in\partial^c \widetilde{\mathcal{L}}(\tau,\mu,\xi), \tau \text{ feasible}\}$. We then define the corresponding Clarke generalized Jacobian set as follows.
$$\Phi(\tau,\xi):=\mathrm{conv}\left\{\bigcup_{\mu\in Q(\tau,\xi)}\partial^c \widetilde{\mathcal{L}}(\tau,\xi,\mu)\right\}.$$ 
Intuitively, the stationary condition of the true outer-level metric learning problem can be compactly expressed as the following equation,
$$0\in\mathbb{E} [\Phi(\tau,\xi)]+\mathcal{N}_{\widetilde{\mathcal{M}}}(\tau).$$
We define $\Phi^*$ as the ideal generalized stationary set satisfying this condition.
\begin{equation}
    \Phi^*=\{\tau:0\in\mathbb{E}[\Phi(\tau,\xi)]+\mathcal{N}_{\widetilde{\mathcal{M}}}(\tau)\}.
\end{equation}

Correspondingly, the stationary condition of the empirical metric learning problem is given by
\begin{equation}
    0 \in \Phi_N(\tau) + \mathcal{N}_{\widetilde{\mathcal{M}}}(\tau), 
\end{equation}
where $\Phi_N(\tau) := \frac{1}{N} \sum_{i=1}^N \Phi(\tau, \xi^i)$ with $\{\xi^i\}_{i=1}^N$ being i.i.d. samples from the underlying distribution.

With these notations in place, when the support set $\mathcal{X}_0$ and $\Xi$ are bounded, we can quantify the convergence rate of $\tau_N:=(\Lambda_N,\alpha_N,\kappa_N)$ to the stationary set $\Phi^*$ of \eqref{stochastic outer level problem}. The key idea is to relate the distance from $\tau_N$ to $\Phi^*$ to that between $\Phi_N(\tau)$ and $\mathbb{E}[\Phi(\tau,\xi)]$. The following assumptions establish the regularity conditions necessary for this rate analysis.

\begin{assumption}
\label{rate: relate point to set}
    There exist positive constants $\gamma,\zeta$ and $N^*$  such that for $N\geq N^*$, the SAA solution $\tau_N$ satisfies the following inequality with probability 1
    \begin{equation}
        d(\tau_N,\Phi^*)\leq \gamma \sup_{\tau\in\widetilde{\mathcal{M}}} \mathbb{D}\left(\Phi_N(\tau),\mathbb{E}[\Phi(\tau,\xi)]\right)^\zeta
    \end{equation}
\end{assumption}
Assumption \ref{rate: relate point to set} serves as a sensitivity bound \cite{convergence-rate}. It guarantees that if the approximation error in the generalized equation mapping is uniformly small, then the distance between the estimated and true stationary points is also small.

\begin{assumption}
\label{rate: piecewise holder continuity}
    The set-valued mapping $\Phi(\cdot, \xi)$ is a stochastic piecewise continuous mapping (Definition 4.1 in \cite{convergence-rate}), i.e., 
    \begin{equation}
    \begin{gathered}
    \Phi(\tau, \xi) = \bigcup_{j=1}^{J} \Phi_j(\tau, \xi), \text{with } \mathrm{dom}(\Phi_j):=\mathbb{T}_j
    \end{gathered}
    \end{equation}
    where each $\Phi_j(\cdot,\xi)$ is a set-valued continuous mapping defined over a region $\mathbb{T}_j$. 
    In addition, each piece $\Phi_j$ is Hölder continuous, i.e., there exist an integrable function $\widetilde{\kappa}: \widetilde{\Xi} \to \mathbb{R}^+$ and a constant $\widetilde{\gamma} > 0$ such that, for each $j$ and all $\tau,\tau'\in \mathbb{T}_j\cap \widetilde{\mathcal{M}}$, the following inequality holds.
    \begin{equation}
        \mathbb{H}(\Phi_j(\tau’, \xi), \Phi_j(\tau, \xi)) \leq \widetilde{\kappa}(\xi) \|\tau’ - \tau\|^{\widetilde{\gamma}}
    \end{equation}
    Specifically, we assume that the moment generating function of $\widetilde{\kappa}(\xi)$ is finite valued near zero.
\end{assumption}

This regularity condition in Assumption \ref{rate: piecewise holder continuity} above allows us to control how perturbations in $\tau$ translate into variations in $\Phi(\tau, \xi)$ and is crucial for applying empirical process theory in the convergence rate analysis.
With these assumptions in place, we can now state the main convergence rate result in Theorem \ref{thm: convergence rate}.

\begin{theorem}
    \label{thm: convergence rate}
    Suppose Assumption \ref{rate: relate point to set} and \ref{rate: piecewise holder continuity} hold. Furthermore, assume that the initial state set $\mathcal{X}_0$ and the support set $\Xi$ of the $T$-step disturbance sequence $\mathbf{w}$ are both bounded. Then, for any $\epsilon > 0$, there exist constants $c(\epsilon) > 0$ and $\beta(\epsilon) > 0$, independent of $N$, and a threshold $N^* > 0$ such that for all $N > N^*$, the following bound holds,
    \begin{equation}
        \mathrm{Prob}\{d(\tau_N,\Phi^*)\geq\epsilon\}\leq c(\epsilon)e^{-{\beta}(\epsilon)N}.
    \end{equation}
\end{theorem}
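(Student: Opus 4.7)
The starting point is Assumption~\ref{rate: relate point to set}: on the event $\{d(\tau_N,\Phi^*)\geq\epsilon\}$, the assumption forces
$$\sup_{\tau\in\widetilde{\mathcal{M}}}\mathbb{D}\bigl(\Phi_N(\tau),\mathbb{E}[\Phi(\tau,\xi)]\bigr)\ \geq\ (\epsilon/\gamma)^{1/\zeta}\ =:\ \delta(\epsilon),$$
so it suffices to derive an exponential tail for this uniform set-valued deviation. For each fixed $\tau$, boundedness of $\mathcal{X}_0$ and $\Xi$ together with Theorem~\ref{thm: Lipschitz of random functions} imply that the elements of $\Phi(\tau,\xi)$ are almost surely uniformly bounded, so a standard Cram\'er--Chernoff argument produces a pointwise bound
$$\mathrm{Prob}\bigl\{\mathbb{D}(\Phi_N(\tau),\mathbb{E}[\Phi(\tau,\xi)])\geq \delta/2\bigr\}\ \leq\ c_0 e^{-\beta_0 N\delta^2}.$$

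Next I would upgrade this pointwise control to a uniform one via a covering argument. Because $\widetilde{\mathcal{M}}$ is compact by construction, it admits a finite $\rho$-net $\{\tau^{(k)}\}_{k=1}^{K(\rho)}$ with $K(\rho)\leq C\rho^{-d}$, where $d$ is the ambient dimension of $\tau$. Using the piecewise decomposition $\Phi=\bigcup_{j\leq J}\Phi_j$ from Assumption~\ref{rate: piecewise holder continuity}, the H\"older property bounds the oscillation of each piece on $B(\tau^{(k)},\rho)\cap\mathbb{T}_j$ by $\widetilde{\kappa}(\xi)\rho^{\widetilde{\gamma}}$. Because the moment generating function of $\widetilde{\kappa}(\xi)$ is finite near zero, a second Cram\'er bound gives exponential concentration of $\tfrac{1}{N}\sum_i\widetilde{\kappa}(\xi^i)$ around $\mathbb{E}[\widetilde{\kappa}(\xi)]$, so choosing $\rho=(c\delta)^{1/\widetilde{\gamma}}$ makes the sample-averaged oscillation at most $\delta/2$ with exponentially high probability. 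A union bound over the $J K(\rho)$ net--piece pairs then yields
$$\mathrm{Prob}\Bigl\{\sup_{\tau\in\widetilde{\mathcal{M}}}\mathbb{D}(\Phi_N(\tau),\mathbb{E}[\Phi(\tau,\xi)])\geq\delta\Bigr\}\ \leq\ c(\delta) e^{-\beta(\delta)N},$$
and substituting $\delta=\delta(\epsilon)$ delivers the claimed rate.

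The main obstacle, as I see it, is reconciling the set-valued nature of the Clarke Jacobian mapping with empirical-process tools classically stated for real-valued random variables. Two technical points require care. First, the pointwise concentration must be phrased in terms of the Hausdorff-type deviation $\mathbb{D}$; this requires either appealing to a measurable-selection argument inside $\Phi(\tau,\xi)$ or invoking a set-valued law of large numbers with explicit exponential rates, both of which rely on the uniform boundedness supplied by compactness of $\mathcal{X}_0\times\Xi$. Second, the partition $\{\mathbb{T}_j\}_{j\leq J}$ need not align with the $\rho$-net, so points falling near piece boundaries must be handled separately; the finiteness of $J$ together with a simple union bound over pieces keeps this harmless. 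The remaining steps are standard Cram\'er--Chernoff computations under the finite-MGF assumption on $\widetilde{\kappa}(\xi)$.
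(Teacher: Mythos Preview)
Your approach is correct in outline and shares its starting point with the paper: both reduce via Assumption~\ref{rate: relate point to set} to a uniform large-deviation estimate for $\sup_{\tau\in\widetilde{\mathcal{M}}}\mathbb{D}(\Phi_N(\tau),\mathbb{E}[\Phi(\tau,\xi)])$, and both use the uniform boundedness of $\Phi$ coming from compactness of $\mathcal{X}_0\times\Xi$ together with Theorem~\ref{thm: Lipschitz of random functions}. The execution differs. You run a covering plus Cram\'er--Chernoff argument from first principles, whereas the paper, after a preparatory recombination step, invokes Theorem~3.2 of \cite{convergence-rate} directly on each reconstituted piece and then takes a finite maximum over index sets.

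That preparatory step is precisely where your sketch is thinnest. The domains $\mathbb{T}_j$ in Assumption~\ref{rate: piecewise holder continuity} live in the product space of $(\tau,\xi)$, not in $\tau$-space alone, so a fixed $\tau$ may meet several pieces as $\xi$ varies, and moving from a net point $\tau^{(k)}$ to a nearby $\tau$ can change the active piece for a given $\xi$; the H\"older bound therefore does not apply across that transition. The paper resolves this by a recombination: for each index set $J\subseteq\{1,\dots,\hat{j}\}$ it defines $\Phi^J(\tau,\xi)$ on a region $\Gamma^J$ of $\tau$-space so that $\Phi^J(\tau,\cdot)$ is continuous almost everywhere and $\mathbb{E}[\Phi^J(\cdot,\xi)]$ is continuous on $\bar{\Gamma}^J$; the exponential bound is obtained for each $J$ and combined by a finite maximum over the essential index sets. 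Your union bound over ``$JK(\rho)$ net--piece pairs'' implicitly treats the pieces as fixed $\tau$-regions and would need this index-set recombination (or an equivalent measurable decomposition of $\widetilde{\Xi}$ for each $\tau$) to be made rigorous. Once that is in place, your direct argument and the paper's invocation of the external theorem are essentially the same proof; yours is more self-contained, the paper's is shorter.
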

\begin{proof}
    See Appendix H.
\end{proof}

This result establishes an exponential concentration bound on the distance between the learned anisotropy metric matrix and the stationary set of the outer-level metric learning problem \eqref{stochastic outer level problem}, thereby confirming the statistical robustness of our end-to-end metric learning approach in the finite-sample regime.

\section{Case Study}



To evaluate the performance of the proposed end-to-end learning based control framework (E2E-Regionwise-DRC), we conduct experiments on two representative control problems: a linear stochastic control problem and a classical inventory control problem. We compare E2E-Regionwise-DRC with two baseline methods. The first is a conventional distributionally robust controller (W-DRC) which employs a conventional Wasserstein ambiguity set, as in equation \eqref{DR-MPC}. The second is a learning-based variant of the proposed method (E2E-Pointwise-DRC) where the anisotropy metric matrix $\Lambda$ is trained with a fixed initial state. This setting mirrors common practices in the existing end-to-end control literature.

\subsection{Numerical experiments}
To validate the proposed framework, we begin with a numerical experiment on a linear stochastic system. The system dynamics are given by the following equation,
\begin{equation}
    x^+ = \begin{bmatrix}
        0.95 & -0.02 \\ 0 &0.2 
    \end{bmatrix} x + \begin{bmatrix}
        0.5 \\ -0.01
    \end{bmatrix}u +w
\end{equation}
The state is subject to the following safety constraints, 
\begin{equation}
    x_1\leq 20,x_2\geq -3.2 ,
\end{equation}
which are enforced as distributionally robust CVaR constraints in the controller formulation with a violation probability threshold of 10\%.

We adopt a receding horizon control strategy with a control horizon of five. The underlying distribution of the additive disturbance is $w\sim \mathcal{N}(0,2I_2)$. To construct the ambiguity set, we independently sample 10 disturbance trajectories of length five, yielding the empirical distribution. The initial state region is given by $\mathcal{X}_0 := \{x_0 \in \mathbb{R}^2 : \begin{bmatrix} 12 \ 12 \end{bmatrix} \leq x_0 \leq \begin{bmatrix} 16 \ 16 \end{bmatrix}\}$, from which random initial conditions are drawn during training. Based on the sampled disturbance data, we construct an anisotropic Wasserstein ambiguity set as described in equation \eqref{anistropic wasserstein ball}, where the matrix $\Lambda$ is learned via our bilevel optimization formulation.

The learning objective is to minimize the expected 10-step closed-loop cost, expressed as follows.
\begin{equation}
    \max_{1\leq j\leq 2} \widetilde{a}_j^\top \begin{bmatrix} x_1^i \\ \vdots \\x_{10}^i \end{bmatrix}+\widetilde{b}_j^\top \begin{bmatrix} x_0^i \\ u_0^i\\ \vdots \\ u_{9}^i \end{bmatrix}+ \widetilde{c}_j 
\end{equation}
where the cost parameters $\widetilde{a}_j, \widetilde{b}_j, \widetilde{c}_j$ are specified as follows. 
\begin{equation}
\begin{aligned}
        \widetilde{a}_1 &=  \begin{bmatrix}
        10 &2 &10&2&\cdots&10&2
    \end{bmatrix} \\
    \widetilde{b}_1 &=  \begin{bmatrix}
        10 &10 &10&\cdots&10&10
    \end{bmatrix} \\
    \widetilde{a}_2 &=  \begin{bmatrix}
        -10 &-2 &-10&-2&\cdots&-10&-2
    \end{bmatrix} \\
    \widetilde{b}_2 &=  \begin{bmatrix}
        -10 &-10 &-10&\cdots&-10&-10
    \end{bmatrix} \\
    \widetilde{c}_1,\widetilde{c}_2 &= \mathbf{0}
\end{aligned}
\end{equation}
To solve the bilevel learning problem, we employ the SAA-based stochastic augmented Lagrangian algorithm. During training, initial states are uniformly sampled from $\mathcal{X}_0$ to promote robustness across the operational range and avoid overfitting to any particular scenario. After training, the learned $\Lambda$ is used to construct the ambiguity set for testing, and the corresponding distributionally robust controller is evaluated.

We compare our E2E-Regionwise-DRC method with the two baselines introduced earlier: W-DRC and E2E-Pointwise-DRC. For both E2E-Regionwise-DRC and E2E-Pointwise-DRC, the disturbance data used during training is restricted to the samples used to construct the ambiguity set. In contrast, during testing, disturbances are independently drawn from the underlying distribution, enabling a fair comparison of generalization performance under realistic stochastic variations.
\begin{table}[htbp]
\centering
\begin{tabular}{ccc}
\toprule
Method & Average cost &Constraint violation \\
\midrule
W-DRC & 862.8 & 8\% \\
E2E-Pointwise-DRC       &84.87   & 7.8\% \\
E2E-Regionwise-DRC       &46.24   & 7.8\% \\
\bottomrule
\end{tabular}
\caption{Average closed-loop cost and constraint violation}
\label{tab:numerical example}
\end{table}
Table \ref{tab:numerical example} summarizes the simulation results, reporting the average closed-loop cost and the constraint violation rate. The average cost is computed over 100 independent scenarios, each involving a randomly sampled initial state and disturbance realization, and simulated for 10 time steps. The constraint violation rate is estimated by fixing an initial state and performing 500 independent 10-step rollouts under different disturbance sequences. As shown in the table, the proposed E2E-Regionwise-DRC method significantly outperforms both baselines in terms of control performance, achieving the lowest average cost while satisfying the safety constraints. Specifically, E2E-Regionwise-DRC reduces the average cost by 90.2\% compared to the conventional controller W-DRC, and by 45.5\% relative to the learning-based baseline E2E-Pointwise-DRC.
Notably, all methods maintain a constraint violation rate below 10\%, demonstrating the effectiveness of the CVaR-based safety enforcement in practice. The comparable violation rates between E2E-Pointwise-DRC and E2E-Regionwise-DRC further suggest that the improved control performance of our method does not come at the expense of reduced robustness.

\begin{figure}
    \centering
    \includegraphics[width=1\linewidth]{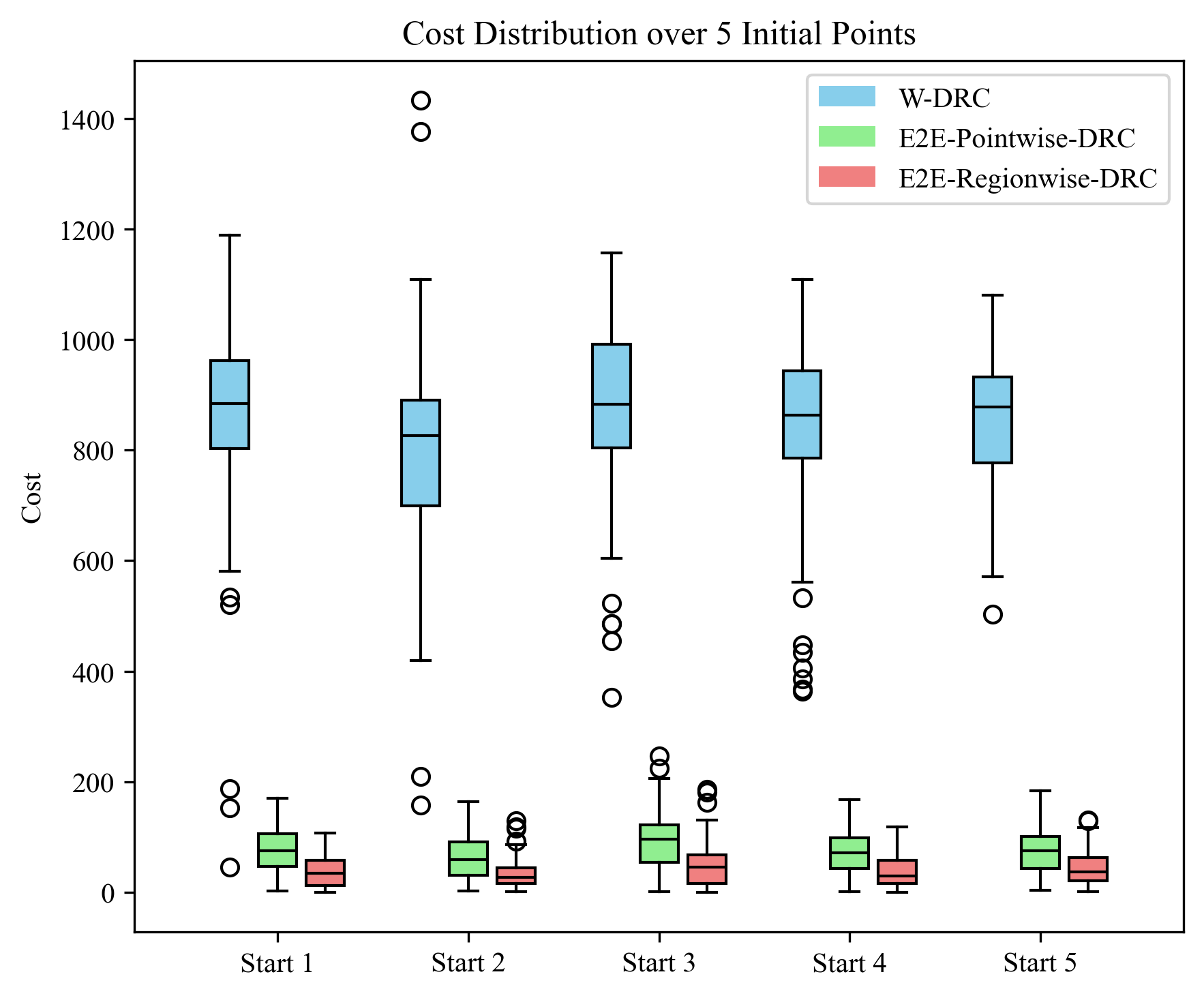}
    \caption{Box plot of 10-step closed-loop costs for five random initial states across 100 disturbance realizations in numerical experiments}
    \label{fig:boxplot of numerical}
\end{figure}


In Fig. \ref{fig:boxplot of numerical}, we further examine the cost distribution across different initial conditions by plotting the 10-step closed-loop costs for five randomly selected initial states under 100 disturbance realizations. The proposed E2E-Regionwise-DRC method consistently achieves the lowest cost across all initial states, indicating strong generalization to unseen initial conditions. In comparison, E2E-Pointwise-DRC, which is trained using a fixed initial state, performs worse and exhibits less consistency. This highlights the advantage of learning the Wasserstein metric over diverse initial conditions. This demonstrates that learning the Wasserstein metric over diverse initial conditions enhances the generalization capability of the  resulting controller.

\subsection{ Inventory control}
We evaluate the proposed method E2E-Regionwise-DRC on a classic inventory control problem under demand uncertainty \cite{inventory-1,inventory-2}. The task is to determine the standard order quantity $x_t$ and express order quantity $y_t$ over a finite horizon, minimizing the worst-case expected cost within an ambiguity set. The problem is formulated as
\begin{equation}
    \begin{aligned}
    \min_{x_t,y_t} \max_{\mathbb{P}\in \mathbb{B}^\Lambda_{\varepsilon(\Lambda)}}\mathbb{E}_{\mathbb{P}}&\left[\sum_{t=1}^Tc_1 x_t+c_2y_t+c_H\left[I_t\right]_++c_B\left[-I_t\right]_+\right]\\
    \text{s.t.}&I_t=I_{t-1}+x_{t-1}+y_t -\xi_t,\\&x_t \geq0,y_t\geq 0,\quad\forall\xi_t\in U\\
    & \text{CVaR}_{1-\eta}(I_t)\leq I_{\text{bound}} \\
    &U=\left\{\xi\left|\boldsymbol{l}_{t}\leq\xi_{t}\leq\boldsymbol{u}_{t},\forall t\right\}\right.
    \end{aligned}
\end{equation}
where $I_t$ denotes the inventory level, and the $\mathrm{CVaR}$ constraint prevents excessive overstocking. Both $x_t$ and $y_t$ follow disturbance-feedback policies as in \eqref{disturbance feedback}, with robust lower-bound constraints ensuring feasibility under all admissible disturbances.

\begin{figure}
    \centering
    \includegraphics[width=1\linewidth]{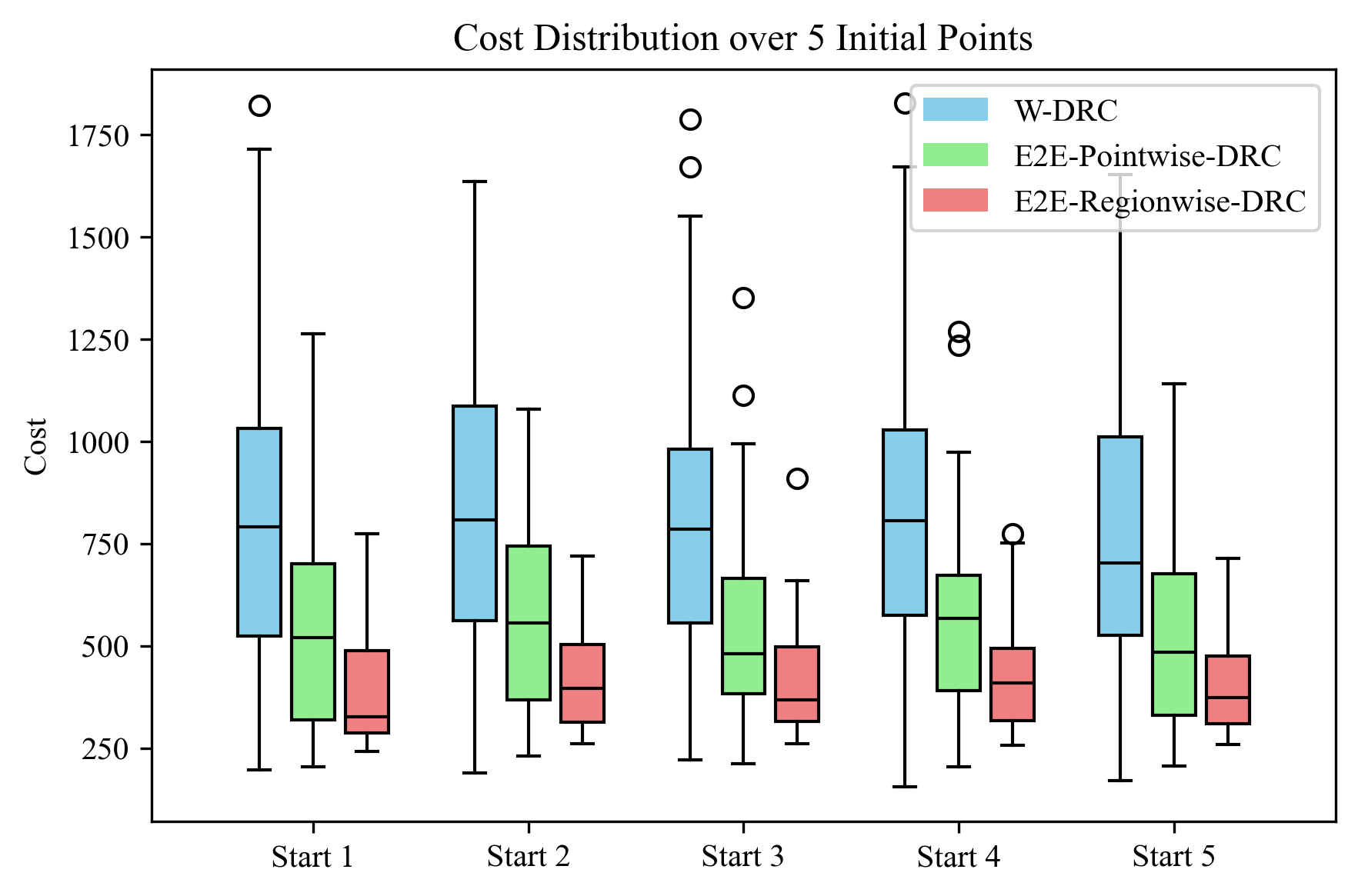}
    \caption{Box plot of 10-step closed-loop costs for five random initial states across 100 disturbance realizations in inventory control simulations}
    \label{fig:boxplot of inventory}
\end{figure}

In the experimental setup, the horizon is set to $T=5$, with cost coefficients $c_1=10$, $c_2=50$, $c_B=5$, and $c_H=80$. The disturbance $\xi_t$ follows a truncated Gaussian distribution $\mathcal{N}(5,3)$ supported on $[1,10]$, and the eigenvalues of $\Lambda$ are constrained to lie within $[0.01, 100]$. We construct the empirical distribution using a set of ten sampled disturbance sequences. The set of initial inventory levels is given by $\mathcal{X}_0 = [1,5]$.

Compared with the baselines, the proposed E2E-Regionwise-DRC method achieves a significantly lower average closed-loop cost of 397.90, whereas the W-DRC and E2E-Pointwise-DRC methods yield 808.48 and 549.75, respectively. These averages are computed over 100 random realizations. 
In relative terms, E2E-Regionwise-DRC reduces the average cost by 50.8\% compared with the conventional controller (W-DRC) and by 27.6\% relative to the learning-based baseline (E2E-Pointwise-DRC). Furthermore, Fig. 2 reports the distribution of costs across different initial states, highlighting the generalization capability of the proposed method.

\section{Conclusion}
In this work, we proposed an end-to-end finite-horizon Wasserstein DRC framework that learned ambiguity sets in a control-oriented manner through a bilevel formulation. By introducing a learnable anisotropic Wasserstein metric and guiding metric learning with downstream control-performance feedback across varying initial conditions, the proposed framework aligned ambiguity set geometry with performance-critical directions and yielded less conservative policies with improved generalization. We established statistical finite-sample guarantees via the proposed geometry-aware radius adjustment mechanism, proved continuity properties with respect to the metric, and showed statistical consistency of the learned metric at a non-asymptotic rate. Experiments on both numerical and inventory control tasks confirmed superior closed-loop performance over conventional Wasserstein DRC methods and improved generalization compared to existing end-to-end control approaches.

\bibliographystyle{plain}        
\bibliography{reference}           



\appendix

\section{Proof of Theorem 3}
Rewrite the constraints in (\ref{convex reformulation}) as equalities and a cone constraint. The difficulties lie in the matrix variable $\mathbf{M}$, and we stack its columns as a vector $\text{vec}(\mathbf{M})$.

Thus, the optimization variables can be combined as 
$$x=\begin{bmatrix} \lambda &\widetilde{\lambda} &s_1 &\cdots &s_N &q_1 &\cdots &q_N &t &\mathbf{v} \quad \text{vec}(\mathbf{M}) \end{bmatrix}^\top.$$

Based on the equality $vec(AXB)=(B^\top\otimes A)vec(X)$, we can reformulate the constraints containing the matrix $\mathbf{M}$ as 

\begin{equation*}
    \begin{gathered}
        a_j^\top (L_{22} \mathbf{M} \hat{\mathbf{w}}^i) + b_j^\top \begin{bmatrix} 0 \\ \mathbf{M} \end{bmatrix} \hat{\mathbf{w}}^i \\
        = \left( (\hat{\mathbf{w}}^i)^\top \otimes a_j^\top L_{22} \right) \text{vec}(\mathbf{M})+\left( (\hat{\mathbf{w}}^i)^\top \otimes b_{j,\text{bottom}}^\top \right) \text{vec}(\mathbf{M}) \\
         \left\|\begin{bmatrix}0\\\mathbf{M}\end{bmatrix}^\top b_j+(L_{22}\mathbf{M}+H)^\top a_j\right\|_{\Lambda^{-1}} \\
         =\left\|\Lambda^{-1/2}\left[\left(b_{j,\text{bottom}}^\top\otimes I+a_j^\top \otimes L_{22}\right)\text{vec}(\mathbf{M})+H^\top a_j\right]\right\| \\
        \end{gathered}
\end{equation*}
and
\begin{equation*}
    \begin{gathered}
F_{x,k}^\top\left(L_{22}\mathbf{M}\right)\hat{\mathbf{w}}^i+F_{u,k}^\top\begin{bmatrix}0\\\mathbf{M}\end{bmatrix}\hat{\mathbf{w}}^i\\
        = \left( (\hat{\mathbf{w}}^i)^\top \otimes F_{x,k}^\top L_{22} \right) \text{vec}(\mathbf{M})+\left( (\hat{\mathbf{w}}^i)^\top \otimes (F_{u,k}^{(2)})^\top \right) \text{vec}(\mathbf{M})\\
         \left\|\begin{bmatrix}0\\\mathbf{M}\end{bmatrix}^\top F_{u,k}+(L_{22}\mathbf{M}+H)^\top F_{x,k}\right\|_{\Lambda^{-1}} \\
         =\left\|\Lambda^{-1/2}\left[\left({F_{u,k}^{(2)}}^\top\otimes I+F_{x,k}^\top \otimes L_{22}\right)\text{vec}(\mathbf{M})+H^\top F_{x,k}\right]\right\| 
\end{gathered}
\end{equation*}

where $b_{j,\text{bottom}}$ and $F_{u,k}^{(2)}$ represent the effective sub-vectors of $b_j$ and $F_{u,k}$, respectively, that contribute to the products with $\begin{bmatrix}0\\\mathbf{M}\end{bmatrix}$.
Then we can abstract the constraints into the following form, combining all the linear inequalities and rewriting the cone constraints.
\begin{equation}
\begin{aligned}
        A_1 x&\leq b\\
    \|A_{2,j} x+c_{2,j}\|&\leq d_2^\top x+e_2,\quad \forall 1\leq j\leq n_j\\
    \|A_{3,k} x+c_{3,k}\|&\leq d_3^\top x+e_3,\quad \forall 1\leq k\leq T
\end{aligned}
 \label{abstract cone constraints}
\end{equation}

Complement the linear inequality as an equality.
\begin{equation*}
    \begin{gathered}
         A_1x+s_1=b,\quad s_1\in\mathcal{K}_0=\mathbb{R}^\gamma_+ \\
    (d_2^\top x+e_2,A_{2,j}x+c_{2,j})\in \mathcal{K}_{1,j},\\  
    \text{where }\mathcal{K}_{1,j}=\{(t,z)\in\mathbb{R}\times\mathbb{R}^n:\|z\|\leq t\},1\leq j\leq n_j\\
     (d_3^\top x+e_3,A_{3,k}x+c_{3,k})\in \mathcal{K}_{2,k},\\ 
     \text{where }\mathcal{K}_{2,k}=\{(t,z)\in\mathbb{R}\times\mathbb{R}^n:\|z\|\leq t\},1\leq k\leq T
    \end{gathered}
\end{equation*}
where $\gamma=(N+1)n_j+NT+...$. For the norm cone constraints, let
$$
s =\begin{bmatrix}d^\top x + e \\A x + c\end{bmatrix}\Rightarrow-\begin{bmatrix}
d^\top \\A\end{bmatrix} x +s =
\begin{bmatrix}e \\c\end{bmatrix}\Rightarrow s\in\mathcal{K}
$$
Then we can construct $s=\begin{bmatrix}s_1,s_2,\cdots,s_{n_j+T+1}\end{bmatrix}$, and (\ref{abstract cone constraints}) are equivalent to
\begin{equation}
    \begin{aligned}
            &\begin{bmatrix}
        A_1\\ -d_2 \\-A_{2,1}\\\vdots\\-d_3\\-A_{3,T}
    \end{bmatrix}x+s=\begin{bmatrix}
        b\\e_2\\c_{2,1}\\ \vdots\\e_3\\ c_{3,T}
    \end{bmatrix} \\
    &s\in \mathcal{K}:=\mathcal{K}_0\times \mathcal{K}_{1,1}\times\cdots\times \mathcal{K}_{2,T}
    \end{aligned}
\end{equation}
Based on the above analysis, problem \eqref{convex reformulation} can be equivalently rewritten in the following form,
\begin{equation}
    \begin{aligned}
        \min \quad &r^\top x \\
        \text{s.t.}\quad & Gx+s=h \\
        & s\in \mathcal{K}
    \end{aligned}
    \label{eq: linear cone program}
\end{equation}
Define 
$$
Q=Q(G,h,r)=\begin{bmatrix}0&G^T&r\\-G&0&h\\-r^T&-h^T&0\end{bmatrix}\in\mathcal{Q}
$$
where $\mathcal{Q}$ denotes the set of skew matrices in the above form.
Given $Q$, we use $z=(u,v,w)$ to do the homogeneous self-dual embedding. The normalized residual map is defined by $\mathcal{N}(z,Q)=\left((Q-I)\Pi+I\right)(z/|w|)$, where $\Pi$ is the projection onto $\mathbb{R}^n\times\mathcal{K}^*\times\mathbb{R}_+$. The mapping $\mathcal{N}$ contains the optimality conditions of problem \eqref{eq: linear cone program}. Denote the dual variable as $y$. When $z$ satisfies $\mathcal{N}(z,Q)=0$ and
$w>0$, the optimal primal-dual pair $(x,y,s)$ for the corresponding optimization problem \eqref{eq: linear cone program} decided by $Q$ can be constructed from $z$ below.
$$\phi(z)=(u,\Pi(v),\Pi(v)-v)/w$$
Use the notation $\varphi$ to represent the mapping $Q\mapsto z$, which is implicitly contained in $\mathcal{N}(z,Q)=0,w>0$.
Then the program \eqref{eq: linear cone program} can be viewed as a composition of $Q,\varphi,\phi$. The normalized map $\mathcal{N}$ is an affine function of $Q$, hence $J_{\mathcal{N}}(Q)$ always exists. Since the cones we use are sub-differentiable \cite{differentiable-cone}, we can leverage the implicit function theorem to obtain the weak Jacobian of $\varphi$ as follows.
$$
J_\varphi(Q)=\{-U^{-1}V\mid \begin{bmatrix}
    U &V
\end{bmatrix}\in J_\mathcal{N}(z,Q)\}
$$

The weak Jacobian of $\phi(z)$ can be computed as
$$
J_\phi(z)=\begin{bmatrix}I&0&-x\\0&J_{\Pi_{\mathcal{K}^*}}(v)&-y\\0&J_{\Pi_{\mathcal{K}^*}}(v)-I&-s,\end{bmatrix}
$$
Suppose the parameter $(G,h,r)$ is related to the variable $\Lambda$. Based on the composition relationship, the conservative Jacobian of the parametric solution $\left(x(\Lambda),y(\Lambda),s(\Lambda)\right)$ can be computed as
$$
J_\phi(\Lambda)=J_\phi(z)J_\varphi(Q)J_Q(G,h,r) J_{(G,h,r)}(\Lambda).
$$

\section{Proof of Lemma 2}
    Based on Theorem 1.2 in \cite{basic-continuity-thm-2}, if $M$ is a compact convex set in $\mathbb{R}^n$, $g$ is continuous on $M\times K^*$ and each component of $g$ is convex in $z\in M$ and $x\in K^*$, the point-to-set map $R(\cdot)$ is continuous on $K^*$. 

    As required in the conditions, $f$ is continuous. According to Theorem 1 in \cite{basic-continuity-thm-1}, the solution map $Z^*(\cdot)$ is upper semicontinuous on $K^*$. 

    Based on the definition 1.4.1 of an upper semicontinuous point-to-set map in \cite{basic-continuity-thm-3}, for any neighborhood $\mathcal{U}$ of $Z^*(x)$,
    $$
    \exists \eta>0, \text{ such that } \forall x^\prime \in B_\eta(x), Z^*(x^\prime)\subset \mathcal{U}.
    $$
    Because $R(x)$ is convex for every $x\in K^*$ and $f(z,x)$ is strictly quasiconvex in $z$ for each $x\in K^*$, the optimal solution $z^*(x)$ is unique. Then the solution function $z^*(\cdot)$ is a single-valued point-to-set map.

    The upper semicontinuity of $Z^*(x)$ implies that for any neighborhood $V$ of $z^*(x)$, there exists a neighborhood $B_\eta(x)$ of $x$ which satisfies $z^*(B_\eta(x))\subset V$. Obviously, $z^*(\cdot)$ is continuous on $K^*$.

\section{Proof of Theorem 4}
    We will verify that the conditions in Lemma \ref{general continuity lemma} are all satisfied for problem (\ref{convex reformulation}). Firstly, the objective and constraints are continuous on "$M\times K^*$". We next examine the convexity conditions.

    Fixing the parameters $\Lambda^{-1}$ and $x_0$, the objective is linear in $\lambda$ and $s_i$, and therefore strictly quasiconvex in the decision variables.

    On the one hand, under fixed $\Lambda^{-1}$ and $x_0$, the affine constraint functions are trivially convex in the decision variables. The remaining nonlinear constraint functions in \eqref{convex reformulation} take the form below,
    $$\left\|\begin{bmatrix}0\\\mathbf{M}\end{bmatrix}^\top F_{u,k}+\left(L_{22}\mathbf{M}+H\right)^\top F_{x,k}\right\|_{\Lambda^{-1}}\leq\widetilde{\lambda}.$$
    The left-hand side is the composition of a linear function $S_1(\mathbf{M})$ with the norm function $S_2(\cdot)=\|\cdot\|_{\Lambda^{-1}}$, and is therefore convex in $(\mathbf{M},\widetilde{\lambda})$. By the same reasoning, the constraint $\left\|\begin{bmatrix}0\\\mathbf{M}\end{bmatrix}^\top b_j+\left(L_{22}\mathbf{M}+H\right)^\top a_j\right\|_{\Lambda^{-1}}\leq\lambda$ is convex in $(\mathbf{M},\lambda)$.

    On the other hand, if we fix the decision variables, the constraint functions are linear in $x_0$ and thus convex in $x_0$. For the parameter $\Lambda^{-1}$, the dependence arises through two mappings: $h_1(\Lambda^{-1}) := \|c\|_{\Lambda^{-1}}=\|\Lambda^{-1}c\|$ and $h_2(\Lambda^{-1}):=\sigma_{\max}(\Lambda)$. The function $h_1$ is the composition of a linear function $X\mapsto Xc_0$ and a convex norm function $y\mapsto\|y\|$, hence it is convex in $\Lambda^{-1}$. To prove convexity of $h_2$, observe that
    \begin{equation}
        \begin{aligned}
            h_2\left(\theta X + (1-\theta )Y\right) &= \frac{1}{\sigma_{\min}\left(\theta X + (1-\theta )Y\right)} \\
            &\leq \frac{1}{\theta \sigma_{\min}(X)+ (1-\theta)\sigma_{\min}(Y)} \\
            &\leq \theta \frac{1}{\sigma_{\min}(X)}+(1-\theta)\frac{1}{\sigma_{\min}(Y)} \\
            & = \theta h_2(X) + (1-\theta)h_2(Y)
        \end{aligned}
    \end{equation}
    where $\theta\in(0,1)$. The first inequality follows from the concavity of the function $X\mapsto \sigma_{\min}(X)$, and the second from the convexity of the function $t\mapsto\frac{1}{t}$, noting that all mentioned matrices are positive definite.
    Moreover, the constraint functions exhibit no coupling between $x_0$ and $\Lambda^{-1}$. Thus, the constraint functions are jointly convex in $(x_0,\Lambda^{-1})$.
    
    Therefore, according to Lemma \ref{general continuity lemma}, we can conclude that the solution mapping of problem \eqref{convex reformulation} is continuous in $x_0$ and $\Lambda^{-1}$. Since the map $\Lambda\mapsto \Lambda^{-1}$ is continuous and the composition of two continuous functions remains continuous, the solution mapping is continuous in $x_0$ and $\Lambda$.

\section{Proof of Proposition 4}
    Obviously, $H$ is linear in $\mu$ and $\nu$, which directly implies local Lipschitz continuity with respect to these variables.

    For the variable $\kappa$, when the other variables are fixed, $H$ reduces to a quadratic function in $\kappa$. Since quadratic functions are smooth and hence locally Lipschitz, $H$ is locally Lipschitz continuous in $\kappa$.

    Next, consider the variable $\alpha$. The function $G(\Lambda,\alpha)$ can be expressed as follows,
     $$
     \begin{aligned}
    &G(\Lambda,\alpha)=\frac{1}{N}\sum_{i=1}^N \frac{\left[g\left(x_{t(i)}^{i},u_{t(i)}^{i}\right)-\alpha\right]_+}{\eta}+\alpha \\
    &\frac{\left[g\left(x_{t(i)}^{i},u_{t(i)}^{i}\right)-\alpha\right]_+}{\eta}\\
    &\quad\quad =\begin{cases}
         \dfrac{g\left(x_{t(i)}^{i},u_{t(i)}^{i}\right)-\alpha}{\eta}, \alpha\leq g\left(x_{t(i)}^{i},u_{t(i)}^{i}\right) \\
         \dfrac{\alpha-g\left(x_{t(i)}^{i},u_{t(i)}^{i}\right)}{\eta}, \alpha\geq g\left(x_{t(i)}^{i},u_{t(i)}^{i}\right)
     \end{cases}
     \end{aligned}
     $$
     where $t(i):=\arg\max_{t} g(x_t^i,u_t^i)$. Since the left- and right-hand limits coincide at each breakpoint, the function $G(\Lambda,\alpha)$ is continuous in $\alpha$. Being piecewise linear and continuous, $G(\Lambda,\alpha)$ is therefore locally Lipschitz in $\alpha$. 
     For the quadratic term $\frac{\nu}{2}\left\|G(\Lambda,\alpha)+\kappa\right\|^2$ in $H$, its Lipschitz property in $\alpha$ is determined by the mapping $\|G(\Lambda,\alpha)\|^2$. Because $G(\Lambda,\alpha)$ is continuous and piecewise differentiable, its squared norm is also continuous and locally Lipschitz in $\alpha$.

    Turning to the parameter $\Lambda$, Theorem~\ref{lipschitz of optimal solution} establishes that the control input is locally Lipschitz continuous in $\Lambda$. Since the system states evolve according to the linear dynamics \eqref{system}, the resulting trajectories inherit local Lipschitz continuity in $\Lambda$.
    Consequently, both $F(\Lambda)$ and $G(\Lambda,\alpha)$ are locally Lipschitz in $\Lambda$. Finally, the term $\frac{\nu}{2}\|G(\Lambda,\alpha)+\kappa\|^2$ can be interpreted as a composition of locally Lipschitz functions, and thus it is itself locally Lipschitz. Based on the above analysis, $H$ is locally Lipschitz continuous in each variable.

\section{Proof of Theorem 5}
We begin by decomposing the left-hand side of inequality \eqref{eq: continuty of tildeF} into two components: the continuity with respect to the metric matrix $\Lambda$ and the continuity with respect to the start point $x_0$. The disturbance variable in $\widetilde{F}$ is fixed as $\mathbf{w}$ and omitted for clarity.
\begin{equation*}
    \begin{aligned}
        &\left|\widetilde{F}(x_0^1,\Lambda_1)-\widetilde{F}(x_0^2,\Lambda_2)\right| \\ =&\left|\widetilde{F}(x_0^1,\Lambda_1)-\widetilde{F}(x_0^1,\Lambda_2)+\widetilde{F}(x_0^1,\Lambda_2)-\widetilde{F}(x_0^2,\Lambda_2)\right|\\
        \leq &\underbrace{\left|\widetilde{F}(x_0^1,\Lambda_1)-\widetilde{F}(x_0^1,\Lambda_2)\right|}_{\text{Part 1}}+\underbrace{\left|\widetilde{F}(x_0^1,\Lambda_2)-\widetilde{F}(x_0^2,\Lambda_2)\right|}_{\text{Part 2}}
    \end{aligned}
\end{equation*}
Specifically, Part 1 can be computed as follows,
 \begin{equation}
     \begin{aligned}
        &\text{Part 1} = \left|
        \begin{aligned}
            &\left[\max_{1\leq j\leq n_j} \widetilde{a}_j^\top \begin{bmatrix} x_1^1 \\ \vdots \\x_L^1 \end{bmatrix}+\widetilde{b}_j^\top \begin{bmatrix} x_0^1 \\ \mathbf{v}^*(\mathbf{x}^1,\Lambda_1) \end{bmatrix}+c_j\right] - \\ &\quad\left[\max_{1\leq j\leq n_j} \widetilde{a}_j^\top \begin{bmatrix} x_1^2 \\ \vdots \\x_L^2 \end{bmatrix}+\widetilde{b}_j^\top \begin{bmatrix} x_0^1 \\ \mathbf{v}^*(\mathbf{x}^2,\Lambda_2) \end{bmatrix}+c_j\right]
        \end{aligned}
\right| \\
        \leq &\max_{1\leq j\leq n_j}  \left| \widetilde{a}_j^\top \begin{bmatrix} x_1^1-x_1^2 \\ \vdots \\x_L^1 -x_L^2\end{bmatrix}+\widetilde{b}_j^\top \begin{bmatrix} x_0^1-x_0^1 \\ \mathbf{v}^*(\mathbf{x}^1,\Lambda_1)-\mathbf{v}^*(\mathbf{x}^2,\Lambda_2) \end{bmatrix}\right|.
         \end{aligned}
 \end{equation}
To establish the Lipschitz continuity of the L-step closed-loop cost with respect to the weighting matrix $\Lambda$, we analyze how perturbations in $\Lambda$ propagate through the closed-loop system dynamics. At time step $t = 0$, consider two matrices $\Lambda_1$ and $\Lambda_2$. The control inputs satisfy
$$
\|v_{0|0}^*(x_0,\Lambda_1)-v_{0|0}^*(x_0,\Lambda_2)\|\leq L_{\Lambda,v}\|\Lambda_1-\Lambda_2\|
$$
where $v_{0|0}^*(x_0,\Lambda)$ represents the first control action from the optimal solution to problem \eqref{convex reformulation}, and $L_{\Lambda,v}$ denotes the Lipschitz constant of the optimal control map with respect to $\Lambda$. This Lipschitz continuity is implied by Theorem \ref{lipschitz of optimal solution}, which establishes local Lipschitz continuity of the solution mapping in $\Lambda$. Since $\Lambda$ is constrained to $\mathcal{A}$, the set of positive definite matrices with bounded eigenvalues, local Lipschitz continuity extends to global Lipschitz continuity. Consequently, under the system dynamics $x_1 = Ax_0 + B u_0 + w_0$, we obtain
$$
\|x_1^1 - x_1^2\| \leq \|B\| \cdot L_{\Lambda,v} \|\Lambda_1 - \Lambda_2\|.
$$
At time step t = 1, the optimal control inputs depend on both the current state and $\Lambda$. Since $x_0$ is bounded in $\mathcal{X}_0$, following the same analysis of global Lipschitz continuity in $\Lambda$, the control policy Lipschitz in both $x_0$ and $\Lambda$. Then we have
$$
\|v_{0|1}^*(x_1^1,\Lambda_1)-v_{0|1}^*(x_1^2,\Lambda_2)\|\leq L_{\Lambda,v}\|\Lambda_1-\Lambda_2\|+ L_{x,v}\|x_1^1-x_1^2\| 
$$
which, combined with the previous bound on $\|x_1^1 - x_1^2\|$, yields a recursive bound for the difference in control inputs.
$$
\begin{aligned}
     \|v_{0|1}^*(x_1^1,\Lambda_1)-v_{0|1}^*(x_1^2,\Lambda_2)\|\leq &L_{\Lambda,v}  \|\Lambda_1-\Lambda_2\|\\&+ L_{x,v}\|B\|L_{\Lambda,v} \|\Lambda_1 - \Lambda_2\|
\end{aligned}
$$
Similarly, the difference between $x_2^1$ and $x_2^2$ can be bounded by propagating this inequality through the dynamics:
$$
\|x_2^1-x_2^2\|\leq \begin{aligned}
   &\|A\|\cdot \|B\|\cdot L_{\Lambda,v}\|\Lambda_1-\Lambda_2\|\\
   &+\|B\|\|v_{0|1}^*(x_1^1,\Lambda_1)-v_{0|1}^*(x_1^2,\Lambda_2)\| 
\end{aligned}
$$
Propagating this argument through the dynamics up to time step $t=L-1$, we establish recursive bounds on both $\|x_t^1 - x_t^2\|$ and $\|u_t^1 - u_t^2\|$. Therefore, both state and input trajectories remain Lipschitz continuous with respect to $\Lambda$. Consequently, the cumulative closed-loop cost over $L$ steps is Lipschitz continuous in $\Lambda$, with a Lipschitz constant denoted as $L_1$.
In conclusion, we have 
\begin{equation}
    \begin{aligned}
        \text{Part 1} &\leq 
        \begin{aligned}
            &\max_{1\leq i\leq nL}{\left|\widetilde{a}_{j^*}^{(i)}\right|} \sum_{k=1}^{L} R_{k}\|\Lambda_1-\Lambda_2\| +   \\ &\quad \quad\quad\max_{1\leq i\leq nL}{\left|\widetilde{b}_{j^*}^{(i)}\right|} \sum_{k=1}^{L} S_{k}\|\Lambda_1-\Lambda_2\|
        \end{aligned}
    \\
        &\leq L_{1}\|\Lambda_1-\Lambda_2\|
    \end{aligned}
\end{equation}

Similarly, $\text{Part 2}\leq L_2 \|x_0^1-x_0^2\|$. Thus the inequality \eqref{eq: continuty of tildeF} holds as,
\begin{equation}
    \left|\widetilde{F}(x_0^1,\Lambda_1,\mathbf{w})-\widetilde{F}(x_0^2,\Lambda_2,\mathbf{w})\right|  \leq L_1\|\Lambda_1-\Lambda_2\|+L_2\|x_0^1-x_0^2\|
\end{equation}
Following a similar process, we can prove the Lipschitz continuity of $\widetilde{G}(\Lambda,\alpha,\mathbf{w})$ when fixing $\mathbf{w}$.

\section{Proof of Theorem 6}
Firstly, when the dual parameter $\mu^k$ and the penalty parameter $\nu^k$ are fixed, the iterates $\{(\Lambda_t,\alpha_t,\kappa_t)\}$ generated by Algorithm \ref{outer-loop algorithm} form a Clarke stochastic subgradient sequence almost surely. This follows from the fact that conservative Jacobians coincide with gradients almost everywhere, as established in Theorem 1 of \cite{differentiable-optimization-2}. Then, by Theorem 3 of \cite{differentiable-optimization-2}, the converged point $(\Lambda^*, \alpha^*, \kappa^*)$ of the sequence $\{(\Lambda_t, \alpha_t, \kappa_t)\}$ under $\mu^k,\nu^k$ is Clarke critical, i.e., it satisfies
$$0 \in \partial^c H(\Lambda^*, \alpha^*, \kappa^*),$$
where $\partial^c$ denotes the Clarke subdifferentials taken with respect to $\Lambda^*,\alpha^*$ and $\kappa^*$. Denote the accumulation point $(\Lambda^*, \alpha^*, \kappa^*)$ under the parameters $\mu^k,\nu^k$ as $(\Lambda^k,\alpha^k,\kappa^k)$. 

Secondly, we analyze the evolution of the dual parameter $\mu$ and the penalty parameter $\nu$. When the sequence ${\nu^k}$ is bounded, the termination condition $\left\|G(\alpha^k, \Lambda^k) + \kappa^k\right\|_2 \leq \tau \left\|G_{\mathrm{best}}\right\|_2$ in Algorithm~\ref{outer-loop algorithm} is satisfied for some $0<\tau < 1$, which implies that $\lim_{k\to\infty}\left\|G(\alpha^k,\Lambda^k)+\kappa^k\right\|_2=0$, because $\lim_{k\to\infty}\tau^k=0$. Meanwhile, the dual parameter $\mu^k$ evolves according to the update rule $\mu^k = \Pi_{[\mu_{\min},\mu_{\max}]}\left(\mu^{k-1}+\nu^{k-1}(G(\alpha^k,\Lambda^k)+\kappa^k)\right)$, where $\Pi_{[\mu_{\min}, \mu_{\max}]}$ denotes the projection onto the bounded interval $[\mu_{\min}, \mu_{\max}]$. Since $G(\alpha^k,\Lambda^k)+\kappa^k$ converges to $0$, the sequence $\{\mu^k\}$ remains bounded and admits a convergent subsequence. We conclude that $\{\mu^k\}$ converges to a limit point $\mu^*$. Together with the convergence $\nu^k \to \nu^*$, the algorithm asymptotically approaches a regime where the dual parameters are fixed. By invoking the result established in the first part, we conclude that, with the fixed $\mu^*, \nu^*$, the limiting point $(\Lambda_N, \alpha_N, \kappa_N)$ satisfies $0 \in \partial^c H(\Lambda_N, \alpha_N, \kappa_N)$ and $\left\|G(\alpha_N,\Lambda_N)+\kappa_N\right\|_2=0$.

When the sequence ${\nu^k}$ is unbounded, the sequence ${\mu^k}$ remains bounded due to the projection operation in Algorithm~\ref{outer-loop algorithm}. Obviously, we can see that $\lim_{k\to\infty}{\mu^k}/\nu^k=0$. Recall the condition $0 \in \partial^c H(\Lambda^k, \alpha^k, \kappa^k)$ for each fixed pair $(\mu^k,\nu^k)$. 
The specific form of $J_H(\Lambda^k, \alpha^k, \kappa^k)$ can be written as,
$$
0\in J_F(\Lambda^k)+\mu^k J_{G+\kappa}(\Lambda^k,\alpha^k,\kappa^k) + \frac{\nu^k}{2}J_{\|G+\kappa\|^2}(\Lambda^k,\alpha^k,\kappa^k)
$$
Dividing the conservative Jacobians $J_H(\Lambda^k),J_H(\alpha^k)$ and $J_H(\kappa^k)$ by $\nu^k$, we can obtain that, when $k\to\infty$,
$$
\begin{aligned}
    0&\in J_{1/2\|G+\kappa\|^2}(\Lambda^k) \\ 
    0 &\in J_{1/2\|G+\kappa\|^2}(\alpha^k) \\
    0 &\in J_{1/2\|G+\kappa\|^2}(\kappa^k)
\end{aligned}
$$
These are the stationary conditions of the following problem.
$$
    \begin{aligned}
        \min\quad &\|G(\Lambda,\alpha)+\kappa\|^2 \\
        \text{s.t.}\quad & \Lambda\in\mathcal{A}
    \end{aligned}
$$
Therefore, the limiting point $(\Lambda_N, \alpha_N, \kappa_N)$ also satisfies these stationary conditions.

Finally, we prove that the converged point $(\Lambda_N, \alpha_N, \kappa_N)$ satisfies the generalized KKT condition.
When the sequence of dual variables $\{\nu^k\}$ is bounded, the previous analysis implies that $G(\Lambda_N,\alpha_N)+\kappa_N = 0$. In this case, the Clarke subdifferential of $H$,
$$
\begin{aligned}
    0\in\partial^c &H(\Lambda_N, \alpha_N, \kappa_N)\\ &= \partial^c F + \mu^*\partial^c (G+\kappa) + \nu^*(G+\kappa)\cdot\partial^c(G+\kappa),
\end{aligned}
$$
reduces to  $0\in \partial^c F(\Lambda_N) + \mu\partial^c \left(G(\Lambda_N,\alpha_N)+\kappa_N\right)$.

If $\{\nu^k\}$ is unbounded, recall that
\begin{equation}
    \begin{aligned}
        0\in &\partial^c F(\Lambda^k) + \mu^k\partial^c \left(G(\Lambda^k,\alpha^k)+\kappa\right) + \\ &\nu^k\left(G(\Lambda^k,\alpha^k)+\kappa\right)\cdot\partial^c\left(G(\Lambda^k,\alpha^k)+\kappa\right)
    \end{aligned}
    \label{eq: clarke critical equation}
\end{equation}
When $k\to\infty$, we have $0\in \partial^c 1/2\|G+\kappa\|^2 =\left(G(\Lambda^k,\alpha^k)+\kappa\right)\cdot\partial^c\left(G(\Lambda^k,\alpha^k)+\kappa\right)$ from the previous analysis. When $\nu^k\to\infty$, the term $\left(G(\Lambda^k,\alpha^k)+\kappa\right)\cdot\partial^c\left(G(\Lambda^k,\alpha^k)+\kappa\right)$ tends to $0$ to preserve the well-posedness of the equation \eqref{eq: clarke critical equation}. Combining these two observations, we obtain that the converged point $(\Lambda_N,\alpha_N,\kappa_N)$ of $\{(\Lambda^k,\alpha^k,\kappa^k)\}$ also satisfies the generalized KKT condition $0\in \partial^c F(\Lambda_N) + \mu_N \partial^c \left(G(\Lambda_N,\alpha_N)+\kappa_N\right)$.


\section{Proof of Theorem 7}
To simplify the notations, we denote the stacked variables in the outer-level problem as $\tau := (\Lambda, \alpha, \kappa)$.
Recall the definition of the stochastic generalized KKT set-valued map:
$$\Phi(\tau,\xi) := \mathrm{conv}\left\{ \bigcup_{\mu \in Q(\tau,\xi)} \partial^c \widetilde{\mathcal{L}}(\tau, \xi, \mu) \right\},$$
where the set of admissible multipliers is
$Q(\tau,\xi) := \left\{ \mu : 0 \in \partial^c \widetilde{\mathcal{L}}(\tau,\mu,\xi), \; \tau \in \widetilde{\mathcal{M}} \right\}$. The set $\Phi(\tau, \xi)$ therefore collects all generalized gradients of the Lagrangian corresponding to feasible multipliers, and the weak stationary condition of the problem \eqref{original outer-loop problem} can be compactly expressed as
$$0 \in \mathbb{E}[\Phi(\tau, \xi)] + \mathcal{N}_{\widetilde{\mathcal{M}}}(\tau).$$
Let $\{\tau_N\}_{N=1}^\infty \subseteq \widetilde{\mathcal{M}}$ be the sequence which collects the generalized KKT points of problem \eqref{SAA of outer-loop problem} with sample size $N$, and suppose that $\tau^* \in \widetilde{\mathcal{M}}$ is an accumulation point of this sequence. For each $N$, the SAA-based generalized stationary condition is given by
$$0 \in \Phi_N(\tau_N) + \mathcal{N}_{\widetilde{\mathcal{M}}}(\tau_N), \text{where } \Phi_N(\tau) := \frac{1}{N} \sum_{i=1}^N \Phi(\tau, \xi^i),
$$
where $\{\xi^i\}_{i=1}^N$ are i.i.d. samples of the random variable $\xi$.
Fix an arbitrary $\epsilon > 0$. Since $\tau^*$ is an accumulation point of $\{\tau_N\}$, for any radius $r > 0$, there exists an integer $N_0$ such that $\tau_N \in \mathcal{B}_r(\tau^*)$ for all $N \geq N_0$. Define the neighborhood-aggregated SAA set-valued mapping as
$$\Phi_N^r(\tau^*):= \bigcup_{\tau^{\prime}\in \mathcal{B}_{r}(\tau^*)\cap\widetilde{\mathcal{M}}} \Phi_N(\tau^\prime).$$
By the inclusion relationship, it holds that for all $N \geq N_0$,
\begin{equation}
    \mathbb{D}(\Phi_N(\tau_N),\Phi_N^r(\tau^*))= 0.
    \label{eq: proof in consistency}
\end{equation}
Next, we examine the distance between $\Phi_N^r(\tau^*)$ and the expectation $\mathbb{E}[\Phi(\tau^*,\xi)]$. From Theorem \ref{thm: Lipschitz of random functions} together with the boundedness of $Q(\tau,\xi)$ by $[\mu_{\min} ,\mu_{\max}]$ as in Algorithm \ref{outer-loop algorithm}, it follows that $\|\Phi(\tau,\xi)\|$ is dominated by an integrable function $\phi(\xi)$. Based on this property, we can obtain the following asymptotic relationship according to Theorem 1 in \cite{converge-to-weak-stationary-point}
$$
\sup_{\tau\in\widetilde{\mathcal{M}}}\mathbb{D}\big(\Phi_N(\tau),\mathbb{E}\big[\Phi^r(\tau,\xi)\big]\big)\to0
$$
almost surely as $N\to\infty$. Hence, for $N$ sufficiently large, taking $\tau=\tau^*$ as the special case, we have the truncated estimate of the above limit:
$$
\mathbb{D}\left(\Phi_N^r(\tau^*),\mathbb{E}\big[\Phi^{2r}(\tau^*,\xi)\big]\right)\leq 2\epsilon
$$
Combine it with \eqref{eq: proof in consistency}, we deduce
\begin{equation}
    \begin{aligned}
            &\mathbb{D}(\Phi_N(\tau_N),\mathbb{E}\big[\Phi^{2r}(\tau^*,\xi)\big]) \\ &\leq \mathbb{D}(\Phi_N(\tau_N),\Phi_N^r(\tau^*)) + \mathbb{D}(\Phi_N^r(\tau^*),\mathbb{E}\big[\Phi^{2r}(\tau^*,\xi)\big]) \\
            &\leq 2\epsilon
    \end{aligned}
\end{equation}
This implies that for any given $\epsilon>0$, there exists $N$ large enough such that
$$
\Phi_N(\tau_N) \subseteq \mathbb{E}\big[\Phi^{2r}(\tau^*,\xi)\big] + 2\epsilon \mathcal{B}
$$
where $\mathcal{B}$ is a unit ball. 
the definition of the normal cone, we also have $\mathcal{N}_{\widetilde{\mathcal{M}}}(\tau_N)\subseteq \mathcal{N}_{\widetilde{\mathcal{M}}}(\tau^*)$. Taking the limit of the SAA-based stationary condition $0\in\Phi_N(\tau_N)+\mathcal{N}_{\widetilde{\mathcal{M}}}(\tau_N)$, we have 
\begin{equation}
    \begin{aligned}&0\in\lim\sup_{N\to\infty}\{\Phi_N(\tau_N)+\mathcal{N}_{\widetilde{\mathcal{M}}}(\tau_N)\}\\&\subseteq\mathbb{E}[\Phi^{2r}(\tau^*,\xi)]+\mathcal{N}_{\widetilde{\mathcal{M}}}(\tau^*)+2\epsilon\mathcal{B}\end{aligned}
\end{equation}
where $\lim \sup$ denotes the upper limit of a set-valued mapping. Since the result above is satisfied for any given $r$, we choose $r$ in a decreasing sequence $\{r_n\}\to 0$. Then we can observe that
\begin{equation}
    \lim_{r_n\to0}\mathbb{E}\left[\Phi^{2r_n}(\tau^*,\xi)\right]=\mathbb{E}\left[\lim_{r_n\to0}\Phi^{2r_n}(\tau^*,\xi)\right]=\mathbb{E}\left[\Phi(\tau^*,\xi)\right]
\end{equation}
Consequently, the following equations hold.
\begin{equation}
    \begin{aligned}
    \mathrm{0}&\in \lim_{r_n\to0}\mathbb{E}\left[\Phi^{2r_n}(\tau^*,\xi)\right]+\mathcal{N}_{\widetilde{\mathcal{M}}}(\tau^*)+2\epsilon\mathcal{B}\\
    &=\mathbb{E}\left[\lim_{r_n\to0}\Phi^{2r_n}(\tau^*,\xi)\right]+\mathcal{N}_{\widetilde{\mathcal{M}}}(\tau^*)+2\epsilon\mathcal{B}\\ 
    &=\mathbb{E}\left[\Phi(\tau^*,\xi)\right]+\mathcal{N}_{\widetilde{\mathcal{M}}}(\tau^*)+2\epsilon\mathcal{B}.\end{aligned}
\end{equation}
Since $\epsilon$ is arbitrary, we obtain that $0\in\mathbb{E}[\Phi(\tau,\xi)]+\mathcal{N}(\mathcal{A})$ almost surely. That is to say $\tau^*$ satisfies the weak generalized KKT condition almost surely.

\section{Proof of Theorem 8}
Recall the definition of $\Phi(\tau,\xi)$ as below.
$$
\Phi(\tau,\xi):=\mathrm{conv}\left\{\bigcup_{\mu\in Q(\tau,\xi)}\partial^c\widetilde{\mathcal{L}}(\tau,\xi,\mu)\right\}
$$
Based on Theorem \ref{outer-loop problem bounded by integrable function}, the Lipschitz continuity of $\widetilde{F}$ and $\widetilde{G}$ when fixing $\mathbf{w}$ implies that $\|\partial^c \widetilde{F}\|$ and $\|\partial^c \widetilde{G}\|$ are bounded by a constant. Additionally, $Q(\tau,\xi)$ are bounded by $[\mu_{\min}, \mu_{\max}]$, which is aligned with the algorithm \ref{outer-loop algorithm}. 
Summarizing all the continuity properties of the elements in $\widetilde{\mathcal{L}}$, we can conclude that $\|\Phi(\tau,\xi)\|\leq \kappa_{\Phi}$, which is obviously a constant.

As stated in Assumption \ref{rate: piecewise holder continuity}, $\Phi(\tau,\xi)$ is piecewise Holder continuous with the integrable function $\widetilde{\kappa}(\xi)$, whose moment generating function is finite valued near the origin. Since $\kappa_{\Phi}$ is a constant, the moment generating function of $\widetilde{\kappa}(\xi)+\kappa_{\Phi}$ is also finite valued near 0.

Due to the projection step in Algorithm \ref{outer-loop algorithm}, the SAA solution $\tau_N$ is always bounded in $\widetilde{\mathcal{M}}$. 
Based on Assumption \ref{rate: relate point to set}, the distance between can $\tau_N$ and $\Phi^*$ can be bounded by distance between $\Phi_N(\tau)$ and $\mathbb{E}[\Phi(\tau,\xi)]$. We next analyze the rate of $\Phi_N(\tau)$ converging to $\mathbb{E}[\Phi(\tau,\xi)]$.

According to Definition 4.1 in \cite{convergence-rate}, the interior $\mathbb{S}_j$ of domains $\mathbb{T}_j$ of each piece of $\Phi(\tau,\xi)$ are disjoint, to compute the expectation of $\Phi(\tau,\xi)$, we redefine $\Phi$ by combining the components of $\Phi$ such that each piece has full measure in $\widetilde{\Xi}$.

Define two set-valued mappings $T_j(\tau)=\{\xi{:}(\tau,\xi)\in\mathbb{T}_j\}$ and $S_j(\tau)=\{\xi{:}(\tau,\xi)\in\mathbb{S}_j\}$.For each $\tau\in\mathrm{dom}(\Phi)$ we can obtain the index set $J=\{j{:}S_j(\tau)\neq\varnothing\}$ of $\tau$, which collects the piece that $\tau$ is well defined.  We next consider $\tau$ with the following properties: $\bigcup_{j\in J}T_j(\tau)$ has full measure in $\widetilde{\Xi}$ and for distinct $j,k\in J,T_j(\tau)\cap T_k(\tau)$ has measure zero. All $\tau$ which satisfies these properties are collected in the set $\Gamma^J$. Based on the recombination, we can now redefine the piece of $\Phi$ as below.
$$
\Phi^J(\tau,\xi):=\begin{cases}\Phi_j(\tau,\xi)&\mathrm{if~}\tau\in \Gamma_J,j\in J,\mathrm{~and~}\xi\in T_j(\tau),\\\varnothing&\mathrm{if~}\tau\not\in \Gamma_J.\end{cases}
$$
Therefore $\Phi^J(\tau,\cdot)$ is continuous almost everywhere.
Define the set of all possible index sets as $\mathcal{J}:=\{J\subset\{1,\ldots,\hat{j}\}{:}\operatorname{int}\Gamma _J\neq\varnothing\}$.  Since on each index set $J$, the expectation $\mathbb{E}[\Phi^J(\cdot,\xi)]$ is continuous on $\bar{\Gamma}_J:=\operatorname{cl}\operatorname{int}\Gamma_J$. This domain of $\tau$ may overlap. If $\tau\in\bigcap_{J\in\bar{\mathcal{J}}}\bar{\Gamma}_J$, where $\emptyset\neq\bar{\mathcal{J}}\subset\mathcal{J}$. The mapping $\mathbb{E}[\bigcup_{J\in\bar{\mathcal{J}}}\Phi^J(\tau,\xi)]$ is also continuous on $\bigcap_{J\in\bar{\mathcal{J}}}\bar{\Gamma}_J$.
And the original mapping can be replaced by the union of these new mappings as $\Phi(\tau,\xi)=\bigcup_{J\in\bar{\mathcal{J}}}\Phi^J(\bar{x},\xi)$ almost everywhere. Now we can apply Theorem 3.2 in \cite{convergence-rate} to the mapping $\tau\mapsto\mathbb{E}[\bigcup_{J\in\bar{\mathcal{J}}}\Phi^J(\tau,\xi)]$, which implies that for $\epsilon>0$, there exist positive constants $c^{\bar{\mathcal{J}}}({\epsilon})$ and $\beta^{\bar{\mathcal{J}}}({\epsilon})$ such that,
$$
\begin{aligned}
    &\mathrm{Prob}\left\{\sup_{\tau\in\bigcap_{J\in\bar{\mathcal{J}}}\bar{\Gamma}^{J}}\mathbb{D}{\left(\bigcup_{J\in\bar{\mathcal{J}}}\Phi_{N}^{J}(\tau),\mathbb{E}{\left[\bigcup_{J\in\bar{\mathcal{J}}}\Phi^{J}(\tau,\xi)\right]}\right)\geq\epsilon}\right\} \\
    &\leq c^{\bar{\mathcal{J}}}(\epsilon)e^{-\beta^{\bar{\mathcal{J}}}({\epsilon})N}.
\end{aligned}
$$
Therefore, consider all $\tau$ in $\mathcal{M}$, we can obtain that,
$$
\begin{aligned}
    &\sup_{\tau\in \widetilde{\mathcal{M}}}\mathbb{D}(\Phi_N(\tau),\mathbb{E}[\bigcup_{J\in\bar{\mathcal{J}}}\Phi^J(\tau,\xi)])\\
    \leq &\max_{\mathrm{essential}\bar{\mathcal{J}}}\sup_{x\in\bigcap_{J\in\bar{\mathcal{J}}}\bar{\Gamma}^J}\mathbb{D}{\left(\bigcup_{J\in\bar{\mathcal{J}}}\Phi_N^J(\tau),\mathbb{E}{\left[\bigcup_{J\in\bar{\mathcal{J}}}\Phi^J(\tau,\xi)\right]}\right)}.
\end{aligned}
$$
In conclusion,
$$
\mathrm{Prob}\left\{\sup_{\tau\in \widetilde{\mathcal{M}}}\mathbb{D}(\Phi_N(\tau),\mathbb{E}[\bigcup_{J\in\bar{\mathcal{J}}}\Phi^J(\tau,\xi)])\geq\epsilon\right\}\leq c^{\bar{\mathcal{J}}^*}(\epsilon)e^{-\beta^{\bar{\mathcal{J}}^*}(\epsilon)N}
$$
Finally, we obtain the following inequality via Assumption \ref{rate: relate point to set}.
$$ \mathrm{Prob}\{d(\tau_N,\Phi^*)\geq\epsilon\}\leq c(\epsilon)e^{-{\beta}(\epsilon)N}.$$

\section{Reformulation of \eqref{MPC} with bounded support}
Suppose $\Xi = \{x\mid Ex\leq e\}$, the optimal control problem (\ref{MPC}) can be reformulated as
\begin{equation}
    \begin{aligned}
 &\inf_{\lambda,s_i,\widetilde{\lambda},q_i,t,\mathbf{M},\mathbf{v},o_{ij},r_{ij}}\quad \lambda\varepsilon(\Lambda) +\frac{1}{N}\sum_{i=1}^N s_i \\
            \text{s.t. } \quad & 
        \left(\begin{aligned}
                &a_j^\top L\begin{bmatrix} x_0 \\ \mathbf{v} \end{bmatrix}+b_j^\top \begin{bmatrix} x_0 \\ \mathbf{v} \end{bmatrix}+a_j^\top\left(L_{22}\mathbf{M}+H\right)\hat{\mathbf{w}}^i + \\ &b_j^\top\begin{bmatrix} 0 \\ \mathbf{M} \end{bmatrix}\hat{\mathbf{w}}^i + c_j+o_{ij}^\top e -o_{ij}^\top E \hat{\mathbf{w}}^i 
            \end{aligned}\right) \leq s_i\\
            & \left\|E^\top o_{ij}-\begin{bmatrix} 0 & \mathbf{M}^\top \end{bmatrix} b_j-\left(L_{22}\mathbf{M}+H\right)^\top a_j\right\|_{ \Lambda^{-1} }  \leq \lambda \\
            &\widetilde{\lambda}\varepsilon(\Lambda) + \frac{1}{N}\sum_{i=1}^N q_i\leq t\alpha \\
        & \left(\begin{aligned}
            &F_{x,k}^\top L\begin{bmatrix} x_0 \\ \mathbf{v} \end{bmatrix} +F_{u,k}^\top\begin{bmatrix}x_0\\\mathbf{v}\end{bmatrix}+F_{x,k}^\top\left(L_{22}\mathbf{M}+H\right)\hat{\mathbf{w}}^i \\ &+F_{u,k}^\top\begin{bmatrix}0\\\mathbf{M}\end{bmatrix}\hat{\mathbf{w}}^i + f+ r_{ij}^\top e -r_{ij}^\top E \hat{\mathbf{w}}^i
        \end{aligned}\right)_{+} \leq q_i \\
        & \left\|E^\top r_{ij}-\begin{bmatrix}0 &\mathbf{M}^\top\end{bmatrix} F_{u,k}-\left(L_{22}\mathbf{M}+H\right)^\top F_{x,k}\right\|_{\Lambda^{-1}}\leq\widetilde{\lambda} \\
        & \forall 1\leq i\leq N, 1\leq j\leq n_j, 1\leq k\leq T\\
        & \{\lambda,s_i,\widetilde{\lambda},q_i,t,\mathbf{M},\mathbf{v}, o_{ij},r_{ij}\}\in \mathcal{M}
    \end{aligned}
    \label{convex reformulation under bounded support}
\end{equation}
\end{document}